\DeclareFontFamily{U}{tipa}{}
\DeclareFontShape{U}{tipa}{m}{n}{<->tipa10}{}
\newcommand{\arc@char}{{\usefont{U}{tipa}{m}{n}\symbol{62}}}%
\newcommand{\arc}[1]{\mathpalette\arc@arc{#1}}
\newcommand{\arc@arc}[2]{%
  \sbox0{$\m@th#1#2$}%
  \vbox{
    \hbox{\resizebox{\wd0}{\height}{\arc@char}}
    \nointerlineskip
    \box0
  }%
}
\def\red#1{{\color{red}{#1}}}
\def\blue#1{{\color{blue}{#1}}}
\def\green#1{{\color{green}{#1}}}
\def\({\left(}
\def\){\right)}
\numberwithin{equation}{subsection}
\let\oldsubsection\subsection
\renewcommand{\subsection}{
  \renewcommand{\theequation}{\thesubsection.\arabic{equation}}
  \oldsubsection}
\def\notsim{\not\sim}
\def\lm{\lambda}
\def\Lm{\Lambda}
\def\C{\mathscr C}
\def\H{\mathscr H}
\def\U{\mathscr U}
\def\D{\mathscr D}
\def\T{\mathscr T}
\def\P{\mathbf P}
\def\Z{\mathbb Z}
\def\R{\mathbb R}
\def\Seq{\mathbf{Expr}}
\def\pr{\mathop{\rm pr}\nolimits}
\def\ito{\stackrel\sim\to}
\def\p{{}^p\!}
\def\<{\langle}
\def\>{\rangle}
\def\sectsign{\mathhexbox278}
\renewcommand\emptyset{\varnothing}
\renewcommand\phi{\varphi}
\def\u{\underline}
\def\im{\mathop{\rm im}}
\def\dist{\mathop{\rm dist}}
\def\Gr{\mathop{\rm Gr}\nolimits}
\def\Cyc{\mathop{\rm Cyc}\nolimits}
\def\codim{\mathop{\rm codim}\nolimits}
\def\ord{\mathop{\rm ord}\nolimits}
\def\id{\mathrm{id}}
\renewcommand\epsilon{\varepsilon}
\def\suchthat{\mathbin{\rm |}}
\def\FDR{\mathbf{FDR}}
\def\Sub{\bm{\mathfrak{S}}}
\def\SubExpr{\mathop{\mathbf{Sub}}\nolimits}
\def\SubEx{\mathop{\mathbf{Sub}}\nolimits}
\def\Expr{\mathop{\mathbf{Expr}}\nolimits}
\def\a{A}
\def\b{B}
\def\c{\mathfrak{c}}
\def\E{\mathscr{E}}
\def\L{\mathscr{L}}
\def\op{\mathrm{op}}
\def\Tr{\mathrm{Tr}}
\def\Sq{\mathrm{Sq}}
\def\ddelta{\gamma}
\def\ggamma{\delta}
\newtheorem{theorem}{Theorem}[subsection]
\newtheorem{Theorem}{Theorem}
\newtheorem{proposition}[theorem]{Proposition}
\newtheorem{lemma}[theorem]{Lemma}
\newtheorem{corollary}[theorem]{Corollary}
\newtheorem{remark}[theorem]{Remark}
\newtheorem{definition}[theorem]{Defintition}
\newtheorem{example}[theorem]{Example}
\def\l{\ell}
\def\f{\mathbf{f}}
\def\v{\mathbf v}
\def\le{\leqslant}
\def\ge{\geqslant}
\renewcommand{\labelenumi}{{\rm\theenumi}}
\renewcommand{\theenumi}{{\rm(\arabic{enumi})}}
\def\={\equiv}
\def\BS{\mathrm{BS}}
\def\GL{\mathrm{GL}}
\def\p{\pi}
\def\r{\mathbf{r}}
\def\t{\mathbf{r}}
\title{Cycles in subexpression graphs}
\author{Vladimir Shchigolev}
\address{Financial University under the Government of the Russian Federation, Moscow, Russia}
\email{shchigolev\_vladimir@yahoo.com}
\subjclass[2020]{20F55, 
51F15, 
05C45, 
57M15
}
\begin{document}

%


\begin{abstract}
Let $\Sub(\u{s},w)$ be the graph whose vertices are all subexpressions with target $w$
of a fixed expression $\u{s}$ in generators of a Coxeter group
and edges are the pairs of subexpressions with Hamming distance 2.
We prove that $\Sub(\u{s},w)$ is connected and
its cycle space
is spanned by cycles of lengths 
$d+2$, where $d$ ranges over all positive divisors of
all finite orders of products of at most two entries of $\u{s}$.


\medskip
\noindent \textbf{Keywords:} Coxeter group, geometric representation, subexpressions, cycle space.
\end{abstract}

\maketitle

\medskip
\medskip

\section{Introduction}

Let $(W,S)$ be a Coxeter system.
A finite sequence $\u{s}=(s_1,\ldots,s_n)$ with entries in $S$ is called an {\it expression}.
A {\it subexpression} of $\u{s}$ {\it with target} $w$ is a sequence $\u{\epsilon}=(\epsilon_1,\ldots,\epsilon_n)$
with entries $0$ or $1$ such that 
$s_1^{\epsilon_1}s_2^{\epsilon_2}\ldots s_n^{\epsilon_n}=w$.
Subexpressions emerge naturally, in the works of many authors on combinatorics of Coxeter groups, Kazhdan-Lusztig polynomials and
Soergel bimodules~\cite{Deodhar2},~\cite{EW},~\cite{Kumar}.

We will consider the set of subexpressions of $\u{s}$ with target $w$ as the set of vertices of an
undirected graph $\Sub(\u{s},w)$, called a {\it subexpression graph}, whose edges are 
pairs of vertices with Hamming distance (which is the number of positions at which the corresponding entries are different) two. Note that two is the minimal possible distance
between different subexpressions with the same target.
Below is an example\footnote{Here $W$ is the symmetric groups $S_4$, $\u{s}=(x_1,x_2,x_1,x_2,x_3,x_2,x_1,x_3)$,
where $x_i$ is the transposition $(i,i+1)$, and
$w=(1,2,3)$.
The three-dimensional visualization is due to Maple's procedure\linebreak {\tt DrawGraph} with an additional parameter {\tt layout=spring}.
}
of 
$\Sub(\u{s},w)$:

\medskip

\def\aa{1.7pt}
\def\bb{0.7pt}
\def\gr{70}
\def\cc{0.17}

\begin{center}
\scalebox{0.3}{
\begin{tikzpicture}[baseline=-63pt]
\draw[gray!\gr,line width=\aa] (0., .8682408865)--(0., -5.040605632); 
\draw[line width=\aa] (4.755282582, .2683011886)--(2.938926262, -.7024216324)--(-2.938926262, -.7024216324)--(-4.755282582, .2683011886)--(0., .8682408865)--(4.755282582, .2683011886);
\draw[fill] (0., .8682408865) circle(\cc);
\draw[fill] (2.938926262, -.7024216324) circle(\cc);
\draw[fill] (-2.938926262, -.7024216324) circle(\cc);
\draw[fill] (-4.755282582, .2683011886) circle(\cc);
\draw[fill] (4.755282582, .2683011886) circle(\cc);

\draw[line width=\aa]  (4.755282582, -2.686122070)--(2.938926262, -3.656844891)--(-2.938926262, -3.656844891)--(-4.755282582, -2.686122070);
\draw[gray!\gr,line width=\bb]  (-4.755282582, -2.686122070)--(0., -2.086182372)--(4.755282582, -2.686122070);

\draw[fill] (4.755282582, -2.686122070) circle(\cc);
\draw[fill] (2.938926262, -3.656844891) circle(\cc);
\draw[fill] (-2.938926262, -3.656844891) circle(\cc);
\draw[fill] (-4.755282582, -2.686122070) circle(\cc);
\draw[gray!\gr,fill] (0., -2.086182372) circle(\cc);

\draw[line width=\aa]  (4.755282582, -5.640545330)--(2.938926262, -6.611268151)--(-2.938926262, -6.611268151)--(-4.755282582, -5.640545330);
\draw[gray!\gr,line width=\bb]  (-4.755282582, -5.640545330)--(0., -5.040605632)--(4.755282582, -5.640545330);

\draw[fill] (4.755282582, -5.640545330) circle(\cc);
\draw[fill] (2.938926262, -6.611268151) circle(\cc);
\draw[fill] (-2.938926262, -6.611268151) circle(\cc);
\draw[fill] (-4.755282582, -5.640545330) circle(\cc);
\draw[gray!\gr,fill] (0., -5.040605632) circle(\cc);

\draw[line width=\aa] (4.755282582, .2683011886)--(2.938926262, -.7024216324)--(2.938926262, -6.611268151)--(4.755282582, -5.640545330)--(4.755282582, .2683011886);
\draw[line width=\aa] (2.938926262, -.7024216324)--(-2.938926262, -.7024216324)--(-2.938926262, -6.611268151)--(2.938926262, -6.611268151)--(2.938926262, -.7024216324);
\draw[line width=\aa] (-2.938926262, -.7024216324)--(-4.755282582, .2683011886)--(-4.755282582, -5.640545330)--(-2.938926262, -6.611268151)--(-2.938926262, -.7024216324);

\draw[line width=\aa] (4.755282582, .2683011886)--(2.938926262, -3.656844891);
\draw[line width=\aa] (4.755282582, -2.686122070)--(2.938926262, -.7024216324);

\draw[line width=\aa] (-2.938926262, -.7024216324)--(-4.755282582, -2.686122070);
\draw[line width=\aa] (-2.938926262, -3.656844891)--(-4.755282582, .2683011886);

\end{tikzpicture}}
\end{center}


\medskip

\noindent
Looking at this and similar examples, we can conjecture that any graph $\Sub(\u{s},w)$ is connected
and all its cycles are sums (modulo 2) of ``short'' (see, for example, Table~\ref{table:1}) cycles.
So the main results of the paper are as follows:

\begin{Theorem}\label{theorem:connected}
Any graph $\Sub(\u{s},w)$ is connected.
\end{Theorem}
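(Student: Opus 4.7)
The plan is to proceed by induction on the length $n$ of $\u{s}$, the base $n\le 1$ being trivial. Let $\u{\epsilon},\u{\epsilon}'\in\Sub(\u{s},w)$. If $\epsilon_n=\epsilon'_n$, the truncations $(\epsilon_1,\ldots,\epsilon_{n-1})$ and $(\epsilon'_1,\ldots,\epsilon'_{n-1})$ both belong to $\Sub((s_1,\ldots,s_{n-1}),ws_n^{\epsilon_n})$, which is connected by the induction hypothesis; appending the common last bit to every vertex of a connecting path produces a path in $\Sub(\u{s},w)$.

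The interesting case is $\epsilon_n\ne\epsilon'_n$. Partition $\Sub(\u{s},w)=A\sqcup B$ according to the last coordinate. Truncation identifies $A$ with $\Sub((s_1,\ldots,s_{n-1}),w)$ and $B$ with $\Sub((s_1,\ldots,s_{n-1}),ws_n)$, both connected by the induction hypothesis, so it suffices to exhibit a single edge between $A$ and $B$. Writing $w_k^{(\eta)}:=s_1^{\eta_1}\cdots s_k^{\eta_k}$ and $r_k^{(\eta)}:=w_k^{(\eta)}s_k(w_k^{(\eta)})^{-1}$, a direct computation shows that an edge between $A$ and $B$ is produced from any subexpression $\u{\eta}$ belonging to one of these two smaller graphs, together with an index $j<n$ satisfying $r_j^{(\eta)}=t^*$, where $t^*:=ws_nw^{-1}$ is the reflection attached to position $n$ in every element of $\Sub(\u{s},w)$.

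The \textbf{main obstacle} is the existence of such $\u{\eta}$ and $j$. I plan to address it by a parity argument on the Cayley-graph walk $e=w_0^{(\eta)},w_1^{(\eta)},\ldots,w_{n-1}^{(\eta)}$: a step at position $k$ with $\eta_k=1$ crosses the hyperplane $H_{t^*}$ fixed by $t^*$ precisely when $r_k^{(\eta)}=t^*$, and the parity of the total number of such crossings is determined by the sign change of $v^{-1}(\alpha_{t^*})$ from $v=e$ to $v=w_{n-1}^{(\eta)}$. Taking $\alpha_{t^*}$ to be the positive root of $t^*$, one checks that this parity equals $1$ for walks in the $B$-truncation (endpoint $ws_n$) precisely when $w(\alpha_{s_n})>0$, and equals $1$ for walks in the $A$-truncation (endpoint $w$) precisely when $w(\alpha_{s_n})<0$. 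Since these sign conditions are exhaustive, in either situation one of the two smaller graphs supplies a walk with odd (hence at least one) crossing of $H_{t^*}$; any such $\u{\eta}$ provides the required $j<n$, which delivers the needed edge between $A$ and $B$ and closes the induction.
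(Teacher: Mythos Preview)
Your argument is correct and complete: the induction on length, the truncation, and the parity/wall-crossing count all go through as you describe. In particular, since both $A$ and $B$ are nonempty in the case $\epsilon_n\ne\epsilon'_n$, your dichotomy on the sign of $w(\alpha_{s_n})$ always produces a subexpression $\u{\eta}$ whose associated walk crosses $H_{t^*}$ an odd number of times, giving the bridging edge.

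The paper takes a different route. It introduces a total order $<$ on $\SubEx(\u{s},w)$ (Section~\ref{Orders}) and proves (Corollary~\ref{corolary:00}) that every non-minimal vertex $\u{\gamma}$ admits an edge $\{\u{\gamma},\f_{i,j}\u{\gamma}\}$ with $\f_{i,j}\u{\gamma}<\u{\gamma}$; connectedness then follows by descent to the minimum. The underlying combinatorial fact is the same as yours --- if $\u{\gamma}^{\to j}>0$ then some earlier $\u{\gamma}^{\to k}$ equals $-\u{\gamma}^{\to j}$ (Corollary~\ref{corolary:0}), which is exactly your crossing-parity observation specialised to the last position where two subexpressions differ. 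The difference is structural: you induct on $|\u{s}|$ and look for a single bridging edge, while the paper works globally and descends along the order. Your approach is self-contained and requires no auxiliary order; the paper's approach is not shorter, but the order $<$ it sets up is reused heavily in the proof of Theorem~\ref{theorem:main} (reduction to the maximal vertex of a cycle), so for the paper's purposes the investment pays off later.
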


\begin{Theorem}\label{theorem:main}
The cycle space of
$\Sub(\u{s},w)$
is spanned by cycles of lengths 
$d+2$, where $d$ ranges over all positive divisors of
all finite orders of products of at most two entries of $\u{s}$.
\end{Theorem}


\noindent
Considering separately the case of type $A_1$, we get the following table:

\medskip

\begin{center}
\begin{tabular}{|l| c| c| c|c|c|c|c|}
 \hline
 root system & $A_1$ & $A_n$, $n\ge2$ & $B_2,C_2$& $B_n,C_n$, $n\ge3$ & $D_n,E_6,E_7,E_8$ &$F_4$ & $G_2$\\ [0.5ex]
 \hline
 cycle lengths& 3 & 3,4,5 & 3,4,6 &3,4,5,6 & 3,4,5& 3,4,5,6 & 3,4,5,8 \\
 \hline
\end{tabular}

\smallskip
\smallskip

{\small Table 1: Lengths of cycles in the crystallographic case}\label{table:1}
\end{center}

\smallskip

There is a certain kind of parallel between the graphs $\Sub(\u{s},w)$ and the
{\it reduced expression graphs} \cite[Section 4.2]{EW} whose vertices are reduced expressions of a given element
of $W$ and the edges are braid moves. Theorem~\ref{theorem:connected} (proved in Section~\ref{Galleries}) is thus analogous to Matsumoto's theorem~\cite{M}
and Theorem~\ref{theorem:main} is analogous to Ronan's theorem~\cite[Theorem 2.17]{Ronan} on cycles, which
were used by Elias and Williamson in~\cite{EW} to define their diagramatic categories.

The graph $\Sub(\u{s},w)$ also allows the following geometrical interpretation. Let $\Sigma=\Sigma(W,S)$
be the Coxeter complex. This is a simplicial complex endowed with a left action of $W$.
We will denote by $\Sigma_k$ the union of simplices of codimension $k$.
An {\it alcove walk}~\cite[Section~3.1]{Ram} is a continuous path $g:[a,b]\to\Sigma_0\setminus\Sigma_2$
such that $g(a),g(b)\notin\Sigma_1$ and $g^{-1}(\Sigma_1)$ is finite.
This path uniquely determines an alternating sequence of chambers and panels $\u{\gamma}$
called a {\it gallery}~\cite{CC},~\cite{G},~\cite{MST}. The {\it type} of $\u{\gamma}$ is the sequence $\u{s}$ of
the types of its panels and the {\it signature} of $\u{\gamma}$ is a subexpression $\u{\epsilon}\subset\u{s}$
such that $\epsilon_i=0$ if $\u{\gamma}$ has a fold at the $i$th panel and $\epsilon_i=1$ if it has a crossing.
Thus the set of all subexpressions of $\u{s}$ corresponds bijectively
to the set of all galleries of type $\u{s}$ beginning at a fixed chamber~\cite[Lemma 4.8]{GSch}.

A gallery $\u{\gamma}$ can be folded at (starting from) the $i$th panel~\cite[Section 3.3]{MST}
and the resulting gallery $\f_i\u{\gamma}$ has the same type as $\u{\gamma}$,
whereas its signature is different from the signature of $\u{\gamma}$ at the $i$th place.
If there is a different index $j$ such that the $i$th and $j$th panels of $\u{\gamma}$ belong to the same wall,
then (and only then) the galleries $\u{\gamma}$ and $\f_{i,j}\u{\gamma}=\f_i\f_j\u{\gamma}$ begin and end at the same chambers
(Section~\ref{Expressions_and_subexpressions}).

We can identify the vertices of $\Sub(\u{s},w)$ with all galleries $\u{\gamma}$
of type $\u{s}$ beginning at the fundamental chamber $C$ and ending at $wC$.
Theorem~\ref{theorem:connected} posits that whenever we are given two such galleries
$\u{\gamma}$ and $\u{\gamma}'$, there exists a sequence $\u{\gamma}=\u{\gamma_0},\u{\gamma_1},\ldots,\u{\gamma_n}=\u{\gamma}'$,
called a {\it double folding sequence}, such that $\u{\gamma_k}=\f_{j_k,l_k}\u{\gamma_{k-1}}$ for any $k=1,\ldots,n$.
Note that the indices $j_k$ and $l_k$ must be chosen so that the {\it double folding operators} $\f_{j_k,l_k}$
be applicable. Theorem~\ref{theorem:main} in its turn posits that any {\it cyclic} double folding sequence
(that is, such that $\u{\gamma}=\u{\gamma}'$) is a sum of cyclic double folding sequences of predicted lengths.
Let us consider the following example for the affine group of type $\tilde A_2$
with Coxeter system $\{\red{a},\green{b},\blue{c}\}$:

{

\def\L{0.5}
\def\st{0.6pt}

\def\sq{0.8660254040}
\def\p{(0,0)}
\def\pr{(\L,0)}
\def\prr{(2*\L,0)}
\def\prrr{(3*\L,0)}
\def\prrrr{(4*\L,0)}
\def\prrrrr{(5*\L,0)}
\def\prrrrrr{(6*\L,0)}

\def\plu{(0,\sq*\L)}
\def\pu{(\L/2,\sq*\L)}
\def\pru{(\L/2+\L,\sq*\L)}
\def\prru{(\L/2+2*\L,\sq*\L)}
\def\prrru{(\L/2+3*\L,\sq*\L)}
\def\prrrru{(\L/2+4*\L,\sq*\L)}
\def\prrrrru{(\L/2+5*\L,\sq*\L)}
\def\prrrrrru{(6*\L,\sq*\L)}

\def\pluu{(0,2*\sq*\L)}
\def\puu{(\L,2*\sq*\L)}
\def\pruu{(2*\L,2*\sq*\L)}
\def\prruu{(3*\L,2*\sq*\L)}
\def\prrruu{(4*\L,2*\sq*\L)}
\def\prrrruu{(5*\L,2*\sq*\L)}
\def\prrrrruu{(6*\L,2*\sq*\L)}

\def\plluuu{(0,3*\sq*\L)}
\def\pluuu{(\L/2,3*\sq*\L)}
\def\puuu{(3*\L/2,3*\sq*\L)}
\def\pruuu{(3*\L/2+\L,3*\sq*\L)}
\def\prruuu{(3*\L/2+2*\L,3*\sq*\L)}
\def\prrruuu{(3*\L/2+3*\L,3*\sq*\L)}
\def\prrrruuu{(3*\L/2+4*\L,3*\sq*\L)}
\def\prrrrruuu{(3*\L/2+4*\L+\L/2,3*\sq*\L)}

\def\plluuuu{(0,4*\sq*\L)}
\def\pluuuu{(\L,4*\sq*\L)}
\def\puuuu{(2*\L,4*\sq*\L)}
\def\pruuuu{(3*\L,4*\sq*\L)}
\def\prruuuu{(4*\L,4*\sq*\L)}
\def\prrruuuu{(5*\L,4*\sq*\L)}
\def\prrrruuuu{(6*\L,4*\sq*\L)}

\def\pllluuuuu{(0,5*\sq*\L)}
\def\plluuuuu{(\L/2,5*\sq*\L)}
\def\pluuuuu{(3*\L/2,5*\sq*\L)}
\def\puuuuu{(5*\L/2,5*\sq*\L)}
\def\pruuuuu{(3*\L/2+2*\L,5*\sq*\L)}
\def\prruuuuu{(3*\L/2+3*\L,5*\sq*\L)}
\def\prrruuuuu{(3*\L/2+4*\L,5*\sq*\L)}
\def\prrrruuuuu{(6*\L,5*\sq*\L)}

\def\pllluuuuuu{(\L/2-\L/3,5*\sq*\L+2*\sq*\L/3)}
\def\plluuuuuu{(\L+\L/2-\L/3,5*\sq*\L+2*\sq*\L/3)}
\def\plluuuuuU{(\L+\L/2-2*\L/3,5*\sq*\L+2*\sq*\L/3)}
\def\pluuuuuu{(2*\L+\L/2-\L/3,5*\sq*\L+2*\sq*\L/3)}
\def\pluuuuuU{(2*\L+\L/2-2*\L/3,5*\sq*\L+2*\sq*\L/3)}
\def\puuuuuu{(3*\L+\L/2-\L/3,5*\sq*\L+2*\sq*\L/3)}
\def\puuuuuU{(3*\L+\L/2-2*\L/3,5*\sq*\L+2*\sq*\L/3)}
\def\pruuuuuu{(4*\L+\L/2-\L/3,5*\sq*\L+2*\sq*\L/3)}
\def\pruuuuuU{(4*\L+\L/2-2*\L/3,5*\sq*\L+2*\sq*\L/3)}
\def\prruuuuuu{(5*\L+\L/2-\L/3,5*\sq*\L+2*\sq*\L/3)}
\def\prruuuuuU{(5*\L+\L/2-2*\L/3,5*\sq*\L+2*\sq*\L/3)}
\def\prrruuuuuU{(6*\L+\L/2-2*\L/3,5*\sq*\L+2*\sq*\L/3)}

\def\pd{(\L/3,-2/3*\sq*\L)}
\def\pD{(2*\L/3,-2/3*\sq*\L)}
\def\prd{(\L/3+\L,-2/3*\sq*\L)}
\def\prD{(2*\L/3+\L,-2/3*\sq*\L)}
\def\prrd{(\L/3+2*\L,-2/3*\sq*\L)}
\def\prrD{(2*\L/3+2*\L,-2/3*\sq*\L)}
\def\prrrd{(\L/3+3*\L,-2/3*\sq*\L)}
\def\prrrD{(2*\L/3+3*\L,-2/3*\sq*\L)}
\def\prrrrd{(\L/3+4*\L,-2/3*\sq*\L)}
\def\prrrrD{(2*\L/3+4*\L,-2/3*\sq*\L)}
\def\prrrrrd{(\L/3+5*\L,-2/3*\sq*\L)}
\def\prrrrrD{(2*\L/3+5*\L,-2/3*\sq*\L)}

\def\U{

\draw[red] \p--\pr;
\draw[green] \pr--\prr;
\draw[blue] \p--\pu;
\draw[green] \pu--\pr;
\draw[red] \pr--\pru;
\draw[blue] \pu--\pru;

\draw[blue] \pru--\prr;
\draw[blue] \prr--\prrr;
\draw[green] \prr--\prru;
\draw[red] \pru--\prru;
\draw[red] \prru--\prrr;

\draw[green] \pu--\plu;
\draw[red] \prrr--\prrrr;
\draw[green] \prrrr--\prrrrr;
\draw[blue] \prrrrr--\prrrrrr;
\draw[green] \prru--\prrru;
\draw[blue] \prrr--\prrru;
\draw[green] \prrru--\prrrr;
\draw[blue] \prrru--\prrrru;
\draw[red] \prrrr--\prrrru;
\draw[blue] \prrrrr--\prrrru;
\draw[red] \prrrru--\prrrrru;
\draw[green] \prrrrr--\prrrrru;
\draw[red] \prrrrrr--\prrrrru;
\draw[green] \prrrrru--\prrrrrru;

\draw[blue] \pu--\pluu;
\draw[red] \puu--\pru;
\draw[green] \pu--\puu;
\draw[red] \puu--\pluu;
\draw[green] \puu--\pruu;
\draw[blue] \pruu--\prruu;
\draw[red] \prruu--\prrruu;
\draw[green] \prrruu--\prrrruu;
\draw[blue] \prrrruu--\prrrrruu;
\draw[blue] \pruu--\pru;
\draw[green] \pruu--\prru;
\draw[red] \prruu--\prru;
\draw[blue] \prruu--\prrru;
\draw[green] \prrruu--\prrru;
\draw[red] \prrruu--\prrrru;
\draw[blue] \prrrruu--\prrrru;
\draw[green] \prrrruu--\prrrrru;
\draw[red] \prrrrruu--\prrrrru;

\draw[green] \puu--\pluuu;
\draw[blue] \puuu--\pruu;
\draw[red] \puu--\puuu;
\draw[blue] \puuu--\pluuu;
\draw[red] \puuu--\pruuu;
\draw[green] \pruuu--\prruuu;
\draw[blue] \prruuu--\prrruuu;
\draw[red] \prrruuu--\prrrruuu;
\draw[green] \prrrruuu--\prrrrruuu;
\draw[green] \pruuu--\pruu;
\draw[red] \pruuu--\prruu;
\draw[blue] \prruuu--\prruu;
\draw[green] \prruuu--\prrruu;
\draw[red] \prrruuu--\prrruu;
\draw[blue] \prrruuu--\prrrruu;
\draw[green] \prrrruuu--\prrrruu;
\draw[red] \prrrruuu--\prrrrruu;
\draw[blue] \pluu--\pluuu;
\draw[green] \plluuu--\pluuu;

\draw[red] \puuu--\pluuuu;
\draw[green] \puuuu--\pruuu;
\draw[blue] \puuu--\puuuu;
\draw[green] \puuuu--\pluuuu;
\draw[blue] \puuuu--\pruuuu;
\draw[red] \pruuuu--\prruuuu;
\draw[green] \prruuuu--\prrruuuu;
\draw[blue] \prrruuuu--\prrrruuuu;
\draw[red] \pruuuu--\pruuu;
\draw[blue] \pruuuu--\prruuu;
\draw[green] \prruuuu--\prruuu;
\draw[red] \prruuuu--\prrruuu;
\draw[blue] \prrruuuu--\prrruuu;
\draw[green] \prrruuuu--\prrrruuu;
\draw[red] \prrrruuuu--\prrrruuu;
\draw[green] \pluuu--\pluuuu;
\draw[red] \plluuuu--\pluuuu;
\draw[blue] \plluuuu--\pluuu;

\draw[blue] \puuuu--\pluuuuu;
\draw[red] \puuuuu--\pruuuu;
\draw[green] \puuuu--\puuuuu;
\draw[red] \puuuuu--\pluuuuu;
\draw[green] \puuuuu--\pruuuuu;
\draw[blue] \pruuuuu--\prruuuuu;
\draw[red] \prruuuuu--\prrruuuuu;
\draw[green] \prrruuuuu--\prrrruuuuu;
\draw[blue] \pruuuuu--\pruuuu;
\draw[green] \pruuuuu--\prruuuu;
\draw[red] \prruuuuu--\prruuuu;
\draw[blue] \prruuuuu--\prrruuuu;

\draw[green] \prrruuuuu--\prrruuuu;
\draw[red] \prrruuuuu--\prrrruuuu;
\draw[red] \pluuuu--\pluuuuu;
\draw[blue] \plluuuuu--\pluuuuu;
\draw[green] \plluuuuu--\pluuuu;
\draw[blue] \plluuuuu--\plluuuu;
\draw[green] \plluuuuu--\pllluuuuu;

\draw[blue] \plluuuuu--\pllluuuuuu;
\draw[red] \pluuuuu--\plluuuuuu;
\draw[green] \plluuuuu--\plluuuuuU;
\draw[green] \puuuuu--\pluuuuuu;
\draw[blue] \pluuuuu--\pluuuuuU;
\draw[blue] \pruuuuu--\puuuuuu;
\draw[red] \puuuuu--\puuuuuU;
\draw[red] \prruuuuu--\pruuuuuu;
\draw[green] \pruuuuu--\pruuuuuU;
\draw[green] \prrruuuuu--\prruuuuuu;
\draw[blue] \prruuuuu--\prruuuuuU;
\draw[red] \prrruuuuu--\prrruuuuuU;

\draw[blue] \p--\pd;
\draw[green] \pr--\pD;
\draw[red] \pr--\prd;
\draw[blue] \prr--\prD;
\draw[green] \prr--\prrd;
\draw[red] \prrr--\prrD;
\draw[blue] \prrr--\prrrd;
\draw[green] \prrrr--\prrrD;
\draw[red] \prrrr--\prrrrd;
\draw[blue] \prrrrr--\prrrrD;
\draw[green] \prrrrr--\prrrrrd;
\draw[red] \prrrrrr--\prrrrrD;
}

\def\xc{1.9*\L}\def\yc{2.78*\sq*\L}
\def\xd{2.1*\L} \def\yd{2.8*\sq*\L}
\def\xe{2.2*\L} \def\ye{3*\sq*\L}
\def\xf{2.22*\L}\def\yf{3.2*\sq*\L}
\def\xg{2.3*\L} \def\yg{3.4*\sq*\L}
\def\xh{2.45*\L}\def\yh{3.5*\sq*\L}
\def\xi{2.6*\L} \def\yi{4*\sq*\L}
\def\xj{2.8*\L} \def\yj{4.4*\sq*\L}
\def\xk{3.1*\L} \def\yk{4.47*\sq*\L}
\def\xl{3.25*\L} \def\yl{4.5*\sq*\L}
\def\xm{3.4*\L}\def\ym{4.25*\sq*\L}
\def\xn{3.5*\L} \def\yn{4*\sq*\L}
\def\xo{3.5*\L} \def\yo{3.6*\sq*\L}
\def\xp{3.7*\L} \def\yp{3.4*\sq*\L}
\def\xr{3.77*\L}\def\yr{3*\sq*\L}
\def\xs{3.92*\L}\def\ys{2.6*\sq*\L}
\def\xt{3.9*\L} \def\yt{2.2*\sq*\L}
\def\xu{3.85*\L}\def\yu{2*\sq*\L}
\def\xv{3.67*\L}\def\yv{1.6*\sq*\L}
\def\xw{3.4*\L} \def\yw{1.2*\sq*\L}
\def\xz{3.3*\L} \def\yz{1.1*\sq*\L}


\def\xT{2.9*\L} \def\yT{0.7*\sq*\L}
\def\xTT{2.4*\L} \def\yTT{0.3*\sq*\L}

\def\V{
\begin{tikzpicture}[baseline=28pt]
\U
\draw[->,>=stealth,black,line width=\st] plot[smooth] coordinates{
                                                     (\xc,\yc)
                                                     (\xd,\yd)
                                                     (\xe,\ye) (\xf,\yf) (\xg,\yg)
                                                     (\xh,\yh) (\xi,\yi) (\xj,\yj)
                                                     (\xk,\yk) (\xl,\yl) (\xm,\ym) (\xn,\yn)
                                                     (\xo,\yo) (\xp,\yp) (\xr,\yr) (\xs,\ys)
                                                     (\xt,\yt) (\xu,\yu)
                                                     (\xv,\yv) (\xw,\yw) (\xz,\yz)
                                                     (\xT,\yT) (\xTT,\yTT)
                                                     };
\end{tikzpicture}
}

\def\xxa{\xa}\def\yya{\ya}
\def\xxb{\xb}\def\yyb{\yb}
\def\xxc{\xc}\def\yyc{\yc}
\def\xxd{\xd}\def\yyd{\yd}
\def\xxe{\xd}\def\yye{\ye}
\def\xxf{\xf}\def\yyf{6*\sq*\L-\yf}
\def\xxg{\xg}\def\yyg{6*\sq*\L-\yg}
\def\xxh{\xh}\def\yyh{6*\sq*\L-\yh}
\def\xxi{\xi}\def\yyi{6*\sq*\L-\yi}
\def\xxj{\xj}\def\yyj{6*\sq*\L-\yj}
\def\xxk{\xk}\def\yyk{6*\sq*\L-\yk}
\def\xxl{\xl}\def\yyl{6*\sq*\L-\yl}
\def\xxm{\xm}\def\yym{6*\sq*\L-\ym}
\def\xxn{\xn}\def\yyn{6*\sq*\L-\yn}
\def\xxo{\xo}\def\yyo{6*\sq*\L-\yo}
\def\xxp{\xp}\def\yyp{6*\sq*\L-\yp}
\def\xxr{\xr}\def\yyr{6*\sq*\L-\yr}
\def\xxs{\xs}\def\yys{\ys}
\def\xxt{\xt}\def\yyt{\yt}
\def\xxu{\xu}\def\yyu{\yu}
\def\xxv{\xv}\def\yyv{\yv}
\def\xxw{\xw}\def\yyw{\yw}
\def\xxz{\xz}\def\yyz{\yz}

\def\VV{
\begin{tikzpicture}[baseline=28pt]
\U
\draw[black,line width=\st] plot[smooth] coordinates{
                                                     (\xc,\yc)
                                                     (\xxd,\yyd)
                                                     (\xxe,\yye) };

\draw[black,line width=\st] plot[smooth] coordinates{ (\xxe,\yye)
                                                     (\xxf,\yyf) (\xxg,\yyg)
                                                     (\xxh,\yyh) (\xxi,\yyi) (\xxj,\yyj)
                                                     (\xxk,\yyk) (\xxl,\yyl) (\xxm,\yym)
                                                     (\xxn,\yyn) (\xxo,\yyo) (\xxp,\yyp)
                                                     (\xxr,\yyr)
                                                     };
\draw[->,>=stealth,black,line width=\st] plot[smooth] coordinates{ (\xxr,\yyr)
                                                     (\xxs,\yys) (\xxt,\yyt)
                                                     (\xxu,\yyu) (\xxv,\yyv) (\xxw,\yyw) (\xxz,\yyz)
                                                     (\xT,\yT) (\xTT,\yTT)};
\end{tikzpicture}
}

\def\xxxa{\xa}\def\yyya{\ya}
\def\xxxb{\xb}\def\yyyb{\yb}
\def\xxxc{\xc}\def\yyyc{\yc}
\def\xxxd{\xd}\def\yyyd{\yd}
\def\xxxe{\xe}\def\yyye{\ye}
\def\xxxf{\xf}\def\yyyf{\yf}
\def\xxxg{\xg}\def\yyyg{\yg}

\def\xxxh{-\xh*3/4+\xh/4-\sq*\yh+8*3/4*\L}
\def\yyyh{3/4*\yh-\yh/4+4*\sq*\L-\sq*\xh}

\def\xxxi{-\xi*3/4+\xi/4-\sq*\yi+8*3/4*\L}
\def\yyyi{3/4*\yi-\yi/4+4*\sq*\L-\sq*\xi}

\def\xxxj{-\xj*3/4+\xj/4-\sq*\yj+8*3/4*\L}
\def\yyyj{3/4*\yj-\yj/4+4*\sq*\L-\sq*\xj}

\def\xxxk{-\xk*3/4+\xk/4-\sq*\yk+8*3/4*\L}
\def\yyyk{3/4*\yk-\yk/4+4*\sq*\L-\sq*\xk}

\def\xxxl{-\xl*3/4+\xl/4-\sq*\yl+8*3/4*\L}
\def\yyyl{3/4*\yl-\yl/4+4*\sq*\L-\sq*\xl}

\def\xxxm{-\xm*3/4+\xm/4-\sq*\ym+8*3/4*\L}
\def\yyym{3/4*\ym-\ym/4+4*\sq*\L-\sq*\xm}

\def\xxxn{-\xn*3/4+\xn/4-\sq*\yn+8*3/4*\L}
\def\yyyn{3/4*\yn-\yn/4+4*\sq*\L-\sq*\xn}

\def\xxxo{-\xo*3/4+\xo/4-\sq*\yo+8*3/4*\L}
\def\yyyo{3/4*\yo-\yo/4+4*\sq*\L-\sq*\xo}

\def\xxxp{-\xp*3/4+\xp/4-\sq*\yp+8*3/4*\L}
\def\yyyp{3/4*\yp-\yp/4+4*\sq*\L-\sq*\xp}

\def\xxxr{-\xr*3/4+\xr/4-\sq*\yr+8*3/4*\L}
\def\yyyr{3/4*\yr-\yr/4+4*\sq*\L-\sq*\xr}

\def\xxxs{-\xs*3/4+\xs/4-\sq*\ys+8*3/4*\L}
\def\yyys{3/4*\ys-\ys/4+4*\sq*\L-\sq*\xs}

\def\xxxt{-\xt*3/4+\xt/4-\sq*\yt+8*3/4*\L}
\def\yyyt{3/4*\yt-\yt/4+4*\sq*\L-\sq*\xt}

\def\xxxu{-\xu*3/4+\xu/4-\sq*\yu+8*3/4*\L}
\def\yyyu{3/4*\yu-\yu/4+4*\sq*\L-\sq*\xu}

\def\xxxv{-\xv*3/4+\xv/4-\sq*\yv+8*3/4*\L}
\def\yyyv{3/4*\yv-\yv/4+4*\sq*\L-\sq*\xv}

\def\xxxw{-\xw*3/4+\xw/4-\sq*\yw+8*3/4*\L}
\def\yyyw{3/4*\yw-\yw/4+4*\sq*\L-\sq*\xw}

\def\xxxz{\xz}\def\yyyz{\yz}

\def\VVV{
\begin{tikzpicture}[baseline=28pt]
\U
\draw[black,line width=\st] plot[smooth] coordinates{
                                                     (\xc,\yc)
                                                     (\xxxd,\yyyd)
                                                     (\xxxe,\yyye) (\xxxf,\yyyf) (\xxxg,\yyyg)};

\draw[black,line width=\st] plot[smooth] coordinates{(\xxxg,\yyyg)
                                                     (\xxxh,\yyyh) (\xxxi,\yyyi) (\xxxj,\yyyj) (\xxxk,\yyyk) (\xxxl,\yyyl)
                                                     (\xxxm,\yyym) (\xxxn,\yyyn) (\xxxo,\yyyo) (\xxxp,\yyyp)
                                                     (\xxxr,\yyyr) (\xxxs,\yyys) (\xxxt,\yyyt) (\xxxu,\yyyu)
                                                     (\xxxv,\yyyv) (\xxxw,\yyyw)};

\draw[->,>=stealth,black,line width=\st] plot[smooth] coordinates{(\xxxw,\yyyw)
                                                     (\xxxz,\yyyz)
                                                     (\xT,\yT) (\xTT,\yTT)};
\end{tikzpicture}
}

\def\xxxxd{\xxxd}\def\yyyyd{\yyyd}
\def\xxxxe{\xxxe}\def\yyyye{\yyye}
\def\xxxxf{\xxxf}\def\yyyyf{6*\sq*\L-\yyyf}
\def\xxxxg{\xxxg}\def\yyyyg{6*\sq*\L-\yyyg}

\def\xxxxh{\xxxh}\def\yyyyh{6*\sq*\L-3/4*\yh+\yh/4-4*\sq*\L+\sq*\xh}
\def\xxxxi{\xxxi}\def\yyyyi{6*\sq*\L-3/4*\yi+\yi/4-4*\sq*\L+\sq*\xi}
\def\xxxxj{\xxxj}\def\yyyyj{6*\sq*\L-3/4*\yj+\yj/4-4*\sq*\L+\sq*\xj}
\def\xxxxk{\xxxk}\def\yyyyk{6*\sq*\L-3/4*\yk+\yk/4-4*\sq*\L+\sq*\xk}
\def\xxxxl{\xxxl}\def\yyyyl{6*\sq*\L-3/4*\yl+\yl/4-4*\sq*\L+\sq*\xl}

\def\xxxxm{\xxxm}\def\yyyym{\yyym}
\def\xxxxn{\xxxn}\def\yyyyn{\yyyn}
\def\xxxxo{\xxxo}\def\yyyyo{\yyyo}
\def\xxxxp{\xxxp}\def\yyyyp{\yyyp}
\def\xxxxr{\xxxr}\def\yyyyr{\yyyr}
\def\xxxxs{\xxxs}\def\yyyys{\yyys}
\def\xxxxt{\xxxt}\def\yyyyt{\yyyt}
\def\xxxxu{\xxxu}\def\yyyyu{\yyyu}
\def\xxxxv{\xxxv}\def\yyyyv{\yyyv}
\def\xxxxw{\xxxw}\def\yyyyw{\yyyw}
\def\xxxxz{\xxxz}\def\yyyyz{\yyyz}

\def\VVVV{
\begin{tikzpicture}[baseline=28pt]
\U
\draw[black,line width=\st] plot[smooth] coordinates{
(\xc,\yc) (\xxxxd,\yyyyd) (\xxxxe,\yyyye) };

\draw[black,line width=\st] plot[smooth] coordinates{(\xxxxe,\yyyye)
(\xxxxf,\yyyyf) (\xxxxg,\yyyyg)
(\xxxxh,\yyyyh) (\xxxxi,\yyyyi) (\xxxxj,\yyyyj) (\xxxxk,\yyyyk) (\xxxxl,\yyyyl)};

\draw[black,line width=\st] plot[smooth] coordinates{(\xxxxl,\yyyyl)
(\xxxxm,\yyyym) (\xxxxn,\yyyyn)
(\xxxxo,\yyyyo) (\xxxxp,\yyyyp) (\xxxxr,\yyyyr) (\xxxxs,\yyyys) (\xxxxt,\yyyyt) (\xxxxu,\yyyyu)
(\xxxxv,\yyyyv) (\xxxxw,\yyyyw) };

\draw[->,>=stealth,black,line width=\st] plot[smooth] coordinates{(\xxxxw,\yyyyw)
(\xxxxz,\yyyyz)
(\xT,\yT) (\xTT,\yTT) };
\end{tikzpicture}
}

\def\xxxxxc{\xxc}\def\yyyyyc{\yyc}
\def\xxxxxd{\xxd}\def\yyyyyd{\yyd}
\def\xxxxxe{\xxe}\def\yyyyye{\yye}
\def\xxxxxf{\xxf}\def\yyyyyf{\yyf}
\def\xxxxxg{\xxg}\def\yyyyyg{\yyg}
\def\xxxxxh{\xxh}\def\yyyyyh{\yyh}
\def\xxxxxi{\xxi}\def\yyyyyi{\yyi}
\def\xxxxxj{\xxj}\def\yyyyyj{\yyj}
\def\xxxxxk{\xxk}\def\yyyyyk{\yyk}
\def\xxxxxl{\xxl}\def\yyyyyl{\yyl}

\def\xxxxxm{-\xxm/2-6*3/4*\L+\sq*\ym+6*\L}
\def\yyyyym{7*\sq*\L-\ym/2-\sq*\xxm}

\def\xxxxxn{-\xxn/2-6*3/4*\L+\sq*\yn+6*\L}
\def\yyyyyn{7*\sq*\L-\yn/2-\sq*\xxn}

\def\xxxxxo{-\xxo/2-6*3/4*\L+\sq*\yo+6*\L}
\def\yyyyyo{7*\sq*\L-\yo/2-\sq*\xxo}

\def\xxxxxp{-\xxp/2-6*3/4*\L+\sq*\yp+6*\L}
\def\yyyyyp{7*\sq*\L-\yp/2-\sq*\xxp}

\def\xxxxxr{-\xxr/2-6*3/4*\L+\sq*\yr+6*\L}
\def\yyyyyr{7*\sq*\L-\yr/2-\sq*\xxr}

\def\xxxxxs{-\xxs*3/4+\xxs/4-\sq*\yys+8*3/4*\L}
\def\yyyyys{3/4*\yys-\yys/4+4*\sq*\L-\sq*\xxs}

\def\xxxxxt{-\xxt*3/4+\xxt/4-\sq*\yyt+8*3/4*\L}
\def\yyyyyt{3/4*\yyt-\yyt/4+4*\sq*\L-\sq*\xxt}

\def\xxxxxu{-\xxu*3/4+\xxu/4-\sq*\yyu+8*3/4*\L}
\def\yyyyyu{3/4*\yyu-\yyu/4+4*\sq*\L-\sq*\xxu}

\def\xxxxxv{-\xxv*3/4+\xxv/4-\sq*\yyv+8*3/4*\L}
\def\yyyyyv{3/4*\yyv-\yyv/4+4*\sq*\L-\sq*\xxv}

\def\xxxxxw{-\xxw*3/4+\xxw/4-\sq*\yyw+8*3/4*\L}
\def\yyyyyw{3/4*\yyw-\yyw/4+4*\sq*\L-\sq*\xxw}

\def\xxxxxz{\xxz}
\def\yyyyyz{\yyz}

\def\VVVVV{
\begin{tikzpicture}[baseline=28pt]
\U
\draw[black,line width=\st] plot[smooth] coordinates{
(\xc,\yc) (\xxxxxd,\yyyyyd) (\xxxxxe,\yyyyye) };

\draw[black,line width=\st] plot[smooth] coordinates{(\xxxxxe,\yyyyye)
(\xxxxxf,\yyyyyf) (\xxxxxg,\yyyyyg)
(\xxxxxh,\yyyyyh) (\xxxxxi,\yyyyyi) (\xxxxxj,\yyyyyj) (\xxxxxk,\yyyyyk) (\xxxxxl,\yyyyyl)};

\draw[black,line width=\st] plot[smooth] coordinates{
(\xxxxxl,\yyyyyl)
(\xxxxxm,\yyyyym) (\xxxxxn,\yyyyyn) (\xxxxxo,\yyyyyo)
(\xxxxxp,\yyyyyp) (\xxxxxr,\yyyyyr)
(\xxxxxs,\yyyyys) (\xxxxxt,\yyyyyt)
(\xxxxxu,\yyyyyu) (\xxxxxv,\yyyyyv)
(\xxxxxw,\yyyyyw) };

\draw[->,>=stealth,black,line width=\st] plot[smooth] coordinates{(\xxxxxw,\yyyyyw)
(\xxxxxz,\yyyyyz)
(\xT,\yT) (\xTT,\yTT)       };
\end{tikzpicture}
}

\smallskip

\begin{center}
\begin{tikzcd}[column sep=-10pt]
\V\arrow[dash]{rr}{\f_{1,8}}\arrow[dash]{rd}{\f_{2,11}}&&\VV\arrow[dash]{rr}{\f_{5,11}}&&\VVVVV\arrow[dash]{ld}{\f_{2,8}}\\[20pt]
&\VVV\arrow[dash]{rr}{\f_{1,5}}&&\VVVV
\end{tikzcd}
\end{center}
}
\noindent
In this picture, the alcove walk represented by the black line (and thus the corresponding gallery)
is folded so that its endpoints do not move.
The corresponding five galleries have the same type
$\u{s}=(\red{a},\green{b},\blue{c},\red{a},\blue{c},\red{a},\green{b},\blue{c},\green{b},\red{a},\blue{c},\green{b},\red{a})$.
So there is the following cycle in the graph $\Sub(\u{s},\blue{c}\green{b}\blue{c}\red{s}\green{b})$:

\begin{center}
\tiny
\begin{tikzcd}[column sep=-30pt]
(1,1,1,1,1,1,1,1,1,1,1,1,1)\arrow[dash]{rr}\arrow[dash]{rd}&&(0,1,1,1,1,1,1,0,1,1,1,1,1)\arrow[dash]{rr}&&(0,1,1,1,0,1,1,0,1,1,0,1,1)\arrow[dash]{ld}\\
&(1,0,1,1,1,1,1,1,1,1,0,1,1)\arrow[dash]{rr}&&(0,0,1,1,0,1,1,1,1,1,0,1,1)
\end{tikzcd}
\end{center}

\smallskip

The vertices of $\Sub(\u{s},w)$ can also be thought of as the points of the fibre $f^{-1}(wB)$
fixed by the maximal torus $T$, where $f:\BS(\u{s})\to G/B$ is a Bott-Samelson resolution,
$G$ is a reductive algebraic group and $B$ is its Borel subgroup containing $T$.
%
Connecting these points by $T$-curves, we get a geometrical interpretation of the edges of $\Sub(\u{s},w)$
as in~\cite[Example A.6]{BW}, where the graph $\Sub(\u{s},w)$ is a pentagon similar to the one in our example above.
Note that this construction was used in this paper to establish a 2-torsion, that is,
the failure of the decomposition theorem over fields of characteristic 2.
So it would be interesting to investigate if cycles in other graphs $\Sub(\u{s},w)$ contribute to
the emergence of torsion for other Bott-Samelson resolutions

Actually we obtained more than just the lengths of cycles. Theorem~\ref{theorem:4} exactly
describes what these cycles are. First we have triangles and squares (Section~\ref{Triangles_and_squares}).
These are in a sense {\it inessential} cycles whose structure and lengths do not depend on the Coxeter group.
Additionally, there are more complicated cycles.
To describe them, we make use of a categorical language (Section~\ref{Categories}) similar
to that described in~\cite{cat}. It is a useful tool that allows us to replace longer subexpressions
with shorter (and simpler) ones. 
In this part of the paper, we refer to the standard facts of the theory of reflection subgroups
due to Deodhar, Dyer and Bonnaf\'e~\cite{Deodhar1},~\cite{Dyer1},~\cite{Dyer2},~\cite{BD}.
Our approach is also close to that of Vinberg~\cite{Vinberg}.

\def\rred#1{}

\rred{
Moreover, we can also replace the initial ``big'' Coxeter group with a dihedral group
and then consider subexpressions of expressions in just two symbols.
Here we develop some ideas of~\cite[Chapter 4]{BB} about pairs of two reflections or roots in the
language of sequences of vectors, which we call here {\it Fibonacci vectors} (Section~\ref{Fibonacci_vectors}).}

In Section~\ref{Dihedral_cycles}, we describe certain cycles for the dihedral groups that later are mapped to cycles
in the initial graph $\Sub(\u{s},w)$ by morphisms of categories $\Expr_\D(W,S)$ (Section~\ref{Categories}).

Theorem~\ref{theorem:main} is proved in Section~\ref{Generating_cycles}. It follows trivially form Theorem~\ref{theorem:4}
by counting the lengths of cycles (Remark~\ref{rem:3}) and forgetting about the order.
As for Theorem~\ref{theorem:4} itself, its proof proceeds by induction on the maximal vertex $\u{\gamma}$.
Here we have a kind of double induction, as before reducing $\u{\gamma}$, we move edges by a process described
in Definition~\ref{definition:cycb6:2} to a more suitable position (Lemma~\ref{lemma:moving}).

\section{Basics}

\subsection{Sets, maps, sequences, groups, spaces}
We denote by $\Z$ and $\R$ the sets of integers and real numbers respectively.
The cardinality and the power set of a set $X$ are denoted by $|X|$ and $\P(X)$ respectively.
Applying maps to elements, we will often drop brackets. For example, we will write $\phi x$ instead of $\phi(x)$.
For an function $f:X\to Y$ and a subset $Z\subset X$, we denote by $f|_Z$ the restriction of $f$ to $Z$.

The neutral element of any group $G$ is denoted by $1$. For any subset $X\subset G$, let $\<X\>$ be the subgroup
generated by $X$. The order of an element $x\in G$ will be denoted by $\ord(x)$.


For real numbers $a<b$, we denote $[a,b]=\{x\in\R\suchthat a\le x\le b\}$ and $(a,b)=\{x\in\R\suchthat a<x<b\}$.
A continuous map $f:[a,b]\to X$, where $X$ is a topological space, is called a {\it path}.
If $X=\R$, then we consider the {\it graph} of $f$ that is the following subset of the plane:
$$
\Gr(f)=\{(x,y)\in[a,b]\times\R\suchthat f(x)=y\}.
$$
Here the first coordinate $x$ is the {\it abscissa} (horizontal) and the second coordinate $y$ is the {\it ordinate} (vertical).
A function $f:X\to Y$, where $X$ and $Y$ are subsets of $\R$, is called {\it increasing} if
it preserves the natural order on $\R$.

Let $f_1:Z_1\to Y,\ldots f_n:Z_n\to Y$ be maps, where $Z_1,\ldots,Z_n\subset X$,
such that any two maps $f_i$ and $f_j$ agree on the intersection $Z_i\cap Z_j$.
Then we can {\it glue} all these maps together to a single map $f_1\cup\cdots\cup f_n:Z_1\cup\cdots\cup Z_n\to X$.
If all subsets $Z_1,\ldots,Z_n$ are simultaneously open or closed and all maps $f_1,\ldots,f_n$
are continuous, then $f$ is also continuous.

Considering a topological space $X$ and a subset $Y\subset X$, we denote by $\overline Y$ and $Y^\circ$ the {\it closure}
and the {\it interior} of $Y$ respectively.

Let $V$ be a real (over $\R$) vector space. 
We will denote by $\GL(V)$ the group of all invertible linear maps $V\to V$ and by $V^*$
its {\it dual space}, which consists of all linear
maps $f:V\to\R$ with respect to the pointwise multiplication and addition.

For a subset $X\subset V$, we say that a vector $v\in V$ is a {\it positive combination} of elements of $X$
if $v=c_1x_1+\cdots+c_nx_n$, where $n>0$, $c_1,\ldots,c_n$ are positive real numbers and $x_1,\ldots,x_n\in X$.
If $0$ is not representable in this form, then we say that $X$ is {\it positively independent}.

A subspace $L\subset V$ defined by a nonconstant linear equation
is called a {\it hyperplane}. Thus the difference $V\setminus L$ has exactly two connected
components $V^+$ and $V^-$. We say that $L$ {\it separates} any points $A\in V^+$ and $B\in V^-$ and
denote this fact by $A\not\sim_L B$. Note that this condition implies $A,B\notin L$.
The negation of $A\not\sim_L B$ will be denoted by $A\sim_L B$.
This notation will also be applied to nonempty subsets $X,Y\subset V$ disjoint with $L$:
$$
X\sim_L Y\Leftrightarrow\forall x\in X\,\forall y\in Y: x\sim_Ly,
\qquad
X\not\sim_L Y\Leftrightarrow\forall x\in X\,\forall y\in Y: x\not\sim_Ly.
$$
The relation $\sim_L$ is 
symmetric and transitive
on the set of nonempty subsets of $V\setminus L$ disjoint with $L$
and, moreover,
\begin{equation}\label{eq:sep:3}
X\notsim_L Y\;\&\;Y\notsim_L Z\Rightarrow X\sim_LZ.
\end{equation}

\subsection{Graphs} Let $\Gamma=(V,E)$ be a finite undirected graph. In this paper, we will use the following ad-hoc terminology:
a {\it subgraph} of $G$ is a subset $F\subset E$. To think in the more standard terms, the reader
can identify $F$ with the pair $(V,F)$. In particular, we call $F$ a {\it cycle} if
the corresponding pair is a cycle, that is, a connected graph whose vertices have degrees $2$ or $0$.
If $\phi:V\to U$ is a map and $F$ is a subgraph of $\Gamma$, then we define
$$
\phi(F)=\{\{\phi(x),\phi(y)\}\suchthat\{x,y\}\in F\}.
$$

The set of all subgraphs of $\Gamma$ is thus equal to the power set $\P(E)$. It is a vector space over the two-element field under
the following operations:
$$
1\cdot F=F,\qquad 0\cdot F=\emptyset,\qquad F+F'=F\mathop{\triangle}F',
$$
where $\triangle$ denotes the symmetric difference. A subgraphs of $\Gamma$ is called {\it even} if any vertex of $V$
is incident with an even number of edges of $F$. The set of all even subgraphs of $\Gamma$
is a subspace of $\P(E)$, which 
is called the {\it cycle space} of $\Gamma$, see~\cite{Wallis},~\cite{Diestel},~\cite{GY}.
It is well-known that this space is spanned by some cycles~\cite{Veblen}.

\begin{definition}\label{definition:cycb6:2}
Let $e$ and $e'$ be edges of 
a graph having a common endpoint $x$.
We say that $e$ is $x$-moved to $e'$ by a sequence of subgraphs $F_1,\ldots,F_n$ if $e'$
is a unique edge with endpoint $x$ of the sum $e+F_1+\cdots+F_n$.
\end{definition}

\noindent
Clearly, the order in which the subgraphs $F_1,\ldots,F_n$ are enumerated does not play a role.
We leave the proof of the following simple result to the reader.

\begin{proposition}\label{lemma:cycl:3}
Let $x$ be a vertex of a graph $G$.
{\renewcommand{\labelenumi}{{\it(\roman{enumi})}}
\renewcommand{\theenumi}{{\rm(\roman{enumi})}}
\begin{enumerate}
\itemsep=5pt
\item\label{lemma:cycl:3:p:1} Any edge with endpoit $x$ is $x$-moved to itself by the empty sequence of subgraphs.
\item\label{lemma:cycl:3:p:2} If $e$ is $x$-moved to $e'$ by $F_1,\ldots,F_n$, then $e'$ is $x$-moved to $e$ by $F_1,\ldots,F_n$.
\item\label{lemma:cycl:3:p:3} If $e$ is $x$-moved to $e'$ by $F_1,\ldots,F_n$ and $e'$ is $x$-moved to $e''$
by $F'_1,\ldots,F'_{n'}$,
then $e$ is $x$-moved to $e''$ by $F_1,\ldots,F_n,F'_1,\ldots,F'_{n'}$.
\end{enumerate}}
\end{proposition}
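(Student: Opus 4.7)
The plan is to recast the definition in terms of a single $\F_2$-linear functional and then carry out a short algebraic computation in the cycle space. For any spanning subgraph $H$, I will let $E_x(H)$ denote the set of edges of $H$ incident to $x$. Because subgraph addition is symmetric difference of edge sets, and symmetric difference commutes with the restriction to a fixed vertex, the assignment $H\mapsto E_x(H)$ is $\F_2$-linear:
$$
E_x(H_1+H_2)=E_x(H_1)\triangle E_x(H_2).
$$
Identifying each edge with its associated one-edge spanning subgraph, the condition ``$e$ is $x$-moved to $e'$ by $G_1,\ldots,G_n$'' then becomes the clean equality $E_x(e+G_1+\cdots+G_n)=\{e'\}$.

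In this language, part (i) is immediate, since the empty sum reduces to $e$, which is by hypothesis incident to $x$, whence $E_x(e)=\{e\}$. For part (ii), I will set $S=G_1+\cdots+G_n$ and add $E_x(e+e')$ to the hypothesis $E_x(e+S)=\{e'\}$; since both $e$ and $e'$ lie at $x$, the quantity $E_x(e+e')$ is either $\{e,e'\}$ or $\emptyset$ according as $e\ne e'$ or $e=e'$, and in both cases the $\F_2$-sum yields $E_x(e'+S)=\{e\}$. Part (iii) is the analogous telescoping: with $A=G_1+\cdots+G_n$ and $B=G'_1+\cdots+G'_{n'}$, the hypotheses read $E_x(e+A)=\{e'\}$ and $E_x(e'+B)=\{e''\}$, so linearity gives
$$
E_x(e+A+B)=E_x(e+A)\triangle E_x(e'+B)\triangle E_x(e')=\{e'\}\triangle\{e''\}\triangle\{e'\}=\{e''\},
$$
which is the required conclusion.

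There is no serious obstacle; the whole argument is the $\F_2$-linearity of the ``edges at $x$'' map combined with cancellation in a symmetric difference. The only mild point needing attention is the possibility of coincidences among the edges $e,e',e''$, but these are absorbed automatically by computing in $\F_2$. In spirit, the three clauses simply record reflexivity, symmetry and transitivity of the relation ``$x$-moved to'' on the set of edges incident to $x$, and the proof should occupy no more than a few lines in the paper.
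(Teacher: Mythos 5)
Your argument is correct: since subgraph addition is symmetric difference of edge sets, the map $H\mapsto E_x(H)$ is indeed $\F_2$-linear, and your computations for (i)--(iii) (including the degenerate cases $e=e'$ and $e'=e''$, which the symmetric difference handles automatically) establish exactly the conditions $E_x(e+G_1+\cdots+G_n)=\{e'\}$ etc.\ required by Definition~\ref{definition:cycb6:2}. The paper explicitly leaves the proof of Proposition~\ref{lemma:cycl:3} to the reader, so there is no argument to compare against; yours is the natural verification and would serve as a complete substitute.
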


\subsection{Circle}\label{Circle} The circle $S_r^1$ of radius $r>0$ plays a central role in this paper.
For each point $X\in S_r^1$ (resp. subset $X\subset S_r^1$), we denote by $X^\op$
the point (resp. the set of points) diametrically opposite to $X$. Thus for any line $L$ through the center,
$L\cap S_r^1=\{X,X^\op\}$ for some point $X\in S^1_r$.
The reflection $\r_L:S_r^1\to S_r^1$ through $L$ is a bijection of $S_r^1$ of order 2 with
fixed points $X$ and $X^\op$.

For any distinct points $A,B\in S_r^1$, there are exactly two {\it closed arcs} with endpoints $A$ and $B$.
If $A$ and $B$ are not diametrically opposite, then these arcs have different lengths
and we denote by $\arc{AB}$ the shorter of them, which is called the {\it minor arc}.
An {\it open arc} is the interior of a closed arc.
The length of an arc $\Omega$ is denoted by $|\Omega|$.

In various constructions with the circle, it will be convenient to use the {\it separation relation}:
four points $A,B,C,D\in S_r^1$ satisfy the quaternary relation $ABCD$ if and only if
the pair $(A,C)$ separates the pair $(B,D)$. This relation satisfies certain
properties~\cite[Chapter XXV, 203]{Russell} of which we mention the following:
%
%
\begin{equation}\label{IV}
A,B,C,D\text{ are pairwise distinct}\Rightarrow ABCD\text{ or }A\,CDB\text{ or }ADBC,
\end{equation}
\begin{equation}\label{V}
\hspace{-57pt}
ABCD\text{ and }A\,CDE\Rightarrow ABDE.
\end{equation}

\noindent
There are the following obvious relations between arcs, the separation relation and the lines.

\begin{proposition}\label{proposition:cycl:1}
Let $\Omega$ be an arc, $A,C\in\Omega$, $D\in S_r^1\setminus\Omega$ and $B\in S_r^1$ be a point such that $ABCD$.
Then $B\in\Omega$.
\end{proposition}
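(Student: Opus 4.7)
The plan is to translate the combinatorial separation relation $ABCD$ into the decomposition of the circle into two arcs cut out by the pair $\{A,C\}$, and then to show that the arc containing $B$ must lie inside $\Phi$. Since a separation relation presupposes that $A,B,C,D$ are pairwise distinct, in particular $A\ne C$, so $\{A,C\}$ cuts $S_r^1$ into two closed arcs $\Psi_1,\Psi_2$ meeting exactly in $\{A,C\}$. The meaning of $(A,C)$ separating $(B,D)$ is precisely that $B$ and $D$ lie in different relative interiors; relabelling if necessary, I may assume $B\in\Psi_1^\circ$ and $D\in\Psi_2^\circ$.

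Next I would show that one of $\Psi_1,\Psi_2$ is contained in $\Phi$. This uses only that $\Phi$ is a connected arc with $A,C\in\Phi$: the portion of $\Phi$ joining $A$ to $C$ inside $\Phi$ is itself a sub-arc of $\Phi$ with endpoints $A$ and $C$, and since there are only two arcs of $S_r^1$ with those endpoints, this sub-arc coincides with $\Psi_1$ or with $\Psi_2$. Call this one $\Psi$; then $\Psi\subset\Phi$, and the complementary arc contains all of $S_r^1\setminus\Phi$ in its relative interior.

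The conclusion is then automatic. Since $D\in\Psi_2^\circ$ and $D\notin\Phi$, the arc $\Psi$ cannot be $\Psi_2$ (otherwise $D\in\Psi_2^\circ\subset\Phi$ would contradict $D\notin\Phi$); hence $\Psi=\Psi_1$, and therefore $B\in\Psi_1^\circ=\Psi^\circ\subset\Phi$, as required.

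The only step with any real content is identifying which of $\Psi_1,\Psi_2$ sits inside $\Phi$, and this is a direct connectedness argument: a continuous path inside $\Phi$ from $A$ to $C$ stays in a single one of the two arcs cut by $\{A,C\}$. I do not expect this to be a serious obstacle, so the whole proof amounts to unpacking the definitions of arc and of separation.
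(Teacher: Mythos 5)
Your argument is correct: the paper states this proposition without proof (declaring it an obvious consequence of the definitions), and your unpacking — cutting $S_r^1$ along $\{A,C\}$ into the two closed arcs $\Psi_1,\Psi_2$, observing that the sub-arc of $\Phi$ between $A$ and $C$ must be one of them, and ruling out the one whose interior contains $D\notin\Phi$ — is exactly the intended elementary verification. No gaps; the distinctness of $A,B,C,D$ indeed comes for free from the separation relation, so the decomposition step is legitimate.
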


\begin{proposition}\label{proposition:cycl:2}
Let $L$ be a line through the center of $S_r^1$ and $X\in L\cap S_r^1$. 
This line separates points $A,B\in S_r^1$ if and only if 
$AX^\op BX$.
\end{proposition}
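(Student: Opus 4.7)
The plan is to reduce the statement to a direct description of what the cyclic separation relation $XAX^{\op}B$ means geometrically, and then match it against the half-plane definition of separation by a hyperplane.

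First I would dispose of the degenerate cases. If $A\in L$ or $B\in L$, equivalently $A\in\{X,X^\op\}$ or $B\in\{X,X^\op\}$, then $L$ cannot separate $A$ from $B$ because the definition of $\not\sim_L$ requires both points to lie in $E\setminus L$; meanwhile the relation $XAX^\op B$ fails because property \eqref{IV} requires $X,A,X^\op,B$ to be pairwise distinct (and at best one can only get the degenerate trichotomy ruled out). So from here on I may assume $A,B\in S^1_r\setminus\{X,X^\op\}$ and that $A,B,X,X^\op$ are pairwise distinct.

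Next I would use the fact that $L$ passes through the center, so $L\cap S^1_r=\{X,X^\op\}$ and the two open half-planes $E^+,E^-$ determined by $L$ cut $S^1_r\setminus\{X,X^\op\}$ into exactly two open arcs $\Phi^+=E^+\cap S^1_r$ and $\Phi^-=E^-\cap S^1_r$, each a semicircle bounded by $X$ and $X^\op$. By the definition of separation in the previous subsection, $A\not\sim_L B$ holds if and only if one of $A,B$ lies in $\Phi^+$ and the other in $\Phi^-$, i.e.\ if and only if $A$ and $B$ lie on different open arcs bounded by $\{X,X^\op\}$.

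Finally I would match this to the cyclic condition $XAX^\op B$. By definition the quaternary relation $XAX^\op B$ says that the pair $(X,X^\op)$ separates the pair $(A,B)$ on $S^1_r$, which is exactly the geometric statement that $A$ and $B$ sit on opposite arcs bounded by $X$ and $X^\op$. The two conditions therefore coincide, giving the claimed equivalence. The only real obstacle I anticipate is making sure that the bare axiomatic relation $ABCD$, as introduced via properties \eqref{IV}, \eqref{V} and the permutation symmetries, is being used consistently with its intuitive interlacing meaning; if needed, I would verify this interlacing interpretation once using Proposition~\ref{proposition:cycl:1} by taking $\Phi$ to be one of the two closed arcs determined by $X$ and $X^\op$.
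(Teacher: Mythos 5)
Your argument is correct: the paper states Proposition~\ref{proposition:cycl:2} without proof (it is introduced as an ``obvious relation'' between the separation relation and lines), and since the quaternary relation $XAX^{\op}B$ is defined in the paper precisely as ``the pair $(X,X^{\op})$ separates the pair $(A,B)$,'' your identification of both sides with ``$A$ and $B$ lie on different open semicircular arcs bounded by $X$ and $X^{\op}$'' is exactly the intended verification. The handling of the degenerate cases ($A$ or $B$ in $\{X,X^{\op}\}$) is also consistent with the paper's convention that $A\not\sim_L B$ presupposes $A,B\notin L$.
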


A closed arc $\Omega$ and a closed interval $[a,b]\subset\R$ such that $|\Omega|=b-a$
are isomorphic by a distance-preserving diffeomorphism $f:[a,b]\ito\Omega$.
Actually, there are two such diffeomorphisms: the one such that $f(a)=A$ and $f(b)=B$
and the other one such that $f(a)=B$ and $f(b)=A$, where $A$ and $B$ are the endpoints of $\Omega$.
We will also say that $f$ is a {\it smooth path} having {\it natural parametrization}.
In that case, the inverse map $I=f^{-1}:\Omega\ito[a,b]$ is called a {\it natural chart},
while the values of $I$ are called {\it $I$-coordinates}.

More generally, a path $f_1\cup f_2\cup\cdots\cup f_n:[x_1,x_{n+1}]\to S_r^1$, where $f_i:[x_i,x_{i+1}]\to S_r^1$ are
smooth paths having natural parametrization, is also said to have {\it natural parametrization}.
It is smooth outside $\{x_2,\ldots,x_n\}$.

Smooth paths having natural parametrization 
are a special case of monotonic maps\linebreak $f:[a,b]\to S_r^1$ 
that are defined by the following property:
$$
\forall\, t_1,t_2,t_3,t_4\in\R: a\le t_1<t_2<t_3<t_4\le b\Rightarrow f(t_1)f(t_2)f(t_3)f(t_4).
$$
\noindent
We leave the proofs of the following simple results to the reader.

\begin{proposition}\label{proposition:cycl:3}
Let $f:[a,b]\to S_r^1$ be a monotonic map. Then $f$ is injective
and the image of $f$ is contained in exactly one of the closed arcs between $f(a)$ and $f(b)$.
\end{proposition}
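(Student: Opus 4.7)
The plan is to extract both conclusions directly from the definition of monotonicity, treating injectivity first and using it to set up the arc containment.

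\emph{Injectivity.} Suppose for contradiction that $f(s)=f(t)$ with $a\le s<t\le b$. Since $s<t$ in a real interval, I pick $u_1,u_2$ with $s<u_1<u_2<t$. Applying the monotonicity condition to the chain $s<u_1<u_2<t$ yields the separation relation $f(s)\,f(u_1)\,f(u_2)\,f(t)$, i.e.\ the pair $(f(s),f(u_2))$ separates the pair $(f(u_1),f(t))$ on $S^1_r$. But a pair of points can only separate another pair meaningfully when the two pairs are disjoint; in particular $f(s)\ne f(t)$, contradicting our assumption. Hence $f$ is injective.

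\emph{Arc containment.} Set $A=f(a)$ and $B=f(b)$. The case $a=b$ is trivial, so assume $a<b$; then by injectivity $A\ne B$, so $S^1_r$ decomposes into two closed arcs joining $A$ and $B$, meeting only in $\{A,B\}$. For $t\in(a,b)$ injectivity also forces $f(t)\notin\{A,B\}$, so $f(t)$ lies in exactly one of the two open arcs. I claim all such $f(t)$ lie on the same open arc. Suppose otherwise: $s,t\in(a,b)$ with $f(s)$ and $f(t)$ on opposite open arcs between $A$ and $B$; I may take $s<t$. Monotonicity applied to $a<s<t<b$ gives
\[
A\,f(s)\,f(t)\,B,
\]
meaning $(A,f(t))$ separates $(f(s),B)$. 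On the other hand, the assumption that $f(s)$ and $f(t)$ lie on opposite open arcs between $A$ and $B$ is, by the very definition of separation of pairs, the relation $(A,B)$ separates $(f(s),f(t))$, i.e.\
\[
A\,f(s)\,B\,f(t).
\]
The four points $A,f(s),f(t),B$ are pairwise distinct, so property~(IV) asserts exactly one of the three cyclic arrangements $ABCD$, $A\,CDB$, $ADBC$ holds; the two relations just derived correspond to two different such arrangements and cannot coexist. Hence the image of $f$ sits inside the unique closed arc containing (the image of) any one interior value, which is one of the two closed arcs between $A$ and $B$.

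\emph{Expected obstacle.} There is no deep obstacle. The entire argument relies on a careful bookkeeping of cyclic arrangements of four points, and the only subtle point is recognizing that the separation predicate $ABCD$ tacitly requires the four arguments to be distinct (used for injectivity) and that the specific orderings $A\,f(s)\,f(t)\,B$ and $A\,f(s)\,B\,f(t)$ realize two of the three mutually exclusive classes listed in~(IV).
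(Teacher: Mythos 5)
Your argument is correct, and there is nothing in the paper to compare it with: the author explicitly leaves the proofs of Propositions~\ref{proposition:cycl:3} and~\ref{proposition:cycl:4} to the reader, and your proof is exactly the kind of elementary bookkeeping with the separation relation that is intended. Both halves work: injectivity follows because the relation $f(s)f(u_1)f(u_2)f(t)$, by the very meaning of ``$(f(s),f(u_2))$ separates $(f(u_1),f(t))$'', forces the four values (in particular $f(s)$ and $f(t)$) to be distinct; and the arc-containment step correctly reduces to the incompatibility of $A\,f(s)\,f(t)\,B$ with $A\,f(s)\,B\,f(t)$. The only point to tidy up is your appeal to~(\ref{IV}): as stated in the paper it asserts only that \emph{at least} one of the three arrangements holds for four distinct points, not that exactly one does, so the mutual exclusivity you need is not literally what is quoted. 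It is, however, immediate from the definition of separation: if $(A,B)$ separates $(f(s),f(t))$, then starting at $A$ and moving along the arc containing $f(s)$ one reaches $B$ and only afterwards $f(t)$, so $f(s)$ and $B$ lie in the same component of $S_r^1\setminus\{A,f(t)\}$, i.e.\ $A\,f(s)\,f(t)\,B$ fails. Either give this one-line justification or cite the exclusivity as one of the standard properties of the separation relation; with that, the proof is complete.
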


\begin{proposition}\label{proposition:cycl:4}
Let $f:[a,b]\to S_r^1$ be a continuous monotonic map. Then $f$ maps $[a,b]$ bijectively
to a closed arc between $f(a)$ and $f(b)$.
\end{proposition}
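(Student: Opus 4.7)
The plan is to combine Proposition~\ref{proposition:cycl:3} with connectedness of the domain. From Proposition~\ref{proposition:cycl:3}, I get for free that $f$ is injective and that $\im f\subset\Phi$ for exactly one of the two closed arcs $\Phi$ between $f(a)$ and $f(b)$. So the bijectivity claim reduces to showing $\im f\supset\Phi$, i.e.\ that every point of $\Phi$ is attained.

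First I would reduce to an interval statement. The closed arc $\Phi$ carries a distance-preserving diffeomorphism $g\colon [0,|\Phi|]\ito\Phi$ with $g(0)=f(a)$ and $g(|\Phi|)=f(b)$, as discussed in Section~\ref{Circle}. Composing, $h=g^{-1}\circ f\colon[a,b]\to[0,|\Phi|]$ is continuous, so by the standard intermediate value theorem its image is a connected, hence interval, subset of $[0,|\Phi|]$ containing the two endpoints $h(a)=0$ and $h(b)=|\Phi|$. Such a subset must be all of $[0,|\Phi|]$, so $\im h=[0,|\Phi|]$ and thus $\im f=g([0,|\Phi|])=\Phi$.

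Alternatively (essentially the same argument packaged differently), one may note that $[a,b]$ is compact and connected, so $\im f$ is a compact connected subset of $S^1_r$ contained in $\Phi$ and containing the endpoints $f(a),f(b)$ of $\Phi$; since $\Phi$ is homeomorphic to a closed interval, the only such subset is $\Phi$ itself.

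There is really no significant obstacle here: the injectivity and the ``arc selection'' are handed over by Proposition~\ref{proposition:cycl:3}, and the only new ingredient is the intermediate value theorem pulled back through the natural parametrization of $\Phi$. The one minor point to watch is the degenerate case $a=b$ (where $\Phi$ collapses to a single point), which is immediate, and the implicit observation that $f(a)\ne f(b)$ whenever $a<b$, which follows from the injectivity granted by Proposition~\ref{proposition:cycl:3}.
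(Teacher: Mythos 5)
Your proposal is correct: Proposition~\ref{proposition:cycl:3} gives injectivity and the inclusion $\im f\subset\Phi$ for one closed arc $\Phi$, and pulling back through the natural parametrization of $\Phi$ and applying the intermediate value theorem yields surjectivity onto $\Phi$. The paper itself leaves this proof to the reader, and your argument is exactly the intended elementary route, with the minor endpoint remarks ($f(a)\ne f(b)$ for $a<b$) handled appropriately.
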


\begin{lemma}\label{lemma:circ:5}
Let $\Omega$ be an arc, $A\in\Omega$ and $D\in S_r^1\setminus\Omega$.
Let $L$ be a line through the center of $S_r^1$ such that $L\cap\Omega=\{X\}$ for some point $X\in S_r^1$.
For any $Z\in S_r^1$ such that $AZXD$, the line $L$ does not separate $A$ from $Z$.
\end{lemma}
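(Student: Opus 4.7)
The plan is to argue entirely within the separation calculus, using Propositions~\ref{proposition:cycl:1} and~\ref{proposition:cycl:2} together with axioms \eqref{IV} and \eqref{V} of the separation relation. First I would observe that Proposition~\ref{proposition:cycl:1}, applied to the given relation $AZXD$ with $A,X\in\Phi$ and $D\notin\Phi$, immediately gives $Z\in\Phi$. I would also note that the hypothesis $L\cap\Phi=\{X\}$ combined with $L\cap S_r^1=\{X,X^\op\}$ forces $X^\op\notin\Phi$.

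Then I would proceed by contradiction. Assume $L$ separates $A$ from $Z$. By Proposition~\ref{proposition:cycl:2}, this translates into the separation relation $XAX^\op Z$. Using the cyclic and reversal symmetries $ABCD\Leftrightarrow BCDA$ and $ABCD\Leftrightarrow DCBA$, I would rewrite $XAX^\op Z$ in the equivalent form $AXZX^\op$ and the hypothesis $AZXD$ in the equivalent form $ADXZ$. Axiom \eqref{V}, applied to the first relation $ADXZ$ (in the role of $ABCD$) and the second relation $AXZX^\op$ (in the role of $ACDE$), then produces the new relation $ADZX^\op$. A second application of Proposition~\ref{proposition:cycl:1} to $ADZX^\op$, with $A,Z\in\Phi$ and $X^\op\notin\Phi$, forces $D\in\Phi$, contradicting $D\notin\Phi$.

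The main (indeed only) creative step is discovering the correct cyclic/reflective rearrangements: among the eight variants of each relation, only the match-up $(ADXZ,\,AXZX^\op)$ aligns the three shared symbols $A,X,Z$ in the positions required by \eqref{V}; once this pairing is found, everything else falls out automatically. A minor technical point is the degenerate case $D=X^\op$, in which the conclusion of \eqref{V} would involve a repeated symbol; this case is handled directly by axiom \eqref{IV}, since the hypothesis then reads $AZXX^\op$ and this is one of the three mutually exclusive cyclic orderings of four distinct points, so the alternative ordering $AX^\op ZX$ (which is the same relation as $XAX^\op Z$) is ruled out.
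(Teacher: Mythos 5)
Your proof is correct and takes essentially the same route as the paper's: argue by contradiction, turn the assumed separation into $XAX^\op Z$ via Proposition~\ref{proposition:cycl:2}, rearrange by the cyclic/reversal symmetries, combine with the hypothesis $AZXD$ through~(\ref{V}), and finish with Proposition~\ref{proposition:cycl:1} --- the only cosmetic difference being that you first extract $Z\in\Phi$ and reach the contradiction $D\in\Phi$, whereas the paper pairs the relations so that a single application of Proposition~\ref{proposition:cycl:1} yields $X^\op\in\Phi$ directly. Your separate treatment of the degenerate case $D=X^\op$ is extra care the paper does not spell out; just note that the mutual exclusivity of the three orderings in~(\ref{IV}) that you invoke there is a true standard property of the separation relation but is not among the properties the paper explicitly lists.
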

\begin{proof}
Suppose that $L$ separates $A$ and $Z$. By Proposition~\ref{proposition:cycl:2},
we get
$AX^\op ZX$. 
Hence $AX^\op XD$ by~(\ref{V}). As $A,X\in\Omega$ and $D\notin\Omega$,
we get by Propisition~\ref{proposition:cycl:1} a contradiction $X^\op\in\Omega$.
\end{proof}

\begin{lemma}\label{lemma:circ:2}
Let $f:[a,b]\to S_r^1$ be a monotonic continuous function and $D\notin\im f$.
If $a\le t_1<t_2<t_3\le b$, then $f(t_1)f(t_2)f(t_3)D$.
\end{lemma}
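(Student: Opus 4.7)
I would prove Lemma~\ref{lemma:circ:2} using property~(\ref{IV}) to enumerate the three possible separation relations and then ruling out two of them with Proposition~\ref{proposition:cycl:1}.

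First, I would apply Proposition~\ref{proposition:cycl:4} to the restrictions $f|_{[t_1,t_2]}$ and $f|_{[t_2,t_3]}$: each inherits continuity, and monotonicity transfers because any four strictly increasing points in a subinterval are strictly increasing in $[a,b]$. This produces closed arcs $\Phi_{12}$ (endpoints $f(t_1),f(t_2)$) and $\Phi_{23}$ (endpoints $f(t_2),f(t_3)$), both contained in $\im f$. Since $f$ is injective on $[a,b]$ by Proposition~\ref{proposition:cycl:3}, the four points $f(t_1),f(t_2),f(t_3),D$ are pairwise distinct; in particular $f(t_3)\notin\Phi_{12}$, $f(t_1)\notin\Phi_{23}$, and $D$ lies in neither arc because $D\notin\im f$.

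By property~(\ref{IV}), one of $f(t_1)f(t_2)f(t_3)D$, $f(t_1)f(t_3)Df(t_2)$, $f(t_1)Df(t_2)f(t_3)$ must hold. To exclude $f(t_1)Df(t_2)f(t_3)$, I would apply Proposition~\ref{proposition:cycl:1} to $\Phi_{12}$ in this form directly: $A=f(t_1)$ and $C=f(t_2)$ lie in $\Phi_{12}$, the fourth entry $f(t_3)$ does not, and the forced conclusion $B=D\in\Phi_{12}\subseteq\im f$ contradicts $D\notin\im f$. To exclude $f(t_1)f(t_3)Df(t_2)$, I would use the cyclic rewrite $f(t_3)Df(t_2)f(t_1)$ and apply Proposition~\ref{proposition:cycl:1} to $\Phi_{23}$ with $A=f(t_3)$, $C=f(t_2)$, fourth entry $f(t_1)$; the same argument forces $D\in\Phi_{23}$, again contradicting $D\notin\im f$. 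Only $f(t_1)f(t_2)f(t_3)D$ survives.

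\textbf{Main obstacle.} The delicate step is choosing the right arc for each exclusion. Using the larger arc spanned by the full triple $f(t_1),f(t_2),f(t_3)$ (i.e.\ the image of $f|_{[t_1,t_3]}$) will not work: in the rewritten forms where Proposition~\ref{proposition:cycl:1} becomes applicable, its conclusion only predicts membership of an endpoint such as $f(t_1)$, which already lies on that arc, so no contradiction arises. Restricting to the proper sub-intervals $[t_1,t_2]$ and $[t_2,t_3]$ is what pushes the fourth entry outside the chosen arc while leaving $D$ as the entry whose predicted membership produces the contradiction.
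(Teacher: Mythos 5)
Your proof is correct and follows essentially the same route as the paper: enumerate the three possibilities via~(\ref{IV}), turn $f([t_1,t_2])$ and $f([t_2,t_3])$ into closed arcs via Proposition~\ref{proposition:cycl:4}, and kill the two bad cases with Proposition~\ref{proposition:cycl:1}. The only (cosmetic) difference is which point you force onto the sub-arc: you conclude $D\in\Phi$, contradicting $D\notin\im f$, whereas the paper concludes the remaining $f(t_i)\in\Phi$, contradicting injectivity --- the same proposition applied to the same quaternary relation under its cyclic symmetries.
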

\begin{proof}
As $f$ is injective, the points $f(t_1)$, $f(t_2)$, $f(t_3)$, $D$ are pairwise distinct.
Therefore, by~(\ref{IV}), we are in one of the following cases:
$f(t_1)f(t_2)f(t_3)D$, $f(t_2)f(t_3)f(t_1)D$, $f(t_3)f(t_1)f(t_2)D$.
The first case is just what we need. Therefore, let us consider the second case.
By Proposition~\ref{proposition:cycl:4}, $f([t_1,t_2])$ is an arc.
As $D$ does not belong to it, $f(t_3)$ does by Proposition~\ref{proposition:cycl:1}.
However, it contradicts the injectivity of $f$.
The third case is treated similarly.
\end{proof}

Let $g:[a,b]\to S_r^1$ be a path. Suppose that $t,t'\in[a,b]$ such that $t<t'$ and
$g(t)$ and $g(t')$ both belong to a line $L$ through the center of $S_r^1$ (these points either coincide or are diametrically opposite).
Then we define a new path $\f_{t,t'}g:[a,b]\to S_r^1$ by
$$
\f_{t,t'}g(t'')=
\left\{
\begin{array}{ll}
\r_L\circ g(t'')&\text{if } t<t''<t';\\
g(t'')&\text{otherwise}.
\end{array}
\right.
$$

\noindent
For the sequel, we will need the following simple fact.

\begin{proposition}\label{proposition:circ:6}
If $g$ is a {\it path} having {\it natural parametrization},
then $\f_{t,t'}g$ is also a {\it path} having {\it natural parametrization}.
It is smooth wherever $g$ is smooth except possibly at $t$ and~$t'$.
\end{proposition}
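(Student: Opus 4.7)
The plan is to verify the three assertions (continuity, natural parametrization, and smoothness outside $\{t,t'\}$) essentially by unwinding the definitions and exploiting the fact that $\r_L$ is a smooth isometry of $S_r^1$.

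First I would establish that $\f_{t,t'}g$ is continuous, and in particular a path. Writing $g=f_1\cup\cdots\cup f_n$ with the natural-parametrization break points $x_1<\cdots<x_{n+1}$, refine this partition by inserting $t$ and $t'$ (splitting whichever pieces $[x_i,x_{i+1}]$ contain them in the interior). Each restriction of $g$ to a subinterval of such a split is still a distance-preserving diffeomorphism onto an arc, hence still a smooth path having natural parametrization. After relabelling, I can assume $t=x_p$ and $t'=x_q$ for some $p<q$. The three pieces $[a,t]$, $[t,t']$, $[t',b]$ of the definition of $\f_{t,t'}g$ are thus each a finite union of smooth naturally-parametrized pieces, either of $g$ itself or of $\r_L\circ g$. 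Continuity at the seams $t$ and $t'$ is immediate because $g(t),g(t')\in L$ are fixed by $\r_L$; continuity at every other $x_i$ is inherited from $g$ (when $x_i\notin(t,t')$) or from $g$ composed with the continuous map $\r_L$ (when $x_i\in(t,t')$).

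Next I would check the natural-parametrization condition. By the refinement above, it suffices to show that whenever $f_i\colon[x_i,x_{i+1}]\to S_r^1$ is a smooth distance-preserving diffeomorphism onto its image, so is $\r_L\circ f_i$. But $\r_L\colon S_r^1\to S_r^1$ is a smooth isometry (it is the restriction of an orthogonal linear map of the ambient plane), so $\r_L\circ f_i$ is a composition of a smooth diffeomorphism with a smooth isometric embedding, hence still a smooth diffeomorphism onto its image, and it preserves distances since both factors do. Therefore on each refined piece contained in $[t,t']$ the modified path is a smooth naturally-parametrized piece, and on the refined pieces outside $[t,t']$ it coincides with the corresponding piece of $g$. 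Gluing the whole collection exhibits $\f_{t,t'}g$ as a finite union of smooth naturally-parametrized pieces, which is exactly the definition of having natural parametrization.

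Finally, I would address smoothness. At any point $u\in[a,b]\setminus\{t,t'\}$ where $g$ is smooth, the point $u$ lies strictly inside one of the three intervals $[a,t]$, $[t,t']$, $[t',b]$. On the first and third, $\f_{t,t'}g$ agrees with $g$ in a neighbourhood of $u$, so smoothness is inherited. On the middle interval, $\f_{t,t'}g$ agrees with $\r_L\circ g$ in a neighbourhood of $u$; since $\r_L$ is a smooth diffeomorphism of $S_r^1$, the composition $\r_L\circ g$ is smooth wherever $g$ is.

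The only mild obstacle is notational care in refining the partition so that $t$ and $t'$ become break points; once this bookkeeping is done, every assertion reduces to the elementary fact that $\r_L$ is a smooth isometry fixing $L\cap S_r^1$ pointwise.
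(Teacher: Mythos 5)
Your proof is correct. The paper states this proposition without proof (introducing it only as a ``simple fact''), and your argument --- refining the break-point partition so that $t$ and $t'$ become nodes, using that $\r_L$ is a smooth isometry of $S_r^1$ fixing $L\cap S_r^1$ pointwise (so the pieces of $g$ and $\r_L\circ g$ glue continuously at $t$ and $t'$ and each piece remains a distance-preserving diffeomorphism onto an arc) --- is exactly the intended elementary verification.
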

\noindent
We will call $\f_{t,t'}g$ the {\it double fold} of $g$ at $t$ and $t'$. Suppose additionally that $X$
is a nonempty subset of the plane disjoint from $L$
whose points are not separated by $L$.
Then we say that $\f_{t,t'}g$ is the double fold of $g$ at $t$ and $t'$ {\it towards} $X$
if $L$ separates no point of $X$ and $\f_{t,t'}g(t'')$ for any $t''\in[t,t']$.

\subsection{Coxeter groups} Let $S$ be a
set and $m:S^2\to\{1,2,\ldots,\infty\}$
be a function. If $m$ is symmetric, $m(s,s)=1$ for any $s\in S$ and $m(s,s')>1$ for $s\ne s'$, then 
it is called a {\it Coxeter matrix}. The group defined by generators and relations as follows:
$$
W=\<S\suchthat (ss')^{m(s,s')}=1\text{ for all pairs }(s,s')\in S^2\text{ such that }m(s,s')<\infty\>
$$
is called a {\it Coxeter group}.
It is well-known that $\ord(ss')=m(s,s')$ for any $s,s'\in S$.
The elements of $S$ are called {\it simple reflections} and
the pair $(W,S)$ is called a {\it Coxeter system} of {\it rank} $|S|$.
We will consider Coxeter systems upto
isomorphisms, which are isomorphisms of Coxeter groups that map bijectively simple reflections to simple reflections.
Let
$$
T=\bigcup_{w\in W} wSw^{-1}.
$$
The elements of this set are called {\it reflections}.

A subgroup $W_I$ of $W$ generated by a subset $I\subset S$ is called {\it standard parabolic}.
I that case, the pair $(W_I,I)$ is also a Coxeter system.
All subgroups $W'=wW_Iw^{-1}$ of $W$ for $w\in W$ are called {\it parabolic}.
The {\it rank} of $W'$ is defined to be $|I|$. For a subset $X\subset W$, the smallest 
parabolic subgroup of $W$ containing $X$ is called (if it exists)
the {\it parabolic closure} of~$X$. Passing to a standard parabolic subgroup of finite rank
containing all simple reflections of the expression under consideration
allows us
to consider only Coxeter groups of finite ranks in the rest of the paper.
The definitions and basic facts about Coxeter groups can be found in~\cite{Bourbaki},\cite{Humphreys},\cite{BB}.
\subsection{Based root systems}
We are going to outline the main constructions of~\cite{BD}. 
Let $V$ be a 
real vector space. Suppose that $\alpha\in V$ and $\alpha^\vee\in V^*$
are such that $\alpha^\vee(\alpha)=2$. The {\it reflection} of $V$ with {\it root} $\alpha$ and {\it coroot} $\alpha^\vee$
is a linear map $\r_{\alpha,\alpha^\vee}:V\to V$ defined by
$$
v\mapsto v-\alpha^\vee(v)\alpha.
$$
We are especially interested in the case where there is a symmetric
bilinear form $(\_|\_):V\times V\to\R$ such that $\alpha$ is a {\it unit} vector, that is, $(\alpha|\alpha)=1$,
and $\alpha^\vee$ is determined 
by 
$$
\alpha^\vee(v)=2(v|\alpha).
$$
In this case, we abbreviate $\r_\alpha=\r_{\alpha,\alpha^\vee}$.
Note that $(\r_\alpha v|\r_\alpha u)=(v|u)$.
In particular, a vector $\r_\alpha v$ is unit iff $v$ is so.
It is well-known that $\r_{-\alpha}=\r_\alpha$, $\r_\alpha^2=\id$ and
\begin{equation}\label{eq:r_r=rrr}
\r_{\r_\alpha v}=\r_\alpha\r_v\r_\alpha.
\end{equation}

Let $\Pi\subset V$ be a subset of unit vectors satisfying the following properties:
\begin{itemize}
\item the set $\Pi$ is positively independent;
\item for distinct $\alpha,\beta\in\Pi$, either $$(\alpha|\beta)=-\cos\(\frac{\pi}{m(\alpha,\beta)}\)$$ for some
      natural number $m(\alpha,\beta)\ge2$ or $(\alpha|\beta)\le-1$, in which case we define $m(\alpha,\beta)=\infty$.
\end{itemize}
Let also $m(\alpha,\alpha)=1$ for any $\alpha\in\Pi$ and
$$
S=\{\r_\alpha\suchthat \alpha\in\Pi\},\quad W=\<S\>\subset\GL(V),\quad \Phi=\{w\alpha\suchthat w\in W,\alpha\in\Pi\}.
$$
The pair $(\Phi,\Pi)$ is called a {\it based root system} in $V$. Specifically, elements of $\Phi$ are called {\it roots},
elements of $\Pi$ are called {\it simple roots} and $V$ is called the {\it total space}.
%
In this case, $(W,S)$ is a Coxeter system
with Coxeter matrix $m$, called the Coxeter system {\it associated} to $(\Phi,\Pi)$.
In that case, $W$ also acts on the dual space $V^*$ by the rule $(wf)(v)=f(w^{-1}v)$,
where $w\in W$, $f\in V^*$ and $v\in V$.

The set $\Phi$ is the disjoint union of $\Phi^+$ and $\Phi^-$, where
$\Phi^+$ denotes the set of all roots, called {\it positive}, that are positive combinations of elements of $\Pi$
and $\Phi^-=-\Phi^+$. 
The facts $\alpha\in\Phi^+$ and $\alpha\in\Phi^-$ are abbreviated to $\alpha>0$ and $\alpha<0$ respectively.
We will also say that two roots {\it have different signs} if they
do not belong simultaneously to $\Phi^+$ or to $\Phi^-$.

Note that there is a bijection $\Phi^+\to T$ given by $\alpha\mapsto\r_\alpha$.
We will denote the inverse map by $t\mapsto\v_t$. It follows from~(\ref{eq:r_r=rrr}) that
\begin{equation}\label{eq:b}
\r_{w\v_s}=wsw^{-1},\quad
\r_\alpha=\r_\beta\Leftrightarrow\alpha=\pm\beta,\quad w\r_\alpha w^{-1}=\r_{w\alpha}
\end{equation}
for $s\in S$, $w\in W$ and $\alpha,\beta\in\Phi$.

\begin{remark}\rm\label{rem:arbitrary:WS}
An arbitrary Coxeter system $(W,S)$ is isomorphic to the Coxeter system associated to
a based root system, for which the simple roots form a basis of the total space.
\end{remark}

\subsection{Reflection subgroups}\label{Reflection_subgroups} Let $(W,S)$ be a Coxeter system. A subgroup $W'\subset W$ is called {\it a reflection subgroup}
if it is generated by reflections it contains, that is, $W'=\<W'\cap T\>$.
To describe a generating set of $W'$, let us consider the function $N:W\to\P(T)$ defined by
$$
N(w)=\{t\in T\suchthat \l(tw)<\l(w)\},
$$
where $\l:W\to\{0,1,2,\ldots\}$ is the length function for $W$ with respect to $S$.
Let
$$
\chi(W')=\{t\in T\suchthat N(t)\cap W'=\{t\}\}.
$$

\begin{proposition}[\mbox{\cite{Deodhar1},\cite{Dyer1}}]\label{proposition:DeDy}
Let $W'$ be a reflection subgroup of a Coxeter group $W$.
Then $(W',\chi(W'))$ is a Coxeter system.
\end{proposition}

\begin{proposition}[\mbox{\cite[Lemma 3.5]{BD}}]\label{proposition:0}
Let $(\Phi,\Pi)$ be a based root system in $V$, $(W,S)$ be the associated Coxeter system
and $W'$ be a reflection subgroup of $W$. Let
$$
\Phi_{W'}=\{\alpha\in\Phi\suchthat\r_\alpha\in W'\},\quad \Pi_{W'}=\{\alpha\in\Phi^+\suchthat\r_\alpha\in\chi(W')\},\quad
\Phi_{W'}^+=\Phi_{W'}\cap\Phi^+.
$$
Then $(\Phi_{W'},\Pi_{W'})$ is a based root system in $V$ with associated Coxeter system $(W',\chi(W'))$
and positive roots $\Phi_{W'}^+$.
\end{proposition}
\noindent
The set $\Pi_{W'}$ from this proposition is called the {\it canonical simple system} for $W'$.
The cardinality of this set is equal to $|\chi(W')|$. 

\begin{proposition}[\mbox{\cite[Lemma 2.9]{Dyer2}}]\label{proposition:Dy}
Let $W'$ be a finite reflection subgroup of a Coxeter group $W$. Then the parabolic closure
of $W'$
exists and is a finite reflection subgroup of the same rank as~$W'$.
\end{proposition}

In this paper, we will only be interested in the case, where a root subgroup $W'$ is generated by two distinct reflections.
Such subgroups are called {\it dihedral reflection subgroups}.
In that case, $|\chi(W')|=2$ by~\cite[3.9]{Dyer1}. Therefore, the canonical simple system consists of two roots.
It is easy to check that they are linearly independent.

\begin{corollary}\label{corollary:6}
The set of all finite orders of products of two reflections in a Coxeter group $W$
coincides with the set of all positive divisors of finite entries of the Coxeter matrix of~$W$.
\end{corollary}
\begin{proof}
Let $t_1$ and $t_2$ be reflections. If $t_1=t_2$, then $\ord(t_1t_2)=\ord(1)=1$.
Suppose that $t_1\ne t_2$ and $n=\ord(t_1t_2)<\infty$. 
By Proposition~\ref{proposition:Dy},
the parabolic closure $\D'$ of $\D=\<t_1,t_2\>$ exists and has rank 2, that is, $\D'=wW_{\{s_1,s_2\}}w^{-1}$, where $s_1\ne s_2$
are simple reflections.
Hence $|\D'|=|W_{\{s_1,s_2\}}|=2m(s_1,s_2)$, where $m$ denotes the Coxeter matrix of $W$.
By Lagrange's theorem $|\D|=2n$ is a divisor of $2m(s_1,s_2)$, whence $n$ is a divisor of $m(s_1,s_2)$.
The converse statement follows from a standard description of subgroups of dihedral groups.
\end{proof}

\def\V{\mathscr V}

\subsection{Walls, chambers and panels}\label{Walls, chambers and panels}
Let $(W,S)$ be a Coxeter system associated to a based root system $(\Phi,\Pi)$
in $V$ such that $\Pi$ is a basis of $V$.
As usual, the set of reflections is denoted by $T$ .
For $t\in T$ and $\alpha\in\Phi$,
we consider the following hyperplanes in $V^*$:
$$
H_t=\{f\in V^*\suchthat tf=f\},\quad H_\alpha=\{f\in V^*\suchthat f(\alpha)=0\}
$$
called a {\it $t$-wall} and an {\it $\alpha$-wall}. 
Clearly, $H_t=H_\alpha$ if $t=\r_\alpha$ and $H_t=H_r$ implies $t=r$.
Moreover, $wH_t=H_{wtw^{-1}}$ for any $t\in T$ and $w\in W$.
Note that $H_\alpha$ divides the difference $V^*\setminus H_\alpha$ into the halfspaces
$$
V^{*+}_\alpha=\{f\in V^*\suchthat f(\alpha)>0\},\quad V^{*-}_\alpha=\{f\in V^*\suchthat f(\alpha)<0\}.
$$
We will consider the following convex cone
\begin{equation}\label{eq:C}
C=\{f\in V^*\suchthat\forall s\in S: f(\v_s)>0\}.
\end{equation}
Recall that $\v_s$ is the simple root corresponding to $s$.
It follows from this definition that $f(\alpha)>0$ for any $f\in C$ and $\alpha\in \Phi^+$.

Any set $w\,C$, where $w\in W$, is called a {\it chamber}
and $C$ itself is called the {\it fundamental} chamber.
Its closure 
is equal to
$$
\overline{C}=\{f\in V^*\suchthat\forall s\in S: f(\v_s)\ge0\}
$$
and is the disjoint union of the following sets:
\begin{equation}\label{eq:CX}
C_X=\{f\in V^*\suchthat \forall s\in X: f(\v_s)=0;\; \forall s\in S\setminus X: f(\v_s)>0\}
\end{equation}
over all $X\subset S$. Obviously, $C_\emptyset=C$ and $C_S=\{0\}$.
These sets have the following property whose prove is almost\footnote{The only difference is~\cite[Ch.\,V, \sectsign 4, no.\,5]{Bourbaki},
where case a) may produce a hyperbola instead of a line.} identical
to~\cite[Ch.\,V, \sectsign 4, no.\,6, Proposition~5]{Bourbaki}.
\begin{proposition}\label{proposition:4}
\!Suppose that $w\,C_X\cap w'\,C_{X'}$ $\ne\emptyset$ for some $w,w'\in W$ and $X,X'\subset S$.
Then $X=X'$, $w\<X\>=w'\<X'\>$, and $w\,C_X=w'\,C_{X'}$.
\end{proposition}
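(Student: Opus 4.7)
The plan is to reduce to the case $w' = 1$ by replacing the pair $(w, w')$ with $(w'^{-1}w, 1)$ (multiplication from the left on $E^*$), so it suffices to prove: if there exist $f_0 \in C_X$ and $f_1 \in C_{X'}$ with $u f_0 = f_1$ for some $u \in W$, then $f_0 = f_1$ and $u \in \<X\>$. Once this is established, $f_0 = f_1$ forces $X = X'$ by the disjointness of the $C_X$ (which follows directly from their definition in~(\ref{eq:CX})), while $u = w'^{-1}w \in \<X\>$ gives $w\<X\> = w'\<X'\>$. Finally, I would observe that every $s \in X$ fixes $C_X$ pointwise (because the contragredient action gives $(sf)(e) = f(e) - 2(e|e_s)f(e_s)$, and $f(e_s)=0$ for $f \in C_X$), so $\<X\>$ stabilizes $C_X$ setwise; combined with $w'^{-1}w \in \<X\>$ this yields $wC_X = w'C_{X'}$.

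The core claim is thus the following: \emph{if $f_0, f_1 \in \overline{C}$ and $u f_0 = f_1$, then $f_0 = f_1$ and $u$ lies in $\<X\>$, where $X = \{s \in S : f_0(e_s) = 0\}$.} I would prove this by induction on $\ell(u)$. The base case $\ell(u) = 0$ is immediate. For the inductive step, choose a simple reflection $s \in S$ with $\ell(us) < \ell(u)$; by a standard property of the geometric representation (a positive root is sent to a negative root by $u$ when it shortens $u$), we have $u e_s \in R^-$, so $u e_s = -\sum_{t \in S} c_t e_t$ with all $c_t \ge 0$.

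Now I would exploit the two conditions $f_0, f_1 \in \overline{C}$ by computing the same real number two ways. On one hand, since $f_1 = u f_0$, we have $f_1(u e_s) = f_0(u^{-1} u e_s) = f_0(e_s) \ge 0$. On the other hand, since $u e_s$ has only nonpositive coefficients in the simple root basis and $f_1 \in \overline{C}$, we get $f_1(u e_s) = -\sum c_t f_1(e_t) \le 0$. Hence $f_0(e_s) = 0$, which means $s \in X$ and in particular $sf_0 = f_0$. Then $us \cdot f_0 = uf_0 = f_1$, and since $\ell(us) < \ell(u)$ the inductive hypothesis gives $f_0 = f_1$ and $us \in \<X\>$; together with $s \in X$ this yields $u \in \<X\>$, completing the induction.

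The main obstacle is the length-reduction step: one must know that for every $u \ne 1$ there exists a simple reflection $s$ with $\ell(us) < \ell(u)$ and that this forces $u e_s \in R^-$. This is the classical description of descent sets via the geometric representation (see~\cite[Ch.~VI, §1]{Bourbaki} or~\cite[Section~4.4]{BB}), and it is the only nontrivial input; once it is in hand, the sign argument above pins $f_0$ to the wall $H_{e_s}$ and makes the induction work. The rest of the argument is essentially bookkeeping with the definitions of $C_X$ and the contragredient action.
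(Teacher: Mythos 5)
Your argument is correct: the induction on $\ell(u)$, using that $\ell(us)<\ell(u)$ forces $ue_s\in R^-$ and then evaluating $f_1(ue_s)$ in two ways to pin $f_0$ to the wall $H_{e_s}$, is exactly the standard proof of the cited result (it is the argument behind Bourbaki, Ch.\,V, \S 4, no.\,6, Proposition 5, which the paper invokes without proof). The surrounding bookkeeping — disjointness of the $C_X$, the fact that $\<X\>$ fixes $C_X$ pointwise, and the reduction to $w'=1$ — is also fine, so there is nothing to add.
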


\noindent
We will use the following abbreviation: $C_s=C_{\{s\}}$ for any $s\in S$.
These sets are called {\it panels}.
Clearly, $C_s\subset H_s$.

For any union
$R=\bigcup_{i\in I} w_iC_{X_i}$, where $w_i\in W$ and $X_i\subset S$, we define
$\codim R$ to be the minimum of $|X_i|$ for $i\in I$.
For example, $\codim w\,C=0$ and $\codim w\,C_s=1$.

\begin{lemma}\label{lemma:16} Let
$\alpha\in\Phi$ and $w\in W$.
If $w^{-1}\alpha>0$, then $wC\subset V^{*+}_\alpha$.
If $w^{-1}\alpha<0$, then $wC\subset V^{*-}_\alpha$.
Therefore, walls and chambers are disjoint.
\end{lemma}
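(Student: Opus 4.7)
The plan is to unwind the definitions of the contragredient action and the halfspaces, and use the single fact already noted after the definition of $C$, namely that $f(\alpha)>0$ for any $f\in C$ and $\alpha\in R^+$.

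First I would compute, for $f\in C$, the value of $wf$ at $\alpha$ using the contragredient action:
$$
(wf)(\alpha)=f(w^{-1}\alpha).
$$
Since $R$ is $W$-invariant and every root is either positive or negative, $w^{-1}\alpha$ lies in $R^+$ or $R^-$. In the case $w^{-1}\alpha>0$, the recalled positivity property gives $f(w^{-1}\alpha)>0$ for every $f\in C$, hence $(wf)(\alpha)>0$, so $wf\in E^{*+}_\alpha$, proving $wC\subset E^{*+}_\alpha$.

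The case $w^{-1}\alpha<0$ is symmetric: write $-w^{-1}\alpha\in R^+$ and apply the same positivity property to obtain $f(w^{-1}\alpha)<0$, hence $(wf)(\alpha)<0$ and $wC\subset E^{*-}_\alpha$.

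Finally, for the disjointness of walls and chambers, observe that an arbitrary chamber has the form $wC$ and, by the definitions in~(\ref{eq:b}), an arbitrary wall is of the form $H_t=H_\alpha$ for some root $\alpha$ with $t=\t_\alpha$. The two cases above place $wC$ entirely inside one of the open halfspaces $E^{*+}_\alpha$ or $E^{*-}_\alpha$, neither of which meets $H_\alpha=\{f\in E^*:f(\alpha)=0\}$. I expect no genuine obstacle here; the only thing to be careful about is to invoke $W$-invariance of $R$ to ensure the dichotomy $w^{-1}\alpha>0$ or $w^{-1}\alpha<0$ is exhaustive.
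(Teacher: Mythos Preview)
Your proof is correct and matches the paper's argument essentially verbatim: the paper writes ``Suppose that $w^{-1}\alpha>0$. For any $c\in C$, we get $wc(\alpha)=c(w^{-1}\alpha)>0$. The second claim is proved similarly.'' Your version simply spells out the disjointness conclusion more explicitly than the paper does.
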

\begin{proof}
Suppose that $w^{-1}\alpha>0$. For any $c\in C$, we get $wc(\alpha)=c(w^{-1}\alpha)>0$.
The second claim is proved similarly.
\end{proof}

\begin{corollary}\label{corollary:11} Let 
$\alpha\in\Phi$ and
$x,y\in W$.
The wall $H_\alpha$ separates the chambers $x\,C$ and $y\,C$ if and only if
the roots $x^{-1}\alpha$ and $y^{-1}\alpha$ have different signs.
\end{corollary}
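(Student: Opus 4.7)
The plan is to derive this directly from Lemma~\ref{lemma:16} together with the fact that $R = R^+ \sqcup R^-$ and that $W$ permutes $R$, so that both $x^{-1}\alpha$ and $y^{-1}\alpha$ are roots and in particular each has a well-defined sign.

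First I would handle the ``if'' direction. Assume $x^{-1}\alpha$ and $y^{-1}\alpha$ have different signs; without loss of generality $x^{-1}\alpha > 0$ and $y^{-1}\alpha < 0$. Then Lemma~\ref{lemma:16} gives $xC \subset E^{*+}_\alpha$ and $yC \subset E^{*-}_\alpha$. Since $H_\alpha$ is a hyperplane whose complement in $E^*$ is precisely the disjoint union $E^{*+}_\alpha \cup E^{*-}_\alpha$, and since chambers are nonempty subsets disjoint from $H_\alpha$ (again by Lemma~\ref{lemma:16}, which states that walls and chambers are disjoint), the definition of $\notsim_{H_\alpha}$ from Section~2.1 applied to nonempty subsets yields $xC \notsim_{H_\alpha} yC$, i.e.\ $H_\alpha$ separates $xC$ and $yC$.

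For the ``only if'' direction, I would prove the contrapositive. Suppose $x^{-1}\alpha$ and $y^{-1}\alpha$ have the same sign. If both are positive, Lemma~\ref{lemma:16} places $xC$ and $yC$ inside the same open halfspace $E^{*+}_\alpha$; if both are negative, they lie in $E^{*-}_\alpha$. In either case any two points $f \in xC$, $g \in yC$ lie in a single connected component of $E^* \setminus H_\alpha$, so $f \sim_{H_\alpha} g$, and consequently $xC \sim_{H_\alpha} yC$; that is, $H_\alpha$ does not separate them.

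There is no real obstacle here: the statement is essentially a packaging of Lemma~\ref{lemma:16}, and the only subtlety is to note that the sign of $x^{-1}\alpha$ is defined at all, which follows from $W$ permuting $R = R^+ \sqcup R^-$ so that $x^{-1}\alpha$ is always a (positive or negative) root and the two cases of Lemma~\ref{lemma:16} exhaust the possibilities.
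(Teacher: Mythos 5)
Your proof is correct and is exactly the paper's argument: the paper's proof of this corollary consists of the single line ``The result follows from Lemma~\ref{lemma:16}'', and your write-up just fills in the two directions (different signs put $xC$ and $yC$ in the two distinct halfspaces $E^{*+}_\alpha$, $E^{*-}_\alpha$; equal signs put them in the same one). No issues.
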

\begin{proof}
The result follows from Lemma~\ref{lemma:16}.
\end{proof}

\begin{definition}\label{connection}
Chambers $w\,C$ and $w'\,C$ are connected through a wall
$H_t$ if and only if
$\codim w\,\overline{C}\cap w'\,\overline{C}\cap H_t=1$.
\end{definition}

\begin{remark}\label{rem:1}\rm The codimension in Definition~\ref{connection}
is well-defined
and 
it suffices to claim that it be at most $1$. Therefore, if $w\,C$ and $w'\,C$ are connected through $H_t$,
then $w\,C$ and $w\,C$ (resp. $w'\,C$ and $w'\,C$) are also connected through $H_t$.
\end{remark}

\begin{remark}\label{rem:4}\rm
If $\dim V=2$, then $w\,C$ and $w'\,C$ are connected through $H_t$ if and only
if $w\,\overline{C}\cap w'\,\overline{C}\cap H_t\ne0$.
This result follows from the previous remark.
\end{remark}

\noindent
The following results follow directly from the definitions and Proposition~\ref{proposition:4}.

\begin{proposition}\label{proposition:2}
Chambers $w\,C$ and $w'\,C$ are connected through a wall $H_t$ if and only if there exists $s\in S$
such that $t=wsw^{-1}$ and either $w'=w$ or $w'=ws$.
\end{proposition}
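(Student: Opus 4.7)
The plan is to prove both implications by carefully decomposing the triple intersection $w\overline{C}\cap w'\overline{C}\cap H_t$ into pieces of the form $w_0C_X$ and invoking the combinatorial description of these pieces from Proposition~\ref{proposition:4}.

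For the backward direction, assume $t=wsw^{-1}$ and $w'\in\{w,ws\}$. I would first handle $w'=w$: using $H_t=wH_s$ together with $\overline{C}=\bigsqcup_{X\subset S}C_X$, the intersection becomes
\[
w\overline{C}\cap H_t \;=\; w(\overline{C}\cap H_s)\;=\;\bigsqcup_{X\ni s}wC_X,
\]
whose codimension is $1$ (attained by $X=\{s\}$). For $w'=ws$, I would first verify that $\overline{C}\cap s\overline{C}=\bigsqcup_{X\ni s}C_X$ by direct computation using $(sf)(e_s)=-f(e_s)$ and $(sf)(e_{s'})=f(e_{s'})$ whenever $f(e_s)=0$. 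Then $w\overline{C}\cap ws\overline{C}$ reduces to the same union as before, which already lies in $H_t=wH_s$, so the codimension is again $1$.

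For the forward direction, by Remark~\ref{rem:1} I only need the codimension to be at most $1$, which gives a cell $w_0C_{X_0}\subset w\overline{C}\cap w'\overline{C}\cap H_t$ with $|X_0|\le1$. The case $|X_0|=0$ is ruled out immediately by Lemma~\ref{lemma:16} (a chamber $w_0C$ cannot lie inside the wall $H_t$), so $X_0=\{s\}$ for some $s\in S$. Applying Proposition~\ref{proposition:4} to the two inclusions $w_0C_s\cap wC_X\ne\emptyset$ and $w_0C_s\cap w'C_{X'}\ne\emptyset$ forces $X=X'=\{s\}$, $w_0C_s=wC_s=w'C_s$, and $w_0\in w\langle s\rangle\cap w'\langle s\rangle$. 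Hence $w'\in\{w,ws\}$, as required.

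It remains to show $t=wsw^{-1}$. Here I use that $C_s=\{f:f(e_s)=0,\;f(e_{s'})>0\text{ for }s'\ne s\}$ is nonempty and relatively open in $H_s$, so $w_0C_s$ is nonempty and relatively open in $w_0H_s=H_{w_0sw_0^{-1}}$. The inclusion $w_0C_s\subset H_t$ thus forces the hyperplanes $H_t$ and $H_{w_0sw_0^{-1}}$ to coincide (otherwise their intersection would be a proper subspace of $H_{w_0sw_0^{-1}}$, contradicting relative openness), and by the injectivity $H_t=H_r\Rightarrow t=r$ already noted in the text we get $t=w_0sw_0^{-1}$. A short computation, using $s^2=1$, shows that $w_0sw_0^{-1}=wsw^{-1}$ whether $w_0=w$ or $w_0=ws$. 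The only mildly subtle point in the whole argument is the last one: one must not confuse the combinatorial codimension defined via the stratification with topological codimension in $E^*$, and the relative openness of $C_s$ in $H_s$ is exactly what bridges the two.
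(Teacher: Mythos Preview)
Your proof is correct and is precisely a detailed unpacking of what the paper indicates: the paper states only that the result ``follows directly from the definitions and Proposition~\ref{proposition:4}'' without writing out the argument. Your decomposition of $w\overline{C}\cap w'\overline{C}\cap H_t$ into pieces $w_0C_X$, the use of Proposition~\ref{proposition:4} to pin down $X=\{s\}$ and $w'\in w\langle s\rangle$, and the hyperplane-coincidence argument to recover $t=wsw^{-1}$ are exactly the steps implicit in that hint.
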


\begin{proposition}\label{corollary:12}
If chambers $w\,C$ and $w'\,C$ are connected through a wall $H_t$
and a wall $H_r$ separates them, then $H_r=H_t$.
\end{proposition}

Finally, we define the Coxeter complex $\Sigma(W,S)$ to be a simplicial complex whose simplices of codimension
$k$ are $(\overline C_X\setminus\{0\})/\R^{>0}$, where $|X|=k$ and $\R^{>0}$ denotes the group of positive integers.
The union of simplices of codimension zero (the total space) is $(\mathfrak X\setminus\{0\})/\R^{>0}$,
where $\mathfrak X$ is the {\it Tits cone} of $(W,S)$, which is by definition the union of the closures of all chambers.
We will call the images of chambers, walls and panels under the projection $\mathfrak X\setminus\{0\}\to(\mathfrak X\setminus\{0\})/\R^{>0}$
are also {\it chambers}, {\it walls} and {\it panels}.

\section{Subexpressions and galleries}

\subsection{Expressions and subexpressions}\label{Expressions_and_subexpressions} As we already mentioned in the introduction, an {\it expression}
is a finite sequence $\u{s}=(s_1,\ldots,s_m)$, where each $s_i\in S$ for a Coxeter system $(W,S)$.
Its {\it subexpression} is a sequence $\u{\gamma}=(\gamma_1,\ldots,\gamma_m)$ such that $\gamma_i\in\{0,1\}$
for each $i=1,\ldots,m$. We denote this fact by $\u{\gamma}\subset\u{s}$.
The set of all subexpressions of $\u{s}$ is denoted by $\SubExpr(\u{s})$. Note that $\SubExpr(\u{s})\cong\{0,1\}^{|\u{s}|}$.
Yet we will assume that the sets $\SubExpr(\u{s})$ are disjoint for different $\u{s}$.
In other words, each subexpression $\u{\gamma}$ ``knows'' its expression (or type) $\u{s}$.
If we, however, want to say that two sequences (of any origin) $\u{\gamma}$ and $\u{\delta}$ in $0$ and $1$
are equal, we will write $\u{\gamma}\=\u{\delta}$.

Let $\u{\gamma}$ and $\u{\delta}$ be 
subexpressions of $\u{s}$ and $X=\{i=1,\ldots,m\suchthat\gamma_i\ne\delta_i\}$.
We define the {\it Hamming distance} by $\dist(\u{\gamma},\u{\delta})=|X|$.
If it is grater then zero, then  we say that $\u{\gamma}$ and $\u{\delta}$ {\it differ maximally} at
the maximal element of $X$.

For each $i=1,\ldots,m$, we consider the {\it folding operator} $\f_i:\SubExpr(\u{s})\to\SubExpr(\u{s})$
given by
$$
\f_i(\gamma_1,\ldots,\gamma_m)=(\gamma_1,\ldots,\gamma_{i-1},1-\gamma_i,\gamma_{i+1},\ldots,\gamma_m).
$$
The operators $\f_i$ have order two and commute with each other. We will use the notation
\begin{equation}\label{eq:less}
\u{\gamma}^{<i}=s_1^{\gamma_1}s_2^{\gamma_2}\cdots s_{i-1}^{\gamma_{i-1}},
\qquad\u{\gamma}^{\max}=\u{s}^{\u{\gamma}}.
\end{equation}

We assume additionally that $(W,S)$ is the Coxeter system associated to a based root system $(\Phi,\Pi)$ such that $\Pi$
is a basis of its total space $V$. In that case, there is a bijection $T\ito\Phi^+$ given by the map $t\mapsto\v_t$.
For any subexpression $\u{\gamma}\subset\u{s}$, we define the vector
\begin{equation}\label{eq:to}
\u{\gamma}^{\to i}=\u{\gamma}^{<i}(-\v_{s_i}).
\end{equation}
The reader can check the following simple formulas:
\begin{equation}\label{eq:cyb1:cat:5}
(\f_i\u{\gamma})^{<k}=
\left\{
\begin{array}{ll}
\r_{\u{\gamma}^{\to i}}\u{\gamma}^{<k}&\text{if }k>i;\\[3pt]
\u{\gamma}^{<k}&\text{if }k\le i,
\end{array}
\right.
\quad
(\f_i\u{\gamma})^{\to k}=
\left\{
\begin{array}{ll}
\r_{\u{\gamma}^{\to i}}\u{\gamma}^{\to k}&\text{if }k>i;\\[3pt]
\u{\gamma}^{\to k}&\text{if }k\le i.
\end{array}
\right.
\end{equation}

For a pair of indices $(i,j)$ such that $1\le i<j\le m$, we define the {\it double folding operator} $\f_{i,j}$ as follows:
it is applicable to a subexpression $\u{\gamma}\subset\u{s}$ if and only if
$\f_i\f_j\u{\gamma}$ has the same target as $\u{\gamma}$, in which case we define $\f_{i,j}\u{\gamma}=\f_i\f_j\u{\gamma}$.
We call $\f_{i,j}\u{\gamma}$ the {\it double fold} of $\u{\gamma}$ at $i$ and $j$.
Note the following equivalent (and sometimes easier to check) reformulations of the applicability condition.

\begin{proposition}\label{proposition:applicable}
Let $\u{\gamma}\subset\u{s}$. The double folding operator $\f_{i,j}$ is applicable to $\u{\gamma}$ if and only if
one of the following equivalent conditions holds:
{\renewcommand{\labelenumi}{{\it(\roman{enumi})}}
\renewcommand{\theenumi}{{\rm(\roman{enumi})}}
\begin{enumerate}
\itemsep=4pt
\item\label{proposition:applicable:i} $s_is_{i+1}^{\gamma_{i+1}}\cdots s_{j-1}^{\gamma_{j-1}}=s_{i+1}^{\gamma_{i+1}}\cdots s_{j-1}^{\gamma_{j-1}}s_j$;
\item\label{proposition:applicable:ii} $\u{\gamma}^{\to i}=\pm\u{\gamma}^{\to j}$.
\end{enumerate}}
\end{proposition}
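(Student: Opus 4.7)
The plan is to unfold the applicability condition into a plain identity in the group, verify it matches condition (i) by a short case analysis, and then translate that identity into condition (ii) via the relationship between reflections and their roots given in (\ref{eq:b}) and (\ref{eq:tesi}).

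First I would expand ``$\f_{i,j}$ applicable'' directly: it means $\u{s}^{\f_i\f_j\u\gamma}=\u{s}^{\u\gamma}$. Since $\f_i\f_j$ only flips the $i$th and $j$th bits, after cancelling the shared prefix $s_1^{\gamma_1}\cdots s_{i-1}^{\gamma_{i-1}}$ on the left and the shared suffix $s_{j+1}^{\gamma_{j+1}}\cdots s_m^{\gamma_m}$ on the right, the condition collapses to
$$
s_i^{1-\gamma_i}\,w\,s_j^{1-\gamma_j}=s_i^{\gamma_i}\,w\,s_j^{\gamma_j},\qquad w:=s_{i+1}^{\gamma_{i+1}}\cdots s_{j-1}^{\gamma_{j-1}}.
$$
Running over the four cases $(\gamma_i,\gamma_j)\in\{0,1\}^2$ and using $s_i^2=s_j^2=1$, each case simplifies to $s_iw=ws_j$, which is exactly condition (i). This proves (applicability) $\Leftrightarrow$ (i).

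For (i) $\Leftrightarrow$ (ii), I would unwrap (ii) using the notation of (\ref{eq:less_to}) and the factorisation $\u\gamma^{<j}=\u\gamma^{<i}\,s_i^{\gamma_i}\,w$. The equality $\u\gamma^{<i}(-e_{s_i})=\pm\u\gamma^{<i}\,s_i^{\gamma_i}\,w\,(-e_{s_j})$ cancels the leading $\u\gamma^{<i}$ and becomes
$$
e_{s_i}=\pm\,s_i^{\gamma_i}\,w\,e_{s_j}.
$$
If $\gamma_i=0$ this reads $e_{s_i}=\pm we_{s_j}$; if $\gamma_i=1$, multiplying through by $s_i$ and using $s_ie_{s_i}=-e_{s_i}$ absorbs the sign and gives the same equation $e_{s_i}=\pm we_{s_j}$. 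So (ii) is equivalent to a single root identity.

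Finally, I would pass from roots to reflections using (\ref{eq:tesi}) and (\ref{eq:b}): the identity $e_{s_i}=\pm we_{s_j}$ is equivalent to $\t_{e_{s_i}}=\t_{we_{s_j}}=w\t_{e_{s_j}}w^{-1}$, i.e.\ to $s_i=ws_jw^{-1}$, which is (i). In the dihedral setting of Section~\ref{Linear dihedral groups} the same argument applies verbatim, since $(\D,\{A,B\})$ is itself a Coxeter system and the analogues of (\ref{eq:tesi}) and (\ref{eq:b}) still hold for the roots in $\mathscr R$. The only place where one has to be slightly careful is the sign manipulation in the $\gamma_i=1$ subcase of the root identity above; this is really the one step where being pedantic about absorbing $\pm$ signs pays off, and it is the only mildly tricky point in an otherwise direct unfolding of definitions.
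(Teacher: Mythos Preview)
Your proof is correct and follows essentially the same route as the paper's: cancel the common prefix and suffix to reduce applicability to $s_iw=ws_j$, then pass between this group identity and the root identity $\u{\gamma}^{\to i}=\pm\u{\gamma}^{\to j}$ via (\ref{eq:b}) and (\ref{eq:tesi}). The only difference is cosmetic: the paper avoids your four-case check in step~1 (a single left-and-right multiplication by $s_i^{\gamma_i}$ and $s_j^{\gamma_j}$ suffices) and handles the $\gamma_i$-dependence in step~2 by conjugating $s_i=\t_{we_{s_j}}$ directly with $s_1^{\gamma_1}\cdots s_i^{\gamma_i}$, which absorbs the sign automatically.
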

\begin{proof}
Part~\ref{proposition:applicable:i} follows from the equality $\u{s}^{\u{\gamma}}=\u{s}^{\f_i\f_j\u{\gamma}}$
by cancelling out the factors at positions $1,\ldots,i-1,j+1,\ldots,|\u{s}|$.
As $s_j=\r_{\v_{s_j}}$, part~\ref{proposition:applicable:i} is equivalent to
$$
s_i=\r_{s_{i+1}^{\gamma_{i+1}}\cdots s_{j-1}^{\gamma_{j-1}}\v_{s_j}}.
$$
Conjugating with $s_1^{\gamma_1}\cdots s_i^{\gamma_i}$, we get an equivalent
form $\r_{\u{\gamma}^{\to i}}=\r_{\u{\gamma}^{\to j}}$,
whence part~\ref{proposition:applicable:ii}.
\end{proof}

There are the following formulas for double folding operators similar to~(\ref{eq:cyb1:cat:5}):
\begin{equation}\label{eq:33}
(\f_{i,j}\u{\gamma})^{<k}=
\left\{\arraycolsep=2pt
\begin{array}{ll}
\r_{\u{\gamma}^{\to i}}\u{\gamma}^{<k}&\text{if }\;i<k\le j;\\[3pt]
\u{\gamma}^{<k}&\text{otherwise},
\end{array}
\quad
\right.
(\f_{i,j}\u{\gamma})^{\to k}=
\left\{\arraycolsep=2pt
\begin{array}{ll}
\r_{\u{\gamma}^{\to i}}\u{\gamma}^{\to k}&\text{if }\;i<k\le j;\\[3pt]
\u{\gamma}^{\to k}&\text{otherwise}.
\end{array}
\right.
\end{equation}

Thus any edge of $\Sub(\u{s},w)$ has the form $(\u{\gamma},\f_{i,j}\u{\gamma})$. The {\it color} $\alpha$ of this edge
is the positive of two roots $\pm\u{\gamma}^{\to i}$
We will also say that $(i,j)$ is an {\it $\alpha$-pair} for $\u{\gamma}$.

\subsection{The order}\label{Orders} We will use the following binary relation on $\SubExpr(\u{s})$:
$\u{\delta}<\u{\gamma}$ if and only if
$$
\u{\delta}\ne\u{\gamma},\quad \u{\delta}^{\max}=\u{\gamma}^{\max},\quad \l(\u{\delta}^{<i})<\l(\u{\gamma}^{<i}),
$$
where $\u{\delta}$ and $\u{\gamma}$ differ maximally at $i$ and $\l$ denotes the length function for $(W,S)$.
The reader can easily prove, applying~\cite[Lemma 3.1]{BD}, that $\u{\delta}<\u{\gamma}$ is equivalent to
$\u{\gamma}^{\to i}>0$ and to $\u{\delta}^{\to i}<0$.

\begin{lemma}
The relation $<$ a partial strict order on $\SubEx(\u{s})$.
It is a total strict order on each $\SubEx(\u{s},w)$.
\end{lemma}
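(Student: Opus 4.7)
The argument will hinge on one symmetry observation that I will establish first: whenever $\u{\delta},\u{\gamma}\subset\u{s}$ share a target and differ maximally at position $i$, the roots $\u{\delta}^{\to i}$ and $\u{\gamma}^{\to i}$ are negatives of one another. The idea is to cancel the common tail $s_{i+1}^{\gamma_{i+1}}\cdots s_m^{\gamma_m}$ from $\u{\delta}^{\max}=\u{\gamma}^{\max}$ to get $\u{\delta}^{<i}s_i^{\delta_i}=\u{\gamma}^{<i}s_i^{\gamma_i}$; since $\delta_i\ne\gamma_i$ and $s_i^2=1$ this forces $\u{\delta}^{<i}=\u{\gamma}^{<i}s_i$, and applying the resulting group element to $-e_{s_i}$ yields
$$
\u{\delta}^{\to i}=\u{\gamma}^{<i}s_i(-e_{s_i})=\u{\gamma}^{<i}e_{s_i}=-\u{\gamma}^{\to i}.
$$
Because $W$ (or the linear dihedral group $\D$) permutes roots, $\u{\gamma}^{\to i}$ is itself a root and so is strictly positive or strictly negative; consequently exactly one of the two values $\u{\delta}^{\to i},\u{\gamma}^{\to i}$ is positive.

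Once this sign-flip is in hand, I would dispose of irreflexivity, asymmetry, and totality almost at once. Irreflexivity is built into the definition through the clause $\u{\delta}\ne\u{\gamma}$. Asymmetry is immediate too: if both $\u{\delta}<\u{\gamma}$ and $\u{\gamma}<\u{\delta}$ held, the maximal differing index would be the same $i$ and the definition would require both $\u{\delta}^{\to i}$ and $\u{\gamma}^{\to i}$ to be positive, contradicting the sign flip. For totality on $\SubEx(\u{s},w)$, any two distinct elements share a target $w$, so the sign-flip forces exactly one positive value among $\u{\delta}^{\to i},\u{\gamma}^{\to i}$ at the maximal differing position, yielding exactly one of the relations $\u{\delta}<\u{\gamma}$ or $\u{\gamma}<\u{\delta}$.

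The only step requiring an actual case analysis is transitivity. I would start with $\u{\gamma_1}<\u{\gamma_2}$ with maximal differing position $i$ and $\u{\gamma_2}<\u{\gamma_3}$ with maximal differing position $j$, and split on the comparison of $i$ and $j$. If $i>j$ then $\u{\gamma_2}$ and $\u{\gamma_3}$ agree past $j$ and in particular at $i$, so $\gamma_{3,i}=\gamma_{2,i}\ne\gamma_{1,i}$; combined with agreement of $\u{\gamma_1},\u{\gamma_2}$ past $i$, this shows $\u{\gamma_1}$ and $\u{\gamma_3}$ differ maximally at $i$. Cancelling the common tail past $j$ in $\u{\gamma_2}^{\max}=\u{\gamma_3}^{\max}$ produces $\u{\gamma_2}^{<(j+1)}=\u{\gamma_3}^{<(j+1)}$ as group elements, and right-multiplying by the common word $s_{j+1}^{\gamma_{2,j+1}}\cdots s_{i-1}^{\gamma_{2,i-1}}$ gives $\u{\gamma_2}^{<i}=\u{\gamma_3}^{<i}$, hence $\u{\gamma_3}^{\to i}=\u{\gamma_2}^{\to i}>0$ and $\u{\gamma_1}<\u{\gamma_3}$ as required. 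The case $i<j$ is symmetric, yielding $\u{\gamma_1}<\u{\gamma_3}$ with maximal differing index $j$. The case $i=j$ is ruled out by the sign-flip itself, since it would force $\u{\gamma_3}^{\to i}=-\u{\gamma_2}^{\to i}<0$, contradicting $\u{\gamma_2}<\u{\gamma_3}$.

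The main obstacle is really just the opening sign-flip identity: setting it up correctly, keeping track of which of $\gamma_i,\delta_i$ is $0$ and which is $1$, is the only place any care is needed. Everything else is mechanical, with the mildest subtlety being the group-element equality $\u{\gamma_2}^{<i}=\u{\gamma_3}^{<i}$ in the transitivity case $i>j$, which must not be confused with equality of the two subexpressions at positions below $j$ (they may still disagree there).
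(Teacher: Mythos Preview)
Your proof is correct and follows essentially the same approach as the paper. The sign-flip identity you establish first is exactly the observation the paper records in the paragraph preceding the lemma (``$\u{\delta}^{<i}=\u{\gamma}^{<i}s_i$, therefore an equivalent condition is $\u{\delta}^{\to i}<0$''), and your three-case transitivity argument matches the paper's; the only cosmetic difference is that in your case $i>j$ you verify $\u{\gamma_3}^{\to i}=\u{\gamma_2}^{\to i}>0$ via the group-element equality $\u{\gamma_2}^{<i}=\u{\gamma_3}^{<i}$, whereas the paper uses the equivalent condition $\u{\gamma_1}^{\to i}<0$ directly from the sign-flip, which is slightly quicker.
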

\begin{proof}
By the discussion above, we need only to prove the transitivity condition.
Let $\u{\sigma}<\u{\delta}<\u{\gamma}$, where $\u{\sigma}$ and $\u{\delta}$
differ maximally at $j$ and $\u{\delta}$ and $\u{\gamma}$ differ maximally at $i$.
If $j<i$, then $\u{\gamma}$ and $\u{\sigma}$ differ maximally at $i$, whence $\u{\gamma}>\u{\sigma}$
by $\u{\gamma}^{\to i}>0$. If $i<j$, then $\u{\gamma}$ and $\u{\sigma}$ differ maximally at $j$,
whence $\u{\gamma}>\u{\sigma}$ by $\u{\sigma}^{\to j}<0$. Finally, the case $i=j$ is impossible,
as it would imply that $\u{\delta}^{\to i}>0$ and $\u{\delta}^{\to i}<0$ hold simultaneously.
\end{proof}

\subsection{Galleries}\label{Galleries}
In this section, we keep the notation of the previous sections.

\begin{definition}\label{gallery}
A (labelled) gallery is a sequence
$$
\u{\Gamma}=(C_1,L_1,C_2,L_2,\ldots,C_m,L_m,C_{m+1}),
$$
where $C_1,\ldots,C_{m+1}$ are chambers and $L_1,\ldots,L_m$ are walls
such that $C_i$ and $C_{i+1}$ are connected through $L_i$ for any $i=1,\ldots,m$.
We say that $\u{\Gamma}$ begins at $C_1$ and ends at $C_{m+1}$ and
that $C_i$ is its $i$th chamber and $L_i$ is its $i$th wall. We denote $|\u{\Gamma}|=m$.
\end{definition}
\noindent
The difference between this definition and the definition of galleries in~\cite{CC},~\cite{G},~\cite{MST}
is that we use walls instead of panels. We will omit the adjective ``labelled''.

\begin{lemma}\label{lemma:20}
Let $\u{\Gamma}=(C_1,L_1,C_2,L_2,\ldots,C_m,L_m,C_{m+1})$ be a gallery,
$L$ be a wall and $i,j$ be indices such that $1\le i<j\le m+1$ and
$C_i\not\sim_LC_j$.
Then there exists an index $k$ such that $i\le k<j$, $L_k=L$ and
$C_k\notsim_LC_{k+1}$
but
$C_i\sim_LC_k$
and
$C_{k+1}\sim_LC_j$
\end{lemma}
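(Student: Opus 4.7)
The plan is to walk along the gallery from $C_i$ to $C_j$ and locate the first index where the side of $L$ changes. Concretely, consider the set
$$
K=\{t\suchthat i\le t\le j,\; C_i\sim_L C_t\}.
$$
Every chamber is disjoint from any wall by Lemma~\ref{lemma:16}, so each $\sim_L$-comparison in sight is well-defined. The set $K$ contains $i$ (reflexivity of $\sim_L$) and does not contain $j$ (by hypothesis $C_i\notsim_LC_j$); I let $k$ be the maximum of $K$, so that $i\le k<j$, $C_i\sim_LC_k$, and $C_i\notsim_LC_{k+1}$. This gives two of the four required conclusions immediately.

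Next I would show $C_k\notsim_LC_{k+1}$. If instead $C_k\sim_LC_{k+1}$, then the fact that $\sim_L$ is an equivalence relation (transitivity), together with $C_i\sim_LC_k$, would force $C_i\sim_LC_{k+1}$, contradicting the maximality of $k$. With $C_k\notsim_LC_{k+1}$ in hand, the wall $L$ separates the chambers $C_k$ and $C_{k+1}$. Since by the definition of a gallery these two chambers are already connected through $L_k$, Lemma~\ref{corollary:12} (and, in the dihedral setting, its counterpart Lemma~\ref{corollary:12:scr}) yields $L_k=L$.

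Finally, I would deduce $C_{k+1}\sim_LC_j$ from the separation identity~(\ref{eq:sep:3}). Applied with $X=C_{k+1}$, $Y=C_i$, $Z=C_j$, the two facts $C_{k+1}\notsim_LC_i$ (by the choice of $k$) and $C_i\notsim_LC_j$ (the hypothesis) give $C_{k+1}\sim_LC_j$, completing the list of required properties of $k$.

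There is no real obstacle here; the argument is essentially a first-crossing lemma for an equivalence relation along a discrete path. The only step that requires a nontrivial input from earlier is the identification $L_k=L$, which relies on the ``at most one separating wall for connected chambers'' statement Lemma~\ref{corollary:12}; everything else is a bookkeeping exercise using that $\sim_L$ is an equivalence and that chambers avoid walls.
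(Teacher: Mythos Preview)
Your proof is correct and follows essentially the same approach as the paper: take $k$ maximal with $C_i\sim_L C_k$, deduce $C_k\notsim_L C_{k+1}$ by transitivity, invoke Lemma~\ref{corollary:12} (or Lemma~\ref{corollary:12:scr}) to identify $L_k=L$, and use~(\ref{eq:sep:3}) to get $C_{k+1}\sim_L C_j$. Your write-up is in fact slightly more explicit than the paper's, which states $C_k\notsim_L C_{k+1}$ without spelling out the transitivity step.
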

\begin{proof}
Let $k$ be the maximal index such that $i\le k<j$ such that
$C_i\sim_LC_k$.
Therefore, $C_i\notsim_L C_{k+1}$ and $C_k\notsim_L C_{k+1}$.
Hence $L=L_k$ by Lemma~\ref{corollary:12}. 
The relation $C_{k+1}\sim_L C_j$ follows from
$C_i\notsim_LC_{k+1}$
and
$C_i\notsim_LC_j$
by~(\ref{eq:sep:3}).
\end{proof}

The following result is a version of~\cite[Lemma 4.8]{GSch}. Its proof follows from
Proposition~\ref{proposition:2}.

\begin{proposition}\label{proposition:3}
The set of all subexpressions of all expressions in $S$ is bijectively mapped to the set of all galleries beginning at $C$ by
$$
\u{\gamma}\mapsto\u{\Gamma}=(\u{\gamma}^{<1}C,H_{\u{\gamma}^{\to1}},\u{\gamma}^{<2}C,H_{\u{\gamma}^{\to2}},\ldots,\u{\gamma}^{\max}C).
$$
Thus for any $w\in W$, the set of all subexpressions with target $w$ of all expressions in $S$
is bijectively mapped to the set of all galleries beginning at $C$
and ending at $w\,C$.
\end{proposition}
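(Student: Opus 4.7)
The plan is to verify that the given map is well-defined, and then to invert it explicitly using Proposition~\ref{proposition:2}. For well-definedness we must check, for each $i$, that the consecutive chambers $\u{\gamma}^{<i}C$ and $\u{\gamma}^{<i+1}C$ are connected through $H_{\u{\gamma}^{\to i}}$. Because $H_\alpha = H_{-\alpha}$, the wall in question equals $H_{\u{\gamma}^{<i}e_{s_i}}$, and by~(\ref{eq:b}) and~(\ref{eq:tesi}) the corresponding reflection is $\t_{\u{\gamma}^{<i}e_{s_i}} = \u{\gamma}^{<i}\,s_i\,(\u{\gamma}^{<i})^{-1}$. Setting $w = \u{\gamma}^{<i}$, $s = s_i$, and $w' = \u{\gamma}^{<i+1}$, we have $w' = w$ when $\gamma_i = 0$ and $w' = ws$ when $\gamma_i = 1$. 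Proposition~\ref{proposition:2} then produces exactly the required connection.

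For the inverse, I would start from a gallery $(C_1,L_1,\ldots,C_m,L_m,C_{m+1})$ with $C_1 = C$ and reconstruct a unique pair $(\u{s},\u{\gamma})$ inductively. Proposition~\ref{proposition:4} with $X = X' = \emptyset$ shows that the map $w \mapsto wC$ is injective on $W$, so each $C_i$ determines a unique $w_i \in W$ with $C_i = w_iC$, and in particular $w_1 = 1 = \u{\gamma}^{<1}$. Given $w_i$ and $w_{i+1}$, Proposition~\ref{proposition:2} produces some $s \in S$ with $L_i = H_{w_isw_i^{-1}}$ and $w_{i+1} \in \{w_i, w_is\}$; uniqueness of this $s$ follows because distinct simple reflections give distinct conjugates (hence distinct walls) under the same $w_i$, and the two subcases $w_{i+1} = w_i$ versus $w_{i+1} = w_is$ are mutually exclusive since $s \ne 1$. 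Define $s_i = s$ and set $\gamma_i = 0$ or $1$ accordingly; then by induction $w_{i+1} = w_is_i^{\gamma_i} = \u{\gamma}^{<i+1}$, and tracing through the original formula confirms that $(\u{s},\u{\gamma})$ maps back to the prescribed gallery. Thus the map is a bijection.

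For the final clause, note that the last chamber of the image gallery is $\u{\gamma}^{\max}C = \u{s}^{\u{\gamma}}C$, and by the same injectivity of $w \mapsto wC$, this equals $wC$ iff $\u{s}^{\u{\gamma}} = w$, that is, iff $\u{\gamma}$ has target $w$. The argument is not genuinely hard; the only delicate point is disentangling the two pieces of data (the expression $\u{s}$ and the subexpression $\u{\gamma}$) that a gallery simultaneously encodes, and keeping track that the reconstructed $s_i$ indeed belongs to $S$ rather than merely being a conjugate reflection. The dihedral variant from Section~\ref{Linear dihedral groups} is handled by the identical argument, with Proposition~\ref{corollary:8} and Proposition~\ref{proposition:W} replacing Propositions~\ref{proposition:2} and~\ref{proposition:4}.
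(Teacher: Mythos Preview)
Your proof is correct and follows essentially the same approach as the paper, which simply states that the result follows from Proposition~\ref{proposition:2}. You have supplied the details the paper omits: verifying well-definedness via Proposition~\ref{proposition:2}, constructing the inverse by reading off $s_i$ and $\gamma_i$ from the gallery data (using Proposition~\ref{proposition:4} for injectivity of $w\mapsto wC$ and the fact that $H_t=H_r$ implies $t=r$), and checking that the final chamber encodes the target.
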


In what follows, we will always mean this connection between small and capital underlined Greek letters,
the former meaning subexpressions and the latter the corresponding galleries starting at the fundamental chamber.

In the above propositions, the sign of the root $\u{\gamma}^{\to i}$ can be ignored.
However, it delivers an important information about how the gallery $\u{\Gamma}$ approaches
its $i$th wall.

\begin{lemma}\label{lemma:to}
Let $\u{\gamma}\subset\u{s}$ and $\u{\Gamma}$ be the corresponding gallery.
The $i$th wall of $\u{\Gamma}$ separates the fundamental chamber from
the $i$th chamber of $\u{\Gamma}$ if and only if $\u{\gamma}^{\to i}>0$.
\end{lemma}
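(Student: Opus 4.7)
The plan is to unwind the identifications made in Propositions~\ref{proposition:3} and~\ref{proposition:3.5} and then apply the separation criterion from Corollary~\ref{corollary:11} (or its dihedral analogue Lemma~\ref{corollary:11:scr}). Explicitly, the $i$th wall of $\u{\Gamma}$ is $H_{\u{\gamma}^{\to i}}$ (respectively $\H_{\u{\gamma}^{\to i}}$) and its $i$th chamber is $\u{\gamma}^{<i}C$ (respectively $\u{\gamma}^{<i}\mathscr C$), while the fundamental chamber is $1\cdot C$ (respectively $1\cdot \mathscr C$). So the question is exactly whether $H_{\u{\gamma}^{\to i}}$ separates the chambers indexed by $1$ and $\u{\gamma}^{<i}$.

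By Corollary~\ref{corollary:11} (or Lemma~\ref{corollary:11:scr}), this happens if and only if the roots $1^{-1}\cdot\u{\gamma}^{\to i}=\u{\gamma}^{\to i}$ and $(\u{\gamma}^{<i})^{-1}\u{\gamma}^{\to i}$ have different signs. Using the definition~(\ref{eq:less_to}), namely $\u{\gamma}^{\to i}=\u{\gamma}^{<i}(-e_{s_i})$, the second root simplifies to
$$
(\u{\gamma}^{<i})^{-1}\u{\gamma}^{\to i}=(\u{\gamma}^{<i})^{-1}\u{\gamma}^{<i}(-e_{s_i})=-e_{s_i},
$$
which is always a negative root (since $e_{s_i}$ is either a simple root of the geometric representation or one of the properly situated roots $\alpha,\beta$, hence positive).

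Therefore the two roots $\u{\gamma}^{\to i}$ and $-e_{s_i}$ have different signs if and only if $\u{\gamma}^{\to i}>0$, which is exactly the claim of the lemma. There is essentially no obstacle here: the lemma is a bookkeeping statement that packages the definitions of $\u{\gamma}^{\to i}$ and $\u{\gamma}^{<i}$ against the already established separation criterion, and the only thing one must be mindful of is to treat uniformly both settings (Coxeter group and linear dihedral group), which is handled by citing Corollary~\ref{corollary:11} in the former case and Lemma~\ref{corollary:11:scr} in the latter.
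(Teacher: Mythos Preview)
Your proof is correct and follows essentially the same approach as the paper's own proof: identify the $i$th wall and chamber via Propositions~\ref{proposition:3}/\ref{proposition:3.5}, compute $(\u{\gamma}^{<i})^{-1}\u{\gamma}^{\to i}=-e_{s_i}<0$, and apply Corollary~\ref{corollary:11} (resp.\ Lemma~\ref{corollary:11:scr}).
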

\begin{proof}
Let $\alpha=\u{\gamma}^{\to i}$. Then the $i$th wall is $H_\alpha$ and the $i$th chamber is $\u{\gamma}^{<i}C$.
As
$$
1^{-1}\alpha=\u{\gamma}^{\to i},\quad (\u{\gamma}^{<i})^{-1}\alpha=-\v_{s_i}<0,
$$
the result follows from Corollary~\ref{corollary:11}.
\end{proof}

\begin{corollary}\label{corolary:0}
Let $\u{\gamma}\subset\u{s}$ be such that $\u{\gamma}^{\to j}>0$.
Then there exists an index $k<j$ such that $\u{\gamma}^{\to k}=-\u{\gamma}^{\to j}$.
\end{corollary}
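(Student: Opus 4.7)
The plan is to translate everything into the language of the gallery $\u{\Gamma}$ associated with $\u{\gamma}$ via Proposition~\ref{proposition:3} or Proposition~\ref{proposition:3.5}, and then apply the separation-tracking lemma (Lemma~\ref{lemma:20}) to locate the desired index $k$. I want $k$ to be the index at which $\u{\Gamma}$ actually ``crosses back'' over the wall $H_{\u{\gamma}^{\to j}}$ for the first time when reading positions from $j$ down towards $1$.

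First I would use the hypothesis $\u{\gamma}^{\to j}>0$ and Lemma~\ref{lemma:to} to conclude that the $j$th wall $L_j=H_{\u{\gamma}^{\to j}}$ separates the fundamental chamber $C_1$ (which is $C$ or $\mathscr{C}$) from the $j$th chamber $C_j=\u{\gamma}^{<j}C_1$. In the notation of Section~\ref{Circle}/Section~\ref{Galleries}, this is exactly $C_1\not\sim_{L}C_j$ for $L:=H_{\u{\gamma}^{\to j}}$.

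Next, I would apply Lemma~\ref{lemma:20} to the gallery $\u{\Gamma}$, the wall $L=H_{\u{\gamma}^{\to j}}$ and the indices $1<j$. This produces an index $k$ with $1\le k<j$ such that $L_k=L$ and $C_1\sim_L C_k$ (the other separation relations will not be needed). Since by construction the $k$th wall of $\u{\Gamma}$ is $H_{\u{\gamma}^{\to k}}$, the equality $L_k=H_{\u{\gamma}^{\to j}}$ gives $H_{\u{\gamma}^{\to k}}=H_{\u{\gamma}^{\to j}}$, hence $\u{\gamma}^{\to k}=\pm\,\u{\gamma}^{\to j}$ (walls determine roots up to sign). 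Now I invoke Lemma~\ref{lemma:to} in the opposite direction: since $L_k$ does \emph{not} separate $C_1$ from $C_k$, the root $\u{\gamma}^{\to k}$ cannot be positive, so $\u{\gamma}^{\to k}<0$. Combined with the previous equality and the positivity of $\u{\gamma}^{\to j}$, this forces $\u{\gamma}^{\to k}=-\u{\gamma}^{\to j}$, as desired.

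There is no real obstacle here; the whole corollary is a clean geometric restatement once Lemma~\ref{lemma:to} is applied in both directions and Lemma~\ref{lemma:20} is used to pinpoint the wall-crossing. The only mild point of care is that Lemma~\ref{lemma:20} must be applied with the indices $(1,j)$ (not $(i,j)$ with $i>1$) so that the output index $k$ automatically satisfies $k<j$, which is precisely the bound required by the statement.
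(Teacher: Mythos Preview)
Your proof is correct and follows essentially the same approach as the paper: use Lemma~\ref{lemma:to} to get $C_1\not\sim_{L_j}C_j$, apply Lemma~\ref{lemma:20} with $i=1$ to obtain $k<j$ with $L_k=L_j$ and $C_1\sim_{L_k}C_k$, then use Lemma~\ref{lemma:to} again to conclude $\u{\gamma}^{\to k}<0$ and hence $\u{\gamma}^{\to k}=-\u{\gamma}^{\to j}$.
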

\begin{proof}
Let $\u{\Gamma}=(C_1,L_1,C_2,L_2,\ldots,C_m,L_m,C_{m+1})$ be the gallery corresponding to $\u{\gamma}$.
By Lemma~\ref{lemma:to}, the wall $L_j$ separates $C_1$ and $C_j$.
Let $k$ be an index as in Lemma~\ref{lemma:20} for $i=1$ and $L=L_j$.
In view of $L_k=L=L_j$, we get $\u{\gamma}^{\to k}=\pm\u{\gamma}^{\to j}$.
As $L_k$ does not separate $C_k$ from the fundamental chamber $C_1$, we get $\u{\gamma}^{\to k}<0$
by Lemma~\ref{lemma:to}. 
\end{proof}


\begin{corollary}\label{corolary:00}
Let $\u{\gamma}$ be a subexpressions of $\SubEx(\u{s},w)$ distinct from the minimal subexpression of this set.
Then there are indices $i$ and $j$ such that $\f_{i,j}\u{\gamma}<\u{\gamma}$.
\end{corollary}
\begin{proof}
Let $\u{\gamma}$ and the minimal subexpression $\u{\delta}$ of $\SubEx(\u{s},w)$
differ maximally at $j$. As $\u{\gamma}>\u{\delta}$, we get $\u{\gamma}^{\to j}>0$.
By Corollary~\ref{corolary:0}, there is some $i<j$
such that $\u{\gamma}^{\to i}=-\u{\gamma}^{\to j}$. By Proposition~\ref{proposition:applicable},
the operator $\f_{i,j}$ is applicable to $\u{\gamma}$. Hence $\f_{i,j}\u{\gamma}<\u{\gamma}$,
as $\u{\gamma}$ and $\f_{i,j}\u{\gamma}$ differ maximally at~$j$.
\end{proof}

Now we can proof the first result declared in the introduction.

\begin{proof}[Proof of Theorem~\ref{theorem:connected}]
It suffices to assume that the set of vertices of $\Sub(\u{s},w)$ is not empty. 
Let $\u{\delta}$ be the minimal of them.
By Corollary~\ref{corolary:00}, there is a path from any vertex of $\Sub(\u{s},w)$ to $\u{\delta}$.
Therefore, any two vertices are connected by a path (through $\u{\delta}$).
\end{proof}

\begin{definition}\label{def:gallery_double_fold}
Let $\u{\Gamma}$ be a gallery denoted as in Definition~\ref{gallery} and
$i,j$ be indices such that $1\le i<j\le m$ and
$L_i=L_j$ be $t$-walls. Then we define
$$
\f_{i,j}\u{\Gamma}=(C_1,L_1,\ldots,C_i,L_i,tC_{i+1},tL_{i+1}\ldots,tC_j,L_j,C_{j+1},\ldots,L_m,C_{m+1}).
$$
We call $\f_{i,j}\u{\Gamma}$ the double fold of $\u{\Gamma}$ at $i$ and $j$ and say that
it is towards the fundamental chamber if $L_i$ does not separate the chambers $tC_{i+1},\ldots,tC_j$
from the fundamental chamber.
\end{definition}
\noindent
By Proposition~\ref{proposition:2},
the sequence $\f_{i,j}\u{\Gamma}$ is a well-defined gallery.

\begin{proposition}\label{proposition:cycb6:6}
Let $\u{\Gamma}$ be a gallery and $i,j$ be indices. The folding operator $\f_{i,j}$
is applicable to $\u{\Gamma}$ if and only if it is applicable to $\u{\gamma}$.
If this is the case, let $\u{\Delta}=\f_{i,j}\u{\Gamma}$. Then $\u{\delta}=\f_{i,j}\u{\gamma}$.
If, moreover, $\u{\Delta}$ is a double fold of $\u{\Gamma}$ towards the fundamental chamber,
then $\u{\gamma}>\u{\delta}$.
\end{proposition}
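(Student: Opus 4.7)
The plan is to translate the statement into the subexpression language via the bijections of Propositions~\ref{proposition:3} and~\ref{proposition:3.5}, and then to handle the three sub-claims in turn. For the applicability equivalence, I would use that under the bijection the $i$th wall of $\u{\Gamma}$ is $H_{\u{\gamma}^{\to i}}$, and that an equality $H_\alpha=H_\beta$ of walls forces $\alpha=\pm\beta$. Hence $L_i=L_j$ if and only if $\u{\gamma}^{\to i}=\pm\u{\gamma}^{\to j}$, which is exactly condition~\ref{proposition:applicable:ii} of Proposition~\ref{proposition:applicable}.

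Next, assuming applicability, I would prove the identification $\u{\delta}\=\f_{i,j}\u{\gamma}$ by checking that the gallery attached to $\f_{i,j}\u{\gamma}$ matches $\f_{i,j}\u{\Gamma}$ term by term. Writing $t=\t_{\u{\gamma}^{\to i}}$ for the reflection fixing the common wall, the formulas in~(\ref{eq:33}) give, for $i<k\le j$, the $k$th chamber $t\,\u{\gamma}^{<k}C=tC_k$ and the $k$th wall $H_{t\u{\gamma}^{\to k}}=tL_k$, while for indices outside this range the chambers and walls of the gallery attached to $\u\gamma$ are unchanged. The only point deserving a separate remark is the $j$th wall: there $t\u{\gamma}^{\to j}=\pm\u{\gamma}^{\to j}$, hence $H_{t\u{\gamma}^{\to j}}=L_j$, consistent with $L_j$ being kept unchanged in Definition~\ref{def:gallery_double_fold}. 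This term-by-term comparison is a direct calculation.

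Third, for the order claim, I would note that $\u{\delta}=\f_i\f_j\u{\gamma}$ differs from $\u{\gamma}$ at exactly positions $i$ and $j$, so they differ maximally at $j$. The hypothesis "towards the fundamental chamber" says that $L_j=L_i$ does not separate $tC_j$ from the fundamental chamber $C$. Since $C_j$ is disjoint from $L_j$ by Lemma~\ref{lemma:16} and $tC_j$ is its reflection through $L_j$, the chambers $C_j$ and $tC_j$ lie strictly on opposite sides of $L_j$; consequently $L_j$ does separate $C_j$ from $C$. By Lemma~\ref{lemma:to} this is $\u{\gamma}^{\to j}>0$, and so $\u{\gamma}>\u{\delta}$ by the definition of the order in Section~\ref{Orders}.

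The main obstacle I anticipate is purely bookkeeping: keeping straight which walls get conjugated by $t$ under the fold and which do not, and handling the sign ambiguity in $\u{\gamma}^{\to i}=\pm\u{\gamma}^{\to j}$ cleanly so that the $j$th wall is correctly identified with $L_j$ itself and not with some conjugate $tL_j$. Once the wall--root dictionary is firmly established, each part reduces to quoting a formula from~(\ref{eq:33}), the bijection with galleries, or Lemma~\ref{lemma:to}.
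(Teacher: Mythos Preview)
Your proposal is correct and follows essentially the same route as the paper: applicability via Proposition~\ref{proposition:applicable}\ref{proposition:applicable:ii}, the identification $\u{\delta}=\f_{i,j}\u{\gamma}$ via the formulas~(\ref{eq:33}), and the order claim via Lemma~\ref{lemma:to} after observing that $C_j$ and $tC_j$ lie on opposite sides of $L_j$. The paper's proof is terser but uses exactly these ingredients.
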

\begin{proof}
The applicability claim follows directly from Proposition~\ref{proposition:applicable}.
The equality $\u{\delta}=\f_{i,j}\u{\gamma}$ is an easy consequence of~(\ref{eq:33}).
Finally, suppose that $\u{\Delta}$ is a double fold of $\u{\Gamma}$ towards the fundamental chamber.
Then the $j$th wall of $\u{\Gamma}$ separates the $j$th chamber of $\u{\Gamma}$ from the $j$th chamber of $\u{\Delta}$
and does not separate the latter chamber from the fundamental chamber.
By Lemma~\ref{lemma:to}, we get $\u{\gamma}^{\to j}>0$. Therefore, $\u{\gamma}>\u{\delta}$.
\end{proof}

\begin{remark}\rm\label{rm:leftarrow}
The reader may wonder why we have Lemma~\ref{lemma:to} and not a lemma telling us
when the $i$th wall of $\u{\Gamma}$ separates the fundamental chamber
from the $i+1$th chamber.
Such a result also holds and states that these chambers are separated if and only if
$\u{\gamma}^{\leftarrow i}=\u{\gamma}^{<{i+1}}(-\v_{s_i})>0$.
So the signs of the roots $\u{\gamma}^{\to i}$ and $\u{\gamma}^{\leftarrow i}$ show how $\u{\Gamma}$
approaches and goes away from the $i$th wall from the point of view of the fundamental chamber.
\end{remark}

\subsection{Special pairs} We will use the following type of pairs.

\begin{definition}\label{definition:cycb6:3} Let $\u{\gamma}\subset\u{s}$. A pair of indices
$(i,j)$ is special for $\u{\gamma}$ if
$1\le i<j\le|\u{s}|$ and
{\renewcommand{\labelenumi}{{\it(\roman{enumi})}}
\renewcommand{\theenumi}{{\rm(\roman{enumi})}}
\begin{enumerate}
\itemsep=5pt
\item\label{definition:cycb6:3:p:i} $\u{\gamma}^{\to j}=-\u{\gamma}^{\to i}>0$;
\item\label{definition:cycb6:3:p:ii} there is no $k$ such that $k<i$ and $\u{\gamma}^{\to k}=\u{\gamma}^{\to j}$;
\item\label{definition:cycb6:3:p:iii} there is no $k$ such that $i<k<j$ such that $\u{\gamma}^{\to k}=\u{\gamma}^{\to i}$.
\end{enumerate}}
\end{definition}

\begin{lemma}\label{lemma:cycl:2}
Let $\u{\gamma}\subset\u{s}$ and
$
\u{\Gamma}=(C_1,L_1,\ldots,C_m,L_m,C_{m+1})
$
be the corresponding gallery. Let $(i,j)$ be a special pair for $\u{\gamma}$.
Then the wall $L_i$ does not separate the chambers $C_1,\ldots,C_i$
and separates $C_1$ from any chamber $C_{i+1},\ldots,C_j$.
\end{lemma}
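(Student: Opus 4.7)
The plan is to argue each half of the lemma by contradiction, using Lemma~\ref{lemma:to} to translate chamber-side information into signs of the roots $\u{\gamma}^{\to k}$, and using one application of Lemma~\ref{lemma:20} in each case to locate an index $l$ whose existence contradicts condition~(ii) or~(iii) of Definition~\ref{definition:cycb6:3}. The crucial preliminary observation is that $L_i=L_j$ as walls, since $\u{\gamma}^{\to j}=-\u{\gamma}^{\to i}$ by~(i); then Lemma~\ref{lemma:to} tells us that $C_i$ lies on the $C_1$-side of $L_i$ (because $\u{\gamma}^{\to i}<0$) while $C_j$ lies on the opposite side (because $\u{\gamma}^{\to j}>0$).

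For the first half, I would suppose some $C_k$ with $k\le i$ lies on the side of $L_i$ opposite to $C_1$. Then $k<i$ (since $C_i$ is already on the $C_1$-side), and in particular $L_i$ separates $C_k$ from $C_i$. Applying Lemma~\ref{lemma:20} to the pair of indices $k<i$ with wall $L=L_i$ produces an index $l$ with $k\le l<i$, $L_l=L_i$, $C_l$ on the opposite side of $L_i$ from $C_1$, and $C_{l+1}$ on the $C_1$-side. Lemma~\ref{lemma:to} applied at step $l$ then gives $\u{\gamma}^{\to l}>0$; combined with $L_l=L_i$ (so $\u{\gamma}^{\to l}=\pm\u{\gamma}^{\to i}$) this forces $\u{\gamma}^{\to l}=-\u{\gamma}^{\to i}=\u{\gamma}^{\to j}$. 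Since $l<i$, this violates~(ii).

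For the second half, the case $k=j$ is immediate from Lemma~\ref{lemma:to}. For $i+1\le k<j$, I would suppose $L_i$ does not separate $C_1$ from $C_k$; then $C_k$ and $C_1$ share a side of $L_i$ while $C_j$ is on the other, so $L_i$ separates $C_k$ from $C_j$. Lemma~\ref{lemma:20} applied to the pair of indices $k<j$ with wall $L=L_i$ produces an index $l$ with $k\le l<j$, $L_l=L_i$, $C_l$ on the $C_1$-side of $L_i$, and $C_{l+1}$ on the opposite side. Lemma~\ref{lemma:to} gives $\u{\gamma}^{\to l}<0$, and combined with $L_l=L_i$ this forces $\u{\gamma}^{\to l}=\u{\gamma}^{\to i}$. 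Since $i<k\le l<j$, this violates~(iii).

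The main obstacle is choosing the input pair for Lemma~\ref{lemma:20} correctly in each case so that the index $l$ it returns lands in the range constrained by~(ii) or~(iii), with the sign of $\u{\gamma}^{\to l}$ matching the forbidden value. Once the pairs $(k,i)$ for the first half and $(k,j)$ for the second are chosen, both arguments reduce to essentially the same sign-contradiction via Lemma~\ref{lemma:to}. I would also implicitly use the fact from Lemma~\ref{lemma:16} that chambers lie off their walls, so the equivalence relation $\sim_{L_i}$ is defined at every chamber $C_k$ appearing in the argument.
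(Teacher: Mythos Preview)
Your proof is correct and follows essentially the same route as the paper's: both halves are argued by contradiction, using Lemma~\ref{lemma:to} to pin down which side of $L_i$ the chambers $C_i$ and $C_j$ lie on, then applying Lemma~\ref{lemma:20} once in each half to produce an index whose root sign (again via Lemma~\ref{lemma:to}) contradicts condition~\ref{definition:cycb6:3:p:ii} or~\ref{definition:cycb6:3:p:iii} of Definition~\ref{definition:cycb6:3}. The only cosmetic difference is that the paper treats the case $q=j$ uniformly (its argument yields $q<j$ automatically) rather than separating it out as you do.
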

\begin{proof}
We denote $\alpha=\u{\gamma}^{\to j}$ and $L=L_i=L_j$. We have $\alpha>0$ and $L=H_\alpha$.
Note that $C_1$ is the fundamental chamber.

Suppose first that $C_1\notsim_L C_q$ for some $q\le i$.
As $\u{\gamma}^{\to i}=-\alpha<0$, we get $C_1\sim_L C_i$
by Lemma~\ref{lemma:to}.
Therefore, $C_q\notsim_L C_i$, whence $q<i$.
By Lemma~\ref{lemma:20}, there exists an index $k$
such that $q\le k<i$ and $L_k=L$ and $C_k\notsim_L C_i$.
From the first equality it follows that $\u{\gamma}^{\to k}=\pm\alpha$.
To determine the sing, note that $C_1\notsim_L C_k$.
Therefore, $\u{\gamma}^{\to k}>0$ by Lemma~\ref{lemma:to}.
Hence $\u{\gamma}^{\to k}=\alpha=\u{\gamma}^{\to j}$,
which 
violates part~\ref{definition:cycb6:3:p:ii} of Definition~\ref{definition:cycb6:3}.

Suppose now that $C_1\sim_L C_q$ for some $q$ such that $i<q\le j$.
As $\u{\gamma}^{\to j}=\alpha>0$, we get $C_1\notsim_L C_j$ by Lemma~\ref{lemma:to}.
Therefore, $C_q\notsim_L C_j$, whence $q<j$.
By Lemma~\ref{lemma:20}, there exists an index $k$ such that $q\le k<j$, $L_k=L$ and $C_q\sim_LC_k$.
It follows from the first equality that $\u{\gamma}^{\to k}=\pm\alpha$.
To determine the sing, note that $C_1\sim_L C_k$.
Therefore, $\u{\gamma}^{\to k}<0$ by Lemma~\ref{lemma:to}.
Hence $\u{\gamma}^{\to k}=-\alpha=\u{\gamma}^{\to i}$,
which 
violates part~\ref{definition:cycb6:3:p:iii} of Definition~\ref{definition:cycb6:3}.
\end{proof}

\subsection{Categories}\label{Categories} Let $\D$ be a dihedral reflection subgroup of $W$
and $\Pi_\D=\{\alpha,\beta\}$ be the canonical simple system for $\D$.
It is easy to check that $\alpha$ and $\beta$ are linearly independent.
We consider the subspace $\mathscr V=\R\alpha\oplus\R\beta\subset V$.
This space is $\D$-invariant. Therefore, there is the restriction homomorphism $\rho:\D\to\GL(\mathscr V)$,
which happens to be injective. Its image 
is generated by the reflections
$A=\mathcal r_\alpha$ and $B=\mathcal r_\beta$, where $\mathcal r_v$ denotes
the reflection of $\mathscr V$ with respect to a unit vector $v\in\mathscr V$.
Here the bilinear form on $\mathscr V$ is the restriction of that on $V$.
Therefore, $\mathcal r_v$ is the restriction of $\r_v$ to $\mathscr V$.
We will identify $\D\cong\im\rho$.
Let
\begin{itemize}
\itemsep=3pt
\item $\u{s}$ be an expression in $A$ and $B$; 
\item $\u{t}$ be an expression in $A$ and $B$ or in $S$; 
\item $p:\{1,\ldots,|\u{s}|\}\to\{1,\ldots,|\u{t}|\}$ be an increasing embedding.
\end{itemize}

A pair $(\u{\ggamma},\u{\ddelta})$ of subexpressions of $\u{s}$ and $\u{t}$ respectively
is a called a {\it $p$-pair} if
for any $i=1,\ldots,|\u{s}|$, there exists $\epsilon_i=\pm1$ such that
\begin{equation}\label{eq:cyb1:cat:3}
\u{\ddelta}^{\to p(i)}=\epsilon_i\u{\ggamma}^{\to i}.
\end{equation}
We will call the sequence $\u{\epsilon}=(\epsilon_1,\ldots,\epsilon_{|\u{s}|})$ the {\it cosign}
of $(\u{\ggamma},\u{\ddelta})$. If $\u{\epsilon}=(1,\ldots,1)$, then we say that $(\u{\ggamma},\u{\ddelta})$
is a $p$-pair of {\it positive} cosign. Note that the notion of the {\it sign} also makes sense (see Remark~\ref{rm:leftarrow})
and can be developed as well as the notion of the cosign. We do not, however, need it for the proofs of the main results of this paper.

\begin{lemma}\label{lemma:cat:1}
If $(\u{\ggamma},\u{\ddelta})$ is a $p$-pair, then $(\f_k\u{\ggamma},\f_{p(k)}\u{\ddelta})$ is
a $p$-pair of the same cosign for any $k=1,\ldots,|\u{\ggamma}|$.
\end{lemma}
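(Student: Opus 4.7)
The plan is a direct case analysis based on formula~(\ref{eq:cyb1:cat:5}), splitting on whether the index $i$ at which we test the $p$-pair condition lies before or after the fold position $k$. Since $p$ is increasing (hence strictly monotone), the inequalities $i \le k$ and $i > k$ transfer automatically to $p(i) \le p(k)$ and $p(i) > p(k)$ respectively, so the two formulas in~(\ref{eq:cyb1:cat:5}) apply in parallel on both sides.

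First I would fix $k$ and check~(\ref{eq:cyb1:cat:3}) for $(\f_k\u{\gamma}, \f_{p(k)}\u{\delta})$ with the same $\epsilon_i$. For $i \le k$ (hence $p(i) \le p(k)$), both sides of~(\ref{eq:cyb1:cat:5}) reduce to the unaltered roots, giving
\[
(\f_{p(k)}\u{\delta})^{\to p(i)} \;=\; \u{\delta}^{\to p(i)} \;=\; \epsilon_i\,\u{\gamma}^{\to i} \;=\; \epsilon_i\,(\f_k\u{\gamma})^{\to i},
\]
by the hypothesis on $(\u{\gamma},\u{\delta})$. For $i > k$ (hence $p(i) > p(k)$), the formulas yield
\[
(\f_k\u{\gamma})^{\to i} = \t_{\u{\gamma}^{\to k}}\,\u{\gamma}^{\to i}, \qquad
(\f_{p(k)}\u{\delta})^{\to p(i)} = \t_{\u{\delta}^{\to p(k)}}\,\u{\delta}^{\to p(i)}.
\]
Substituting $\u{\delta}^{\to p(k)} = \epsilon_k\,\u{\gamma}^{\to k}$ and $\u{\delta}^{\to p(i)} = \epsilon_i\,\u{\gamma}^{\to i}$ from the $p$-pair hypothesis, I would then invoke the standard facts $\t_{-v} = \t_v$ (from~(\ref{eq:b})) and the linearity of $\t_v$ to pull the scalars through:
\[
\t_{\epsilon_k \u{\gamma}^{\to k}}(\epsilon_i\,\u{\gamma}^{\to i}) = \t_{\u{\gamma}^{\to k}}(\epsilon_i\,\u{\gamma}^{\to i}) = \epsilon_i\,\t_{\u{\gamma}^{\to k}}\u{\gamma}^{\to i} = \epsilon_i\,(\f_k\u{\gamma})^{\to i}.
\]
This exhausts both cases with the same sign sequence $\u{\epsilon}$, proving the claim.

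There is essentially no obstacle: the statement is a bookkeeping consequence of~(\ref{eq:cyb1:cat:5}) together with the two elementary facts that a reflection depends only on the line spanned by its root and acts linearly. The only thing to be slightly careful about is using that $p$ strictly preserves order, which is exactly what makes the case split $i \le k$ versus $i > k$ on the $\u{\gamma}$-side compatible with the split $p(i) \le p(k)$ versus $p(i) > p(k)$ on the $\u{\delta}$-side.
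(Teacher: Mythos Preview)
Your proof is correct and follows essentially the same approach as the paper: both apply formula~(\ref{eq:cyb1:cat:5}) together with the strict monotonicity of $p$ and the fact that $\t_{\pm v}=\t_v$. The paper merely compresses your two cases into a single line by introducing an exponent $\nu\in\{0,1\}$ recording whether $k<i$, writing $(\f_{p(k)}\u{\delta})^{\to p(i)}=\t_{\u{\delta}^{\to p(k)}}^{\nu}\u{\delta}^{\to p(i)}=\epsilon_i\t_{\u{\gamma}^{\to k}}^{\nu}\u{\gamma}^{\to i}=\epsilon_i(\f_k\u{\gamma})^{\to i}$.
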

\noindent
{\it Proof.}
Let $\u{\ggamma}\subset\u{s}$ and $\u{\ddelta}\subset\u{t}$ and $\u{\epsilon}$
be the cosign of $(\u{\ggamma},\u{\ddelta})$.
Let $\nu=1$ if $k<i$ (equivalently $p(k)<p(i)$) and $\nu=0$ otherwise.
Applying~(\ref{eq:cyb1:cat:5}) and~(\ref{eq:cyb1:cat:3}), we get
$$
\hspace{90pt}
(\f_{p(k)}\u{\ddelta})^{\to p(i)}=\r_{\u{\ddelta}^{\to p(k)}}^\nu\u{\ddelta}^{\to p(i)}
=\epsilon_i\r_{\u{\ggamma}^{\to k}}^\nu\u{\ggamma}^{\to i}
=\epsilon_i(\f_k\u{\ggamma})^{\to i}.
\hspace{90pt}\qed
$$

Let $\Expr_\D(W,S)$ be the category whose objects are all expressions in $\a$ and $\b$
and all expressions in $S$. Its morphisms are described as follows. Let $\u{s}$ be an expression in $\a$ and $\b$
and $\u{t}$ be an expression in $\a$ and $\b$ or in $S$.
A morphism from $\u{s}$ to $\u{t}$ is a pair $(p,\phi)$, where $p:\{1,\ldots,|\u{s}|\}\to\{1,\ldots,|\u{t}|\}$
is an increasing embedding and $\phi$ is a map $\SubEx(\u{s})\to\SubEx(\u{t})$
such that $(\u{\ggamma},\phi(\u{\ggamma}))$ is a $p$-pair and $\phi(\f_i\u{\ggamma})=\f_{p(i)}\phi(\u{\ggamma})$ for any $\u{\ggamma}\subset\u{s}$ and $i=1,\ldots,|\u{s}|$.

By Lemma~\ref{lemma:cat:1}, it suffices that $(\u{\ggamma},\phi(\u{\ggamma}))$ be a $p$-pair
just for one subexpression $\u{\ggamma}$. By the same lemma, the cosign of this pair does not depend on $\u{\ggamma}$.
We will call this cosign the {\it cosign of the morphism} $(p,\phi)$.

The composition of morphisms $(p,\phi):\u{s}\to\u{t}$ and $(q,\psi):\u{t}\to\u{r}$ is given by the rule
$$
(q,\psi)\circ(p,\phi)=(q\circ p,\psi\circ\phi).
$$
It is a well-defined morphism. Indeed, denoting by $\u{\epsilon}$ and $\u{\tau}$ the cosigns of our morphisms, we get
$$
\u{\ggamma}^{\to i}=\epsilon_i\phi(\u{\ggamma})^{\to p(i)}=\epsilon_i\tau_{p(i)}\,\psi\circ\phi(\u{\ggamma})^{\to q\circ p(i)}
,\;\;\;
\psi\circ\phi(\f_i(\u{\ggamma}))=\psi(\f_{p(i)}\phi(\u{\ggamma}))=\f_{q\circ p(i)}(\psi\circ\phi(\u{\ggamma}))
$$
for any $\u{\ggamma}\subset\u{s}$ and $i=1,\ldots,|\u{s}|$. The cosign of this composition is
$(\epsilon_1\tau_{p(1)},\epsilon_2\tau_{p(2)},\ldots,\epsilon_{|\u{s}|}\tau_{p(|\u{s}|)})$.
Hence we get the following result, which we will use to prove Theorem~\ref{theorem:main}.

\begin{proposition}\label{proposition:8}
The composition of morphisms in $\Expr_\D(W,S)$ of positive cosign is a morphism of positive cosign.
\end{proposition}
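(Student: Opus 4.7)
The plan is to simply read off the result from the formula for the cosign of a composition that was derived in the paragraph immediately preceding the proposition. That computation already establishes that whenever $(p,\phi)\colon\u{s}\to\u{t}$ has cosign $\u{\epsilon}$ and $(q,\psi)\colon\u{t}\to\u{r}$ has cosign $\u{\tau}$, the composition $(q\circ p,\psi\circ\phi)$ is itself a morphism in $\Expr_{\alpha,\beta}(W,S)$, and its cosign at position $i$ equals $\epsilon_i\tau_{p(i)}$.

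From this formula the proposition becomes immediate. Assuming both factors have positive cosign means that $\epsilon_i=1$ for every $i\in\{1,\ldots,|\u{s}|\}$ and $\tau_j=1$ for every $j\in\{1,\ldots,|\u{t}|\}$. Since $p$ is an increasing embedding of $\{1,\ldots,|\u{s}|\}$ into $\{1,\ldots,|\u{t}|\}$, every index $p(i)$ lies in the valid range, and hence $\tau_{p(i)}=1$, giving $\epsilon_i\tau_{p(i)}=1\cdot 1=1$ for each $i$. Thus the composition has positive cosign. There is no real obstacle to overcome here: the substantive work, namely verifying the product formula for the composed cosign and that the composition is a well-defined morphism, was already carried out just above the statement; the proposition is merely the observation that the cosign at position $i$ is a product of two entries of the input cosigns, so positivity is preserved under this multiplicative composition rule.
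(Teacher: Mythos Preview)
Your proposal is correct and matches the paper's approach exactly: the paper states the proposition as an immediate consequence of the cosign formula $(\epsilon_1\tau_{p(1)},\ldots,\epsilon_{|\u{s}|}\tau_{p(|\u{s}|)})$ derived in the preceding paragraph, which is precisely what you do.
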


\begin{lemma}\label{lemma:17}
Let $(p,\phi):\u{s}\to\u{t}$ be a morphism in $\Expr_\D(W,S)$ and $1\le i<j\le|s|$.\linebreak
The operator $\f_{i,j}$ is applicable to $\u{\ggamma}\subset\u{s}$
if and only if $\f_{p(i),p(j)}$ is applicable to $\phi(\u{\ggamma})$. In that case,
$\f_{p(i),p(j)}\phi(\u{\ggamma})=\phi(\f_{i,j}\u{\ggamma})$.
\end{lemma}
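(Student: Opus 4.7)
The plan is to deduce everything from Proposition~\ref{proposition:applicable}\ref{proposition:applicable:ii} together with the defining properties of a morphism in $\Expr_{\alpha,\beta}(W,S)$, namely the $p$-pair relation (\ref{eq:cyb1:cat:3}) and the compatibility $\phi\circ\f_k=\f_{p(k)}\circ\phi$.

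First I would dispatch the applicability claim. By Proposition~\ref{proposition:applicable}\ref{proposition:applicable:ii}, $\f_{i,j}$ is applicable to $\u{\gamma}$ iff $\u{\gamma}^{\to i}=\pm\u{\gamma}^{\to j}$, and $\f_{p(i),p(j)}$ is applicable to $\phi(\u{\gamma})$ iff $\phi(\u{\gamma})^{\to p(i)}=\pm\phi(\u{\gamma})^{\to p(j)}$. Writing $\u{\epsilon}$ for the cosign of $(p,\phi)$, relation (\ref{eq:cyb1:cat:3}) gives
\[
\phi(\u{\gamma})^{\to p(i)}=\epsilon_i\,\u{\gamma}^{\to i},\qquad \phi(\u{\gamma})^{\to p(j)}=\epsilon_j\,\u{\gamma}^{\to j}.
\]
Since $\epsilon_i,\epsilon_j\in\{\pm1\}$, the two conditions $\u{\gamma}^{\to i}=\pm\u{\gamma}^{\to j}$ and $\phi(\u{\gamma})^{\to p(i)}=\pm\phi(\u{\gamma})^{\to p(j)}$ are equivalent, giving the first assertion.

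Assuming applicability, the equality $\f_{p(i),p(j)}\phi(\u{\gamma})=\phi(\f_{i,j}\u{\gamma})$ will follow by applying the compatibility relation twice. Specifically, I would compute
\[
\phi(\f_{i,j}\u{\gamma})=\phi(\f_i\f_j\u{\gamma})=\f_{p(i)}\phi(\f_j\u{\gamma})=\f_{p(i)}\f_{p(j)}\phi(\u{\gamma})=\f_{p(i),p(j)}\phi(\u{\gamma}),
\]
where the first and last equalities use the definitions of $\f_{i,j}$ and $\f_{p(i),p(j)}$ as compositions of single folds (justified by the applicability just established on both sides), and the middle two equalities use the morphism property $\phi\circ\f_k=\f_{p(k)}\circ\phi$ (with Lemma~\ref{lemma:cat:1} implicitly ensuring that $(\f_j\u{\gamma},\phi(\f_j\u{\gamma}))$ is still a $p$-pair, so that the property remains applicable at the intermediate step).

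There is essentially no obstacle here: the lemma is a direct bookkeeping consequence of the definition of a morphism in $\Expr_{\alpha,\beta}(W,S)$ once Proposition~\ref{proposition:applicable}\ref{proposition:applicable:ii} is invoked to convert the definition of $\f_{i,j}$ being applicable into a condition purely about the values $\u{\gamma}^{\to i}$ and $\u{\gamma}^{\to j}$, which are exactly the quantities controlled up to sign by (\ref{eq:cyb1:cat:3}).
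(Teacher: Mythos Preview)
Your proof is correct and follows essentially the same approach as the paper. The paper's proof is terser: it notes that only the applicability equivalence needs argument (the equality $\f_{p(i),p(j)}\phi(\u{\gamma})=\phi(\f_{i,j}\u{\gamma})$ being immediate from the morphism axiom $\phi\circ\f_k=\f_{p(k)}\circ\phi$), and then observes exactly as you do that the equivalence $\u{\gamma}^{\to i}=\pm\u{\gamma}^{\to j}\Leftrightarrow\phi(\u{\gamma})^{\to p(i)}=\pm\phi(\u{\gamma})^{\to p(j)}$ follows from $(\u{\gamma},\phi(\u{\gamma}))$ being a $p$-pair. One small remark: your appeal to Lemma~\ref{lemma:cat:1} at the intermediate step is unnecessary, since the compatibility $\phi(\f_k\u{\delta})=\f_{p(k)}\phi(\u{\delta})$ is part of the \emph{definition} of a morphism and holds for every $\u{\delta}\subset\u{s}$, not just for a single $p$-pair.
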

\begin{proof}
We need to prove only the applicability claim, which in view of Proposition~\ref{proposition:applicable}
can be formulated as the following equivalence:
$$
\u{\ggamma}^{\to i}=\pm\u{\ggamma}^{\to j}\Longleftrightarrow\phi(\u{\ggamma})^{\to p(i)}=\pm\phi(\u{\ggamma})^{\to p(j)}.
$$
But this fact simply follows from the fact that $(\u{\ggamma},\phi(\u{\ggamma}))$ is a $p$-pair.
\end{proof}

\begin{lemma}\label{lemma:21}
Let $(p,\phi)$ be a morphism in $\Expr_\D(W,S)$ of positive cosign. Then $\phi$ preserves the order $<$.
\end{lemma}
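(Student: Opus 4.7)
Suppose $\u{\delta}<\u{\gamma}$ in $\SubEx(\u{s})$, and let $i$ be the maximal position at which they differ, so that by definition $\u{\delta}^{\max}=\u{\gamma}^{\max}$ and $\u{\gamma}^{\to i}>0$. The goal is to verify the three defining conditions of $\phi(\u{\delta})<\phi(\u{\gamma})$ in $\SubEx(\u{t})$: same target, distinctness, and positive $\to$-root at the maximal differing position.

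First I would pin down where $\phi(\u{\gamma})$ and $\phi(\u{\delta})$ disagree. Let $i_1<i_2<\cdots<i_k=i$ be all positions at which $\u{\gamma}$ and $\u{\delta}$ differ, so $\u{\delta}=\f_{i_1}\cdots\f_{i_k}\u{\gamma}$. Iterating the intertwining relation $\phi\circ\f_l=\f_{p(l)}\circ\phi$ from the definition of a morphism gives $\phi(\u{\delta})=\f_{p(i_1)}\cdots\f_{p(i_k)}\phi(\u{\gamma})$. Since the $p(i_l)$ are pairwise distinct (as $p$ is an embedding) and each $\f_{p(i_l)}$ flips exactly one coordinate, $\phi(\u{\gamma})$ and $\phi(\u{\delta})$ differ precisely at the positions $p(i_1)<\cdots<p(i_k)$; in particular they are distinct, and the maximal position of disagreement is $p(i)$.

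Next I would establish $\phi(\u{\gamma})^{\max}=\phi(\u{\delta})^{\max}$. This is where the hypothesis $\u{\delta}^{\max}=\u{\gamma}^{\max}$ must be used; it is the only genuinely non-trivial step, since single folds do not preserve the target and the argument of the preceding paragraph by itself does not control targets. The idea is to invoke connectedness (Theorem~\ref{theorem:connected}): since $\u{\gamma},\u{\delta}\in\SubEx(\u{s},w)$ for $w=\u{\gamma}^{\max}$, there is a chain $\u{\gamma}=\u{\gamma}_0,\u{\gamma}_1,\ldots,\u{\gamma}_r=\u{\delta}$ with $\u{\gamma}_{l}=\f_{a_l,b_l}\u{\gamma}_{l-1}$. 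By Lemma~\ref{lemma:17}, each $\f_{p(a_l),p(b_l)}$ is applicable to $\phi(\u{\gamma}_{l-1})$ and $\phi(\u{\gamma}_l)=\f_{p(a_l),p(b_l)}\phi(\u{\gamma}_{l-1})$. Since double foldings preserve the target, all $\phi(\u{\gamma}_l)$ share a common target, in particular $\phi(\u{\gamma})^{\max}=\phi(\u{\delta})^{\max}$.

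Finally I would exploit the positive cosign: by~(\ref{eq:cyb1:cat:3}) with $\epsilon_i=1$, we have $\phi(\u{\gamma})^{\to p(i)}=\u{\gamma}^{\to i}>0$. Combined with the two previous steps, this shows that $\phi(\u{\delta})$ and $\phi(\u{\gamma})$ lie in the same $\SubEx(\u{t},w')$, disagree maximally at $p(i)$, and satisfy $\phi(\u{\gamma})^{\to p(i)}>0$, which by definition means $\phi(\u{\delta})<\phi(\u{\gamma})$. The main (really the only) point that requires invoking earlier non-trivial structure is the target-preservation step, which rides on Theorem~\ref{theorem:connected}; everything else is bookkeeping about the intertwining $\phi\circ\f_l=\f_{p(l)}\circ\phi$ and the positivity of the cosign.
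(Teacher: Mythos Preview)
Your proposal is correct and follows essentially the same approach as the paper's own proof: both use the intertwining relations $\phi\circ\f_l=\f_{p(l)}\circ\phi$ to locate the maximal position of disagreement at $p(i)$, invoke Theorem~\ref{theorem:connected} together with Lemma~\ref{lemma:17} to show that targets are preserved under $\phi$, and then use the positive cosign to conclude $\phi(\u{\gamma})^{\to p(i)}=\u{\gamma}^{\to i}>0$. Your write-up is in fact slightly more explicit than the paper's in spelling out that $\phi(\u{\gamma})$ and $\phi(\u{\delta})$ differ \emph{exactly} at the positions $p(i_1),\ldots,p(i_k)$, but the underlying argument is identical.
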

\begin{proof} Let $(p,\phi)$ be a morphism from $\u{s}$ to $\u{t}$. Let $\u{\delta}<\u{\gamma}$
for some $\u{\delta},\u{\gamma}\subset\u{s}$. In this case $\u{\delta},\u{\gamma}\in\SubEx(\u{s},w)$
for the corresponding $w\in W$.
By Theorem~\ref{theorem:connected}, we get
$
\displaystyle
\u{\gamma}=\f_{i_1,j_1}\cdots\f_{i_h,j_h}\u{\delta}
$
for some indices $i_1,\ldots,i_h$ and $j_1,\ldots,j_h$. Applying $\phi$, we get
$\displaystyle
\phi(\u{\gamma})=\f_{p(i_1),p(j_1)}\cdots\f_{p(i_h),p(j_h)}\phi(\u{\delta}).
$ by Lemma~\ref{lemma:17}.
Hence $\phi(\u{\delta})^{\max}=\phi(\u{\gamma})^{\max}$.

%
%
Let $\u{\delta}$ and $\u{\gamma}$ differ maximally at $j$. Then
$\displaystyle
\u{\gamma}=\f_{k_1}\cdots\f_{k_m}\f_j\u{\delta}
$
for some $k_1,\ldots,k_m<j$.
Applying $\phi$, we get
$\displaystyle
\phi(\u{\gamma})=\f_{p(k_1)}\cdots\f_{p(k_m)}\f_{p(j)}\phi(\u{\delta}).
$
Hence $\phi(\u{\gamma})$ and $\phi(\u{\delta})$ differ maximally at $p(j)$.
As the cosign of $(p,\phi)$ is positive,
$\phi(\u{\gamma})^{\to p(j)}=\u{\gamma}^{\to j}>0$, whence $\phi(\u{\delta})<\phi(\u{\gamma})$. 
\end{proof}

Let $\Expr_\D$ be the full subcategory of $\Expr_\D(W,S)$ whose objects are expressions in $A$ and $B$.
Note that $W$ and $S$ do not entre in the notation of this subcategory, as it depends only on $\D$.

\subsection{Projection}\label{Projection}
Let us stick to the notation of the previous section and let $\T$ be the set of reflections of $\D$.
Note that $\T=\D\cap T$.
For $t\in\T$, we denote the $t$-wall in $\mathscr V^*$ by $\H_t$. Similarly, let $\mathscr C$ denote
the fundamental chamber in $\mathscr V^*$.

Let $\iota:\mathscr V\to V$ be the natural embedding.
As this map is $\D$-equivariant, the dual map
$\iota^*:V^*\twoheadrightarrow\mathscr V^*$ is also $\D$-equivariant. It is continuous being linear.
We are going to see what happens to walls and chambers if we apply $\iota^*$ to them.

\begin{lemma}\label{lemma:7}
Let $t\in\T$. Then $\iota^*(H_t)=\H_t$.
\end{lemma}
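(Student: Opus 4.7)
The plan is to translate the statement into the language of roots rather than reflections, then use that $\E \subset E$ is a linear subspace so that $\iota^*$ is just restriction of functionals.

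First I would observe that since $t \in \T \subset \D$, Theorem~\ref{theorem:2} and the description of $\mathscr R = \Rcl(\alpha,\beta)$ give a root $\tau \in \mathscr R$ with $t = \t_\tau$, and by the definitions one has $H_t = H_\tau = \{f\in E^*\mid f(\tau)=0\}$ and $\H_t = \H_\tau = \{g\in \E^*\mid g(\tau)=0\}$. So it suffices to prove $\iota^*(H_\tau) = \H_\tau$. The crucial point is that $\tau$ lies in the subspace $\E$, so $\iota(\tau)=\tau$, and for any $f\in E^*$ the dual map satisfies $(\iota^*f)(\tau) = f(\iota(\tau)) = f(\tau)$.

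For the inclusion $\iota^*(H_\tau) \subseteq \H_\tau$: take $f \in H_\tau$, then $(\iota^*f)(\tau) = f(\tau) = 0$, hence $\iota^*f \in \H_\tau$. (Equivalently, one could argue by $\D$-equivariance of $\iota^*$: if $tf = f$ then $t(\iota^*f) = \iota^*(tf) = \iota^*f$.) For the reverse inclusion $\H_\tau \subseteq \iota^*(H_\tau)$: given $g \in \H_\tau$, choose any linear complement $E = \E \oplus \E'$ and extend $g$ to $f \in E^*$ by $f|_{\E} = g$ and $f|_{\E'} = 0$; then $\iota^* f = g$ and $f(\tau) = g(\tau) = 0$, so $f \in H_\tau$.

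There is essentially no obstacle here — it is a routine two-line argument once one notices that $\tau$ already lives in $\E$, so that $\iota^*$ acts as restriction and both walls are defined by vanishing on the \emph{same} vector $\tau$. The only thing worth being careful about is identifying which root $\tau$ represents $t$ (the choice $\pm\tau$ is irrelevant since $H_{\pm\tau}$ coincide), and noting that a linear functional on the subspace $\E$ always extends to $E$, which is immediate in finite dimensions.
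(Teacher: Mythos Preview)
Your proof is correct. The paper takes a slightly different route: it works directly with the reflection-invariance definition of the walls rather than passing to roots. For the inclusion $\iota^*(H_t)\subset\H_t$ it appeals (implicitly) to $\D$-equivariance of $\iota^*$, exactly as in your parenthetical remark. For the reverse inclusion, instead of extending $g\in\H_\tau$ by zero on a complement, the paper lifts $f\in\H_t$ to an arbitrary $g\in E^*$ via surjectivity of $\iota^*$ and then \emph{averages}: the element $h=(g+tg)/2$ is $t$-invariant, hence lies in $H_t$, and $\iota^*(h)=(f+tf)/2=f$ since $f$ is already $t$-fixed. Your root-based argument is completely elementary linear algebra once the identifications $H_t=H_\tau$ and $\H_t=\H_\tau$ are made; the paper's averaging trick has the virtue of working uniformly for any order-two element of $\D$ without needing to locate a representing root, and avoids even the (trivial) choice of complement. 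Both approaches are essentially one-liners.
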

\begin{proof}
Obviously, $\iota^*(H_t)\subset\H_t$.
Let $f\in\H_t$. As $\iota^*$ is epimorphic, we get $f=\iota^*(g)$ for some $g\in V^*$.
Let $h=(g+tg)/2$. Then $h\in H_t$ and $\iota^*(h)=(\iota^*(g)+t\iota^*(g))/2=(f+tf)/2=f$.
\end{proof}


\begin{lemma}\label{lemma:8}
Let $s\in S$ and $w\in W$. Then $0\notin\iota^*(wC_s)$.
\end{lemma}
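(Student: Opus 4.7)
The plan is to argue by contradiction. Suppose that $0\in\iota^*(wC_s)$, so there exists some $f\in wC_s$ with $\iota^*(f)=0$. Since $(\alpha,\beta)$ is a basis of $\E$ (see Section~\ref{Linear dihedral groups}), the vanishing of $\iota^*(f)=f\circ\iota$ on $\E$ is equivalent to $f(\alpha)=f(\beta)=0$. Writing $f=wg$ for a unique $g\in C_s$, the contragredient action described in Section~\ref{Representations} turns these equalities into
\[
g(w^{-1}\alpha)=g(w^{-1}\beta)=0.
\]

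The key step is to show that the only roots annihilated by $g$ are $\pm e_s$. By the definition of $C_s$ in~(\ref{eq:CX}) we have $g(e_s)=0$ while $g(e_{s'})>0$ for every $s'\in S\setminus\{s\}$. For any root $\gamma=\sum_{s'\in S}c_{s'}\,e_{s'}$, the coefficients $c_{s'}$ are simultaneously all nonnegative or all nonpositive by~\cite[Proposition 4.2.5]{BB}. Therefore
\[
g(\gamma)=\sum_{s'\ne s}c_{s'}\,g(e_{s'})
\]
can vanish only when $c_{s'}=0$ for every $s'\ne s$. Combined with $(\gamma|\gamma)=1$, this forces $c_s=\pm1$, i.e., $\gamma=\pm e_s$.

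Applying this observation to both $w^{-1}\alpha$ and $w^{-1}\beta$ yields $w^{-1}\alpha,\,w^{-1}\beta\in\{e_s,-e_s\}$, whence $\alpha=\pm\beta$. This contradicts Definition~\ref{def:properly_situated_roots}: since $\alpha$ and $\beta$ are properly situated they are distinct positive roots, so $\alpha\ne\beta$ and $\alpha\ne-\beta$. The argument has no real obstacle; the only delicate point is that the vanishing condition $g(\gamma)=0$, combined with the one-sidedness of the coefficients of a root, is extremely restrictive — it pins $\gamma$ to $\pm e_s$ and thereby forces $\alpha$ and $\beta$ to be $W$-conjugate to the same simple root, which is prohibited precisely by proper situation.
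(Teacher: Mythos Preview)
Your proof is correct and follows essentially the same approach as the paper's: both argue by contradiction, reduce to $g(w^{-1}\alpha)=g(w^{-1}\beta)=0$ for $g\in C_s$, conclude $w^{-1}\alpha,w^{-1}\beta\in\{\pm e_s\}$, and obtain the contradiction $\alpha=\pm\beta$. The only difference is that you spell out in detail why $g(\gamma)=0$ forces $\gamma=\pm e_s$ (via the sign-coherence of root coefficients), whereas the paper states this step without justification.
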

\begin{proof}
Suppose that $\iota^*(wc)=0$ for some $c\in C_s$. Then we get $wc(\alpha)=0$ and $wc(\beta)=0$.
Therefore, $c(w^{-1}\alpha)=0$ and $c(w^{-1}\beta)=0$.
These equalities can only hold if $w^{-1}\alpha=\pm\v_s$ and $w^{-1}\beta=\pm\v_s$.
Hence $\alpha$ and $\beta$ are proportional, which is a contradiction.
\end{proof}


\begin{lemma}\label{lemma:5}
Let $\iota^*(wC)\subset d\mathscr C$, where $w\in W$ and $d\in\D$. Let $s\in S$
and $t=wsw^{-1}$. 
{\renewcommand{\labelenumi}{{\it(\roman{enumi})}}
\renewcommand{\theenumi}{{\rm(\roman{enumi})}}
\begin{enumerate}
\setlength{\itemindent}{-10pt}
\item\label{lemma:5:p:1} If $t\notin\D$, then $\iota^*(wsC)\subset d\mathscr C$.\\[-8pt]
\item\label{lemma:5:p:2} If $t\in\D$, then $\iota^*(wsC)\subset td\mathscr C$ and the chambres
                         $d\mathscr C$ and $td\mathscr C$ are connected through~$\H_t$.
\end{enumerate}}
\end{lemma}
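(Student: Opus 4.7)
My starting point is a preliminary observation that for any $u \in W$ the image $\iota^*(uC)$ is an open subset of $\E^*$ contained in a unique chamber of $\D$: openness holds because $\iota^*$ is a surjective linear map, hence open, and for each $\tau \in \mathscr R$ the preimage $\iota^{*-1}(\H_\tau) = H_\tau$ is disjoint from $uC$ by Lemma~\ref{lemma:16}. Applied to $u = ws$ this yields a chamber $d''\mathscr C$ with $\iota^*(wsC) \subset d''\mathscr C$, so the whole proof reduces to identifying $d''$. Part~(ii) then follows quickly: when $t \in \D$, the $\D$-equivariance of $\iota^*$ gives $\iota^*(wsC) = \iota^*(twC) = t\,\iota^*(wC) \subset td\mathscr C$; and for the connection, every element of $C_s$ is fixed by $s$, so every element of $wC_s$ is fixed by $t$, which together with Lemma~\ref{lemma:7} gives $\iota^*(wC_s) \subset d\overline{\mathscr C} \cap td\overline{\mathscr C} \cap \H_t$, a set that contains a nonzero point by Lemma~\ref{lemma:8}.

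For part~(i) I proceed by contradiction and aim to show $d'' = d$. By continuity $\iota^*(wC_s) \subset d\overline{\mathscr C} \cap d''\overline{\mathscr C}$, and Lemma~\ref{lemma:8} furnishes a nonzero point $p$ of this intersection. Applying Proposition~\ref{proposition:W} to $p$ I obtain a proper subset $X \subsetneq \{A,B\}$ with $p \in d\mathscr C_X = d''\mathscr C_X$ and $d\langle X\rangle = d''\langle X\rangle$. If $X = \emptyset$ then $d = d''$ at once. Otherwise, say $X = \{A\}$ (the case $X = \{B\}$ being symmetric), so $d'' \in \{d, dA\}$ and the task is to rule out $d'' = dA$. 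In that subcase, $\H_{d\alpha}$ separates $d\mathscr C$ from $d''\mathscr C$ by Lemma~\ref{corollary:11:scr} (since $d^{-1}(d\alpha) = \alpha > 0$ while $(dA)^{-1}(d\alpha) = A\alpha = -\alpha < 0$), and every $c \in C$ then satisfies $c(w^{-1}d\alpha) > 0$ and $c(sw^{-1}d\alpha) < 0$, which forces $w^{-1}d\alpha > 0$ and $s(w^{-1}d\alpha) < 0$.

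The main obstacle is the final step. By the standard positivity fact that the simple reflection $s$ permutes $R^+ \setminus \{e_s\}$, the two inequalities just obtained force $w^{-1}d\alpha = e_s$, i.e., $we_s = d\alpha$. Since $\mathscr R$ is $\D$-invariant and contains $\alpha$, this shows $we_s \in \mathscr R$, whence $t = \t_{we_s} = dAd^{-1} \in \D$, contradicting the hypothesis $t \notin \D$. This rules out $d'' = dA$ (and, symmetrically, $d'' = dB$), completing part~(i).
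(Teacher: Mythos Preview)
Your argument for part~(ii) is correct and matches the paper. The gap is in the preliminary observation underpinning part~(i): from openness, connectedness, and disjointness from the walls you conclude that $\iota^*(uC)$ lies in some chamber $d''\mathscr C$, but this inference fails when $\alpha,\beta$ are in a degenerate or hyperbolic position. In those cases $\D$ is infinite and the chambers do not exhaust the complement of the walls in $\E^*$; for instance, in the degenerate case the set $-\mathscr C=\{f:f(\alpha)<0,\ f(\beta)<0\}$ is open, connected, and disjoint from every $\H_\tau$ (since each $\tau\in\mathscr R^+$ is a nonnegative combination of $\alpha,\beta$), yet it is not of the form $d\mathscr C$ because no element of the infinite dihedral group acts as $-\id$ on $\E^*$. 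Without the existence of $d''$, your contradiction argument via Proposition~\ref{proposition:W} cannot start.

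The paper sidesteps this by first proving the equivalence $\iota^*(wC)\subset d\mathscr C \Leftrightarrow w^{-1}d\alpha>0$ and $w^{-1}d\beta>0$, and then verifying the right-hand side directly for the target chamber: for~(i) one checks that $w^{-1}d\alpha$ and $w^{-1}d\beta$ are positive and distinct from $e_s$ (using $t\notin\D$), hence remain positive after applying $s$. The contrapositive of the key implication---if $w^{-1}d\alpha=e_s$ then $t=dAd^{-1}\in\D$---is exactly what you reach at the end of your argument, so once the faulty preliminary is replaced by this equivalence your proof collapses to the paper's.
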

\begin{proof}
First let us prove the equavalence
\begin{equation}\label{eq:23}
\iota^*(wC)\subset d\mathscr C\Leftrightarrow w^{-1}d\alpha>0, w^{-1}d\beta>0.
\end{equation}
Indeed, by $\D$-equivariance of $\iota^*$, we get
\begin{multline*}
\iota^*(wC)\subset d\mathscr C\Leftrightarrow \iota^*(d^{-1}wC)\subset\mathscr C\Leftrightarrow\forall c\in C:\; d^{-1}wc(\alpha)>0, d^{-1}wc(\beta)>0\\
\Leftrightarrow\forall c\in C:\; c(w^{-1}d\alpha)>0, c(w^{-1}d\beta)>0\Leftrightarrow w^{-1}d\alpha>0,w^{-1}d\beta>0.
\end{multline*}

\ref{lemma:5:p:1} We claim that $w^{-1}d\alpha$ and $w^{-1}d\beta$ are positive roots distinct from $\v_s$.
The positivity part follows from~(\ref{eq:23}). Suppose that $w^{-1}d\alpha=\v_s$. Thus
$$
s=\t_{\v_s}=\t_{w^{-1}d\alpha}=w^{-1}d\,\t_\alpha(w^{-1}d)^{-1}=w^{-1}d\,\t_\alpha d^{-1}w.
$$
Hence, we get a contradiction $t=wsw^{-1}=d\,\t_\alpha d^{-1}\in\D$. Similarly, $w^{-1}d\beta\ne\v_s$.
By~\cite[Lemma 3.4(a)]{BD}, $sw^{-1}d\alpha>0$ and $sw^{-1}d\beta>0$.
Therefore, $\iota^*(wsC)\subset d\mathscr C$ by~(\ref{eq:23}).

\ref{lemma:5:p:2}
As
$$
(ws)^{-1}td\alpha=w^{-1}d\alpha>0,\quad (ws)^{-1}td\beta=w^{-1}d\beta>0,
$$
we get by~(\ref{eq:23}) that $\iota^*(wsC)\subset td\mathscr C$.

Let us prove the second claim. Let $c\in C_s$. Then $c\in\overline{C}\cap s\overline{C}\cap H_s$.
Therefore, $wc\in w\overline{C}\cap ws\overline{C}\cap H_t$.
By part~\ref{lemma:5:p:1}, 
Lemmas~\ref{lemma:7}, \ref{lemma:8} and continuity of $\iota^*$, we get
$$
0\ne\iota^*(wc)\in\iota^*(w\overline{C})\cap\iota^*(ws\overline{C})\cap\iota^*(H_t)\subset d\,\overline{\mathscr C}\cap td\,\overline{\mathscr C}\cap\H_t.
$$
Now the result follows from Remark~\ref{rem:4}.
\end{proof}

\begin{corollary}\label{corollary:4}
For any $w\in W$, there exists a unique $d\in\D$ such that $\iota^*(wC)\subset d\mathscr C$.
\end{corollary}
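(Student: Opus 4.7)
The plan is to prove existence by induction on the Coxeter length $\ell(w)$, using Lemma~\ref{lemma:5} as the engine, and to deduce uniqueness directly from Proposition~\ref{proposition:W}.

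For the base case $\ell(w) = 0$, so $w = 1$, the containment $\iota^*(C) \subset \mathscr{C}$ is exactly~(\ref{eq:26}), so $d = 1$ works. For the inductive step, suppose the claim holds for all elements of length strictly less than $\ell(w)$ and write $w = w's$ with $s \in S$ and $\ell(w') = \ell(w) - 1$. By induction, there is some $d' \in \D$ with $\iota^*(w'C) \subset d'\mathscr{C}$. Applying Lemma~\ref{lemma:5} with $w'$ playing the role of~$w$, and setting $t = w'sw'^{-1}$, we obtain $\iota^*(wC) = \iota^*(w'sC) \subset d'\mathscr{C}$ when $t \notin \D$, and $\iota^*(wC) \subset td'\mathscr{C}$ when $t \in \D$. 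In both cases there is a $d \in \D$ with $\iota^*(wC) \subset d\mathscr{C}$, completing the induction.

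For uniqueness, suppose $\iota^*(wC) \subset d\mathscr{C}$ and $\iota^*(wC) \subset d'\mathscr{C}$ for some $d, d' \in \D$. Since $C$ is nonempty, $\iota^*(wC)$ is nonempty, so $d\,\mathscr{C} \cap d'\,\mathscr{C} \ne \emptyset$. Invoking Proposition~\ref{proposition:W} with $X = X' = \emptyset$ yields $d\langle\emptyset\rangle = d'\langle\emptyset\rangle$, that is $d = d'$.

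There is essentially no obstacle here: both Lemma~\ref{lemma:5} and Proposition~\ref{proposition:W} have been established, and together they reduce the corollary to a straightforward induction. The only delicate point is the dichotomy in Lemma~\ref{lemma:5} according to whether the conjugate $w'sw'^{-1}$ lies in $\D$, but this is already handled by that lemma, so no further analysis is needed.
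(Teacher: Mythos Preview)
Your proof is correct and follows essentially the same approach as the paper, which also argues by induction on the length of $w$ using~(\ref{eq:26}) for the base case and Lemma~\ref{lemma:5} for the inductive step. The paper's proof is terser and leaves the uniqueness argument implicit; your explicit invocation of Proposition~\ref{proposition:W} for uniqueness is the natural way to fill in that detail.
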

\begin{proof}
The result follows by induction on $\l(w)$ from $\iota^*(C)\subset\mathscr C$ and Lemma~\ref{lemma:5}.
\end{proof}
\noindent
In the notation of this corollary, we denote $\pr_\D(wC)=d\mathscr C$ and $\pr_\D(w)=d$.
We have the following characteristic property:
$$
\iota^*(wC)\subset\pr_\D(wC)=\pr_\D(w)\mathscr C.
$$
Thus we get $\pr_\D(C)\,{=}\,\mathscr C$.\!
The projection $\pr_\D(w)$ can be computed inductively:
$$
\pr_\D(1)=1,\qquad
\pr_\D(ws)=\left\{\!
\begin{array}{ll}
\pr_\D(w)&\text{ if }wsw^{-1}\notin\D;\\
wsw^{-1}\pr_\D(w)&\text{ if }wsw^{-1}\in\D.
\end{array}
\right.
$$

\begin{corollary}\label{corollary:5}
Let $wC$ and $w'C$ be connected through $H_t$, where $w,w'\in W$ and $t\in T$.
If $t\notin\D$, then $\pr_\D(wC)=\pr_\D(w'C)$. Otherwise $\pr_\D(wC)$
and
$\pr_\D(w'C)$
are connected through $\H_t$ and are distinct if and only if $wC\ne w'C$.
\end{corollary}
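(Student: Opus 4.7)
The plan is to reduce everything to Lemma~\ref{lemma:5} via Proposition~\ref{proposition:2}. First I would apply Proposition~\ref{proposition:2} to rewrite the hypothesis: the connection of $wC$ and $w'C$ through $H_t$ means that $t=wsw^{-1}$ for some $s\in S$ and that either $w'=w$ or $w'=ws$. I would also set $d=\pr_\D(w)$, so that by definition $\iota^*(wC)\subset d\mathscr C$ and $\pr_\D(wC)=d\mathscr C$.

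Then I would split into four cases according to whether $t\in\D$ and whether $w'=w$ or $w'=ws$. When $w'=w$ the equality of projections is tautological, so the claim in the first half (the case $t\notin\D$) is immediate. When $w'=ws$ and $t\notin\D$, Lemma~\ref{lemma:5}\ref{lemma:5:p:1} gives $\iota^*(wsC)\subset d\mathscr C$, so $\pr_\D(w'C)=d\mathscr C=\pr_\D(wC)$ as required. When $w'=ws$ and $t\in\D$, Lemma~\ref{lemma:5}\ref{lemma:5:p:2} gives $\iota^*(wsC)\subset td\mathscr C$ together with the required connection of $d\mathscr C$ and $td\mathscr C$ through $\H_t$; these chambers are distinct because $t\ne1$ (it is a reflection).

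The only slightly subtle case is $t\in\D$ with $w'=w$: here I need that $\pr_\D(wC)=d\mathscr C=\pr_\D(w'C)$ is connected to itself through $\H_t$. For this I invoke Remark~\ref{rem:2}: since Lemma~\ref{lemma:5}\ref{lemma:5:p:2}, applied with the same $s$, produces a $\H_t$-connection from $d\mathscr C$ to $td\mathscr C$, Remark~\ref{rem:2} turns it into a connection of $d\mathscr C$ with itself through $\H_t$, and the claim that they are distinct iff $wC\ne w'C$ becomes the trivially true ``equal iff equal'' in this sub-case. To conclude, I would remark on the distinctness clause as a whole: $wC=w'C$ holds exactly when $w=w'$ (the stabilizer of $C$ is trivial), which corresponds to the case $w'=w$ where the projections coincide, whereas $w'=ws$ always gives both $wC\ne w'C$ and $\pr_\D(wC)\ne\pr_\D(w'C)$ by the computation above. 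No step is a real obstacle; the main point is simply to keep the four cases straight and to pick the correct part of Lemma~\ref{lemma:5} in each.
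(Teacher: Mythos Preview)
Your proposal is correct and follows essentially the same approach as the paper: reduce via Proposition~\ref{proposition:2} to the cases $w'=w$ and $w'=ws$, then apply Lemma~\ref{lemma:5}\ref{lemma:5:p:1} or~\ref{lemma:5:p:2} depending on whether $t\in\D$, and handle the self-connection case with Remark~\ref{rem:2}. Your write-up is simply a more detailed version of the paper's terse argument.
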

\begin{proof} By Proposition~\ref{proposition:2}, there exists $s\in S$
such that $t=wsw^{-1}$ and either $w'=w$ or $w'=ws$.
If $t\notin\D$, then $\pr_\D(wC)=\pr_\D(w'C)$ 
trivially or by Lemma~\ref{lemma:5}\ref{lemma:5:p:1}.

Suppose now that $t\in\D$. If $w'=ws$, then
$\pr_\D(wC)$ 
and
$\pr_\D(w'C)$ 
are distinct and connected through $\H_t$
by Lemma~\ref{lemma:5}\ref{lemma:5:p:2}. The case $w=w'$ follows from Remark~\ref{rem:1}.
\end{proof}

The following definition is a version of projection of galleries to root subsystems
described in~\cite{subroot} for the specific situation of a dihedral reflection subgroup
considered in this paper.

\begin{definition}\label{definition:1}
Let $\u{\Gamma}=(C_1,L_1,\ldots,C_m,L_m,C_{m+1})$ be a gallery in $V^*$.
Let $\u{\Gamma}(\D)=\{p(1)<\cdots<p(k)\}$ be the set of all indices $i=1,\ldots,m$ such that $L_i=H_t$ for some $t\in\T$.
We denote
$$
\pr_\D(\u{\Gamma})=(\pr_\D(C_{p(1)}),\iota^*(L_{p(1)}),\ldots,\pr_\D(C_{p(k)}),\iota^*(L_{p(k)}),\pr_\D(C_{m+1})).
$$
\end{definition}

By Corollary~\ref{corollary:5}, we get that $\pr_\D(\u{\Gamma})$ is a well-defined gallery in $\mathscr V^*$.
This fact is similar to~\cite[Proposition~14]{subroot}.
Note that if $\u{\Gamma}$ starts at the fundamental chamber,
then
$$
\u{\Gamma}(\D)=\{i=1,\ldots,m\suchthat\u{\gamma}^{\to i}\in \D\{\alpha,\beta\}\}.
$$

\begin{theorem}\label{lemma:15}
Let $\u{\Gamma}$ be a gallery in $V^*$ starting at $C$.
Then the gallery $\u{\Delta}=\pr_\D(\u{\Gamma})$ starts at $\C$.
Let $p:\{1,\ldots,|\u{\Delta}|\}\ito\u{\Gamma}(\D)$ be the increasing bijection.
Then $(\u{\delta},\u{\gamma})$ is a $p$-pair of positive cosign.
Moreover, there exists a unique map $\phi:\SubEx(\u{t})\to\SubEx(\u{s})$, where $\u{\delta}\subset\u{t}$ and $\u{\gamma}\subset\u{s}$,
such that $\phi(\u{\delta})=\u{\gamma}$ and the pair $(\phi,p)$ is a morphism $\u{t}\to\u{s}$
of positive cosign in $\Expr_\D(W,S)$.
\end{theorem}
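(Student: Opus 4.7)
The plan is to establish the three assertions in turn: (a) $\u{\Pi}$ is a valid gallery starting at $\mathscr C$, (b) the subexpression $\u{\pi}$ corresponding to $\u{\Pi}$ forms together with $\u{\gamma}$ a $p$-pair of positive cosign, and (c) the map $\phi$ extends this pair to a full morphism in the category.

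Write $\u{\Gamma}=(C_1,L_1,\ldots,C_m,L_m,C_{m+1})$ and $\u{\Gamma}(\D)=\{p(1)<\cdots<p(k)\}$. Whenever $j\notin\u{\Gamma}(\D)$, Corollary~\ref{corollary:5} (first case) gives $\pr_\D(C_j)=\pr_\D(C_{j+1})$. Iterating yields $\pr_\D(C_1)=\pr_\D(C_{p(1)})$ and $\pr_\D(C_{p(i)+1})=\pr_\D(C_{p(i+1)})$ for $i<k$ (together with $\pr_\D(C_{p(k)+1})=\pr_\D(C_{m+1})$). Combined with~(\ref{eq:26}) this gives $\pr_\D(C_{p(1)})=\mathscr C$. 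For an index $p(i)$, Lemma~\ref{lemma:7} identifies $\iota^*(L_{p(i)})=\H_{\u{\gamma}^{\to p(i)}}$, and Corollary~\ref{corollary:5} (second case) shows that the consecutive chambers $\pr_\D(C_{p(i)})$ and $\pr_\D(C_{p(i)+1})$ of $\u{\Pi}$ are connected through this wall. Hence $\u{\Pi}$ is a well-formed gallery starting at $\mathscr C$.

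Let $\u{\pi}\subset\u{t}$ be the subexpression associated with $\u{\Pi}$ by Proposition~\ref{proposition:3.5}. Comparing the walls of $\u{\Pi}$ given by Proposition~\ref{proposition:3.5} with those listed in Definition~\ref{definition:1} yields $\H_{\u{\pi}^{\to i}}=\H_{\u{\gamma}^{\to p(i)}}$, so $\u{\pi}^{\to i}=\pm\u{\gamma}^{\to p(i)}$. To pin down the sign, note that for $\alpha\in\mathscr R$ and $f\in E^*$ the real numbers $f(\alpha)$ and $(\iota^*f)(\alpha)$ are literally equal, so $\iota^*$ sends $E^{*\pm}_\alpha$ into $\E^{*\pm}_\alpha$. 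Since $\iota^*(C)\subset\mathscr C$ and $\iota^*(C_{p(i)})\subset\pr_\D(C_{p(i)})$, the wall $H_{\u{\gamma}^{\to p(i)}}$ separates $C$ from $C_{p(i)}$ if and only if $\H_{\u{\gamma}^{\to p(i)}}$ separates $\mathscr C$ from $\pr_\D(C_{p(i)})$. Applying Lemma~\ref{lemma:to} on both sides then gives $\u{\pi}^{\to i}>0\Leftrightarrow\u{\gamma}^{\to p(i)}>0$, so $\u{\pi}^{\to i}=\u{\gamma}^{\to p(i)}$ and $(\u{\pi},\u{\gamma})$ is a $p$-pair of positive cosign.

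For the morphism, any $\u{\sigma}\in\SubEx(\u{t})$ differs from $\u{\pi}$ on a unique finite set of positions $\{i_1,\ldots,i_r\}$, and I set $\phi(\u{\sigma})=\f_{p(i_1)}\cdots\f_{p(i_r)}\u{\gamma}$. This is well-defined because the single-folding operators commute and are involutions, and by construction $\phi(\u{\pi})=\u{\gamma}$ and $\phi(\f_i\u{\sigma})=\f_{p(i)}\phi(\u{\sigma})$. Starting from the $p$-pair $(\u{\pi},\u{\gamma})$ of positive cosign and inducting with Lemma~\ref{lemma:cat:1} shows that $(\u{\sigma},\phi(\u{\sigma}))$ is a $p$-pair of positive cosign for every $\u{\sigma}$, so $(p,\phi)$ is a morphism in $\Expr_{\alpha,\beta}(W,S)$ of positive cosign. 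The main obstacle is the sign-matching in step (b): proving equality $\u{\pi}^{\to i}=\u{\gamma}^{\to p(i)}$ rather than merely up to sign requires transferring the separation relation across $\iota^*$ and applying Lemma~\ref{lemma:to} simultaneously in the ambient Coxeter complex and the dihedral one.
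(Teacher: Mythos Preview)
Your proof is correct and follows the same overall strategy as the paper: identify the walls to get $\u{\pi}^{\to i}=\pm\u{\gamma}^{\to p(i)}$, determine the sign, then extend to $\phi$ via iterated single folds and Lemma~\ref{lemma:cat:1}.

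The one place you and the paper differ is the sign determination. The paper argues algebraically: it picks an arbitrary $c\in C$, uses the $\D$-equivariance of $\iota^*$ to land $\iota^*\big((\u{\pi}^{<i})^{-1}\u{\gamma}^{<p(i)}c\big)$ in $\mathscr C$, and then unwinds the definitions to obtain $\epsilon\,c(e_{s_{p(i)}})>0$, forcing $\epsilon=1$. You argue geometrically: since $\iota^*$ preserves the half-spaces $E^{*\pm}_\alpha\to\E^{*\pm}_\alpha$ for $\alpha\in\mathscr R$ and chambers are connected and disjoint from walls, separation of $C$ from $C_{p(i)}$ by $H_{\u{\gamma}^{\to p(i)}}$ is equivalent to separation of $\mathscr C$ from $\pr_\D(C_{p(i)})$ by $\H_{\u{\gamma}^{\to p(i)}}$, and then Lemma~\ref{lemma:to} on both sides gives the sign equality. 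Your route is slightly more conceptual and avoids the explicit calculation; the paper's route is more self-contained since it does not need the observation that $\iota^*$ respects the half-space decomposition (though that observation is immediate). Both are equally valid and of comparable length.
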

\begin{proof}
The first claim follows from $\pr_\D(C)=\mathscr C$. Comparing the $i$th walls of $\pr_\D(\u{\Gamma})$ and $\u{\Delta}$,
we get by Lemma~\ref{lemma:7} that
$$
\iota^*\Big(H_{\u{\gamma}^{\to p(i)}}\Big)=\H_{\u{\gamma}^{\to p(i)}}=\H_{\u{\delta}^{\to i}}.
$$
Hence $\u{\gamma}^{\to p(i)}=\epsilon\u{\delta}^{\to i}$ for $\epsilon=\pm1$.
Comparing the $i$th chambers of $\pr_\D(\u{\Gamma})$ and $\u{\Delta}$, we get
$$
\iota^*(\u{\gamma}^{<p(i)}C)\subset\pr_\D(\u{\gamma}^{<p(i)}C)=\u{\delta}^{<i}\C.
$$
Let $c$ be an arbitrary element of $C$. As $\iota^*$ is $\D$-equivariant,
we get $\iota^*((\u{\delta}^{<i})^{-1}\gamma^{<p(i)}c)\in\C$.
Assuming that $\u{\delta}\subset\u{t}$ and $\u{\gamma}\subset\u{s}$ for the corresponding expressions, we conclude that
\begin{multline*}
0<\iota^*((\u{\delta}^{<i})^{-1}\u{\gamma}^{<p(i)}c)(\v_{t_i})
=((\u{\delta}^{<i})^{-1}\u{\gamma}^{<p(i)}c)(\v_{t_i})\\
=-c((\gamma^{<p(i)})^{-1}\u{\delta}^{\to i})
=-\epsilon c((\gamma^{<p(i)})^{-1}\u{\gamma}^{\to p(i)})=\epsilon c(\v_{s_{p(i)}}).
\end{multline*}
As $c(\v_{s_{p(i)}})>0$, we get $\epsilon=1$.

Finally, to get the required map $\phi$, we continue the map $\u{\delta}\mapsto\u{\gamma}$
to a map $\phi:\SubExpr(\u{t})\to\SubExpr(\u{s})$, using the rule
$
\displaystyle\phi(\f_{k_1}\cdots \f_{k_m}\u{\delta})=\f_{p(k_1)}\cdots\f_{p(k_m)}\u{\gamma}
$
by Lemma~\ref{lemma:cat:1}
\end{proof}
\noindent
In the above lemma, $\u{t}$ is an expression in $A$ and $B$ and $\u{s}$ is an expression in $S$.

\section{Dihedral cycles}\label{Dihedral_cycles}

Throughout this section, $(\D,\{A,B\})$ is the Coxeter system associated to
a based root system $(\Phi_\D,\{\alpha,\beta\})$ in a two-dimensional real vector space $\mathscr V\cong\R\alpha\oplus\R\beta$.
Let $n=\ord(AB)$. Thus $A=\mathcal r_\alpha$ and $B=\mathcal r_\beta$ are reflections of $\mathscr V$
and $n\ge2$. We will also use the letter $\mathfrak c$ as a wildcard to denote either $A$ and $B$.
Once the $\mathfrak c$ is chosen, $\bar{\mathfrak c}$ denotes $A$ or $B$ so that $\bar{\mathfrak c}\ne\mathfrak c$.
We will use the category $\Expr_\D$ introduced at the end of Section~\ref{Categories} and
use curly letters to denote chambers, walls and panels in $\mathscr V^*$ as stipulated in Section~\ref{Projection}.

\subsection{Galleries as alcove walks}\label{Galleries_as_paths}
We are going to describe the Coxeter complex $\Sigma(\D,\{A,B\})$.
Following the introduction, we denote by $\Sigma_k(\D,\{A,B\})$ the union of all simplices of codimension $k$.
In the present case, $\Sigma_k(\D,\{A,B\})=\emptyset$ for $k\ge2$.
%
A chamber is said to be {\it attached} to a panel if its closure contains this panel.

First suppose that $n<\infty$. Then there exists a $\D$-equivariant isomorphism $\E\cong\E^*$
given by $e\mapsto e^\vee$, where $e^\vee(e')=(e|e')$ for any $e'\in\E$.
Therefore, we can describe the fundamental chamber and the walls as follows:
$$
\C=\{e\in\E\suchthat (e|\alpha)>0,(e|\beta)>0\},\quad
\H_\tau=\{e\in\E\suchthat (e|\tau)=0\}.
$$
Following the definitions at the end of Section~\ref{Walls, chambers and panels}, we get
$$
\Sigma_0(\D,\{A,B\})\cong\{e\in\E\suchthat\sqrt{(e|e)}=n/\pi\},\quad
\Sigma_1(\D,\{A,B\})\cong\Sigma_0(\D,\{A,B\})\cap\bigcup_{t\in\T}\H_t.
$$
The former set is just the circle $S_{n/\pi}^1$ 
and the latter set, which we denote by $P_n$,
consists of the vertices of a regular $2n$-gone.
The panels are one-point subsets of $P_n$ and the walls are unions of diametrically opposite panels.
The chambers are open arcs $\arc{pq}^\circ$ for adjacent (having distance $1$ along the circle) panels.
We will call these chambers {\it arc chambers}.

If $(\alpha|\beta)=-1$, then
$$
\Sigma_0(\D,\{A,B\})\cong\{x\alpha^*+y\beta^*\suchthat x+y=1\}
$$
and $\Sigma_1(\D,\{A,B\})$ is identified with its subset with integer coordinates $x$ and $y$.
The panels and the walls are one-point subsets of the latter set and the chambers are
open line segments between adjacent panels.

Finally, suppose that $(\alpha|\beta)<-1$.
Let $\xi=-(\alpha|\beta)$.
We define a two-sided sequence $\{\gamma^*_i\}_{i\in\Z}$ of vectors of $\mathscr V^*$ by the rules:
$$
\gamma^*_0=\alpha^*,\quad \gamma^*_1=\beta^*,\quad \forall i\in\Z:\gamma^*_{i-1}+\gamma^*_{i+1}=2\xi\gamma^*_i.
$$
Then we get
$$
\begin{array}{c}
\Sigma_0(\D,\{A,B\})\cong\{x\alpha^*+y\beta^*\in\E^*\suchthat x^2+y^2+2\xi xy=1,x+y>0\},\\[6pt]
\Sigma_1(\D,\{A,B\})\cong\{\gamma^*_i\suchthat i\in\Z\}.
\end{array}
$$
As in the previous case, the panels and the walls are one-point subsets of the latter set and the chambers are
open hyperbola segments between adjacent panels.
In all considered above cases, we fix the fundamental chamber $\mathscr F=\Sigma_0(\D,\{A,B\})\cap\mathscr C$.

\begin{definition}\label{definition:cycb6:1}
An alcove walk is a continuous map $g:[a,b]\to\Sigma_0(\D,\{A,B\})$ such that
the set $g^{-1}(\Sigma_1(\D,\{A,B\}))$ is finite and
contains neither $a$ nor $b$.
Let $g^{-1}(\Sigma_1(\D,\{A,B\}))=\{x_1<\cdots<x_m\}$. The gallery generated by $g$ is the following gallery in $\mathscr V^*$:
$$
\u{\Gamma}=(\mathscr C_1,\mathscr L_1,\mathscr C_2,\mathscr L_2,\ldots,\mathscr C_m,\mathscr L_m,\mathscr C_{m+1})
$$
such that $g(x_i)\in\mathscr L_i$ for any $i=1,\ldots,m$ and $g((x_{i-1},x_i))\subset\mathscr C_i$ for any $i=1,\ldots,m+1$,
where $x_0=a$ and $x_{m+1}=b$.
\end{definition}

Clearly, to any alcove walk, there corresponds a unique gallery in $\mathscr V^*$.
Vice versa, any gallery in $\mathscr V^*$ is generated by an alcove walk.
The following result easily follows from the definitions, see Section~\ref{Circle} and
Definition~\ref{def:gallery_double_fold}.

\begin{proposition}\label{proposition:cycb6:4}
Let $n=\ord(AB)$ be finite, $\u{\Gamma}$ be the gallery corresponding to an alcove walk $g$ and
$g^{-1}(P_n)=\{x_1<\cdots<x_m\}$. Then for any indices
$1\le i<j\le m$ such that
the points $g(x_i)$ and $g(x_j)$ belong to the same line through the center of $S_{n/\pi}^1$,
the gallery $\f_{i,j}\u{\Gamma}$ corresponds to the alcove walk $\f_{x_i,x_j}g$.
If the last path is a double fold of $g$ towards the fundamental chamber,
then $\f_{i,j}\u{\Gamma}$ is a double fold of $\u{\Gamma}$ also towards the fundamental chamber.
\end{proposition}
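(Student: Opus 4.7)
The plan is to trace through Section~\ref{Circle} and Definition~\ref{def:gallery_double_fold} and verify term-by-term that the two constructions produce matching data. First I would identify that $L_i=L_j$: since $g(x_i),g(x_j)\in P_n$ lie on a common line $L$ through the center of $S^1_{n/\pi}$, and since walls of $\Sigma(\D,\{A,B\})$ are exactly pairs of diametrically opposite points of $P_n$, the intersection $L\cap S^1_{n/\pi}$ is a single wall, which must equal both $L_i$ and $L_j$. Writing $t$ for the associated reflection, the restriction of $t$ to $\Sigma_0$ coincides with $\r_L$.

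Next I would compute the gallery generated by $\f_{x_i,x_j}g$. Since $\r_L$ permutes $P_n$, the preimage $(\f_{x_i,x_j}g)^{-1}(\Sigma_1(\D,\{A,B\}))$ is still $\{x_1<\cdots<x_m\}$. For $k\le i$ or $k>j$ the walk agrees with $g$, so the $k$th wall of the generated gallery is $L_k$ and its $k$th chamber is $C_k$. For $i<k\le j$ the crossing value is $\r_L g(x_k)$, which lies in the wall $\r_L L_k=tL_k$; and on each subinterval between consecutive crossings inside $(x_i,x_j)$, the reflected path sits in $\r_L C_k=tC_k$. Matching against Definition~\ref{def:gallery_double_fold}, the generated gallery is precisely $\f_{i,j}\u{\Gamma}$. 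The only real subtlety is the boundary: $x_i$ and $x_j$ themselves belong to the unchanged portion of the piecewise definition of $\f_{x_i,x_j}g$, so the $i$th and $j$th walls of the generated gallery are still $L_i$ and $L_j$ (not $tL_i$ and $tL_j$), and the ``reflected'' middle segment runs from $tC_{i+1}$ through $tC_j$.

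For the second assertion, I would observe that each arc chamber is connected and disjoint from $L$ (it is an open subarc of $S^1_{n/\pi}$, whereas $L\cap S^1_{n/\pi}$ consists of two isolated points of $P_n$), hence lies on a single side of $L$, and this side agrees with that of every value of $\f_{x_i,x_j}g$ in the corresponding open subinterval. The arc chambers $tC_{i+1},\ldots,tC_j$ are exactly those visited by $\f_{x_i,x_j}g$ on $(x_i,x_j)$, so the condition ``$L$ separates no point of $\mathscr F$ from any value $\f_{x_i,x_j}g(t'')$'' is equivalent to ``$L_i$ separates none of $tC_{i+1},\ldots,tC_j$ from the fundamental chamber'', giving the desired implication. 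The argument uses nothing deeper than the definitions and the dihedral symmetry $\r_L(P_n)=P_n$; the only obstacle is careful bookkeeping at the endpoints $x_i,x_j$ as indicated above.
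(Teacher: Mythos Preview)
Your proposal is correct and follows exactly the approach the paper itself indicates: the paper does not give a detailed proof but simply states that the result ``easily follows from the definitions, see Section~\ref{Circle} and Definition~\ref{def:gallery_double_fold}'', and your term-by-term verification is precisely that unwinding. Your bookkeeping at the endpoints $x_i,x_j$ (where $\r_L$ fixes the points anyway, so $tL_j=L_j$) and the observation that $\r_L(P_n)=P_n$ are the only points requiring care, and you handle them correctly.
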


\subsection{Minimal paths}
For a point $x\in\R$, we say that a property $\Theta(t)$ holds as $t\to x^-$ (as $t$ approaches $x$ from the left)
if there exists a number $\epsilon>0$ such that $\Theta(t)$ holds (and thus is defined) for any $t\in(x-\epsilon,x)$.
To formulate the following result, we need the separation relation on a circle, see Section~\ref{Circle}.

\begin{lemma}\label{corollary:cycl:2}
Let $X,Y,Z$ be distinct points of a circle $S_r^1$, $P\subset S_r^1$ and $a,b,c,d\in\R$ be such that $a<b$ and $c<d$.
Let $f:[a,b]\to S_r^1\setminus\{Z\}$ and $g:[c,d]\to S_r^1\setminus\{Z\}$ be continuous functions
such that $g$ is monotonic and
\begin{enumerate}
\itemsep=5pt
\item\label{corollary:cycl:2:p:1} $f^{-1}(P)$ is finite;
\item\label{corollary:cycl:2:p:2} $X=f(a)=g(c)$ and $Y=f(b)=g(d)$;
\item\label{corollary:cycl:2:p:3} $Y\in P$;
\item\label{corollary:cycl:2:p:4} $XYf(t)Z$ for no $t\in[a,b]$.
\end{enumerate}
There exists an increasing map $q:g^{-1}(P)\to f^{-1}(P)$
such that
{\renewcommand{\labelenumi}{{\it(\roman{enumi})}}
\renewcommand{\theenumi}{{\rm(\roman{enumi})}}
\begin{enumerate}
\itemsep=5pt
\item\label{corollary:cycl:2:p:i} $f\circ q(t)=g(t)$ for any $t\in g^{-1}(P)$;
\item\label{corollary:cycl:2:p:ii} $Xf(t')g(t)Z$ as $t'\to q(t)^-$ for any $t\in  g^{-1}(P)\setminus\{c\}$;
\item\label{corollary:cycl:2:p:iii} $q(d)=b$;
\item\label{corollary:cycl:2:p:iv}  $X\in P\Rightarrow q(c)=a$.
\end{enumerate}}
\end{lemma}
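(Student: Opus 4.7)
The plan is to reduce the circle problem to a problem on the real line via a natural parametrization, then construct $q$ by reverse induction. First, invoking Propositions~\ref{proposition:cycl:3} and~\ref{proposition:cycl:4}, the image $\Phi := g([c,d])$ is the closed arc from $X$ to $Y$ not containing $Z$. I choose a natural chart $I: \Phi \ito [0, |\Phi|]$ with $I(X) = 0$, $I(Y) = |\Phi|$, so that $\tilde g := I \circ g$ is a continuous strictly increasing bijection $[c,d] \to [0, |\Phi|]$. Applying the trichotomy~(\ref{IV}) to the distinct points $X, Y, f(t), Z$ together with hypothesis~(4) shows that $f$ avoids the open arc from $Y$ to $Z$ not passing through $X$; combined with $f(t) \ne Z$, this places the image of $f$ in $\Omega := \Phi \cup J$, where $J$ is the open arc from $X$ to $Z$ not passing through $Y$. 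I extend $I$ to a homeomorphism $\hat I: \Omega \ito (-M, |\Phi|]$ with $\hat I(J) = (-M, 0)$, and set $\tilde f := \hat I \circ f$, a continuous function on $[a, b]$ satisfying $\tilde f(a) = 0$ and $\tilde f(b) = |\Phi|$.

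Next, I enumerate $g^{-1}(P) = \{u_1 < \cdots < u_N\}$; by hypothesis~(3), $u_N = d$. I construct $v_i := q(u_i)$ by reverse induction on $i$, starting with $v_N := b$. For the inductive step I split cases. If $u_i = c$ (which forces $i = 1$ and $X \in P$), set $v_1 := a$; this yields~(iv) directly, (i) since $f(a) = X = g(c)$, and the inequality $v_1 < v_2$ because $\tilde f(a) = 0 < \tilde g(u_2)$. Otherwise $\tilde g(u_i) > 0$ and I define
\[
v_i := \sup \bigl\{ t \in [a, v_{i+1}] : \tilde f(t) < \tilde g(u_i) \bigr\}.
\]
This set is nonempty (it contains $a$), and continuity together with $\tilde f(v_{i+1}) = \tilde g(u_{i+1}) > \tilde g(u_i)$ prevents accumulation at $v_{i+1}$, so $v_i < v_{i+1}$. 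The strict inequality $\tilde f(v_i) < \tilde g(u_i)$ would place $v_i$ in the open (in $[a, v_{i+1}]$) defining set, contradicting maximality; hence $\tilde f(v_i) = \tilde g(u_i) \in (0, |\Phi|)$, which gives $f(v_i) = g(u_i)$ and condition~(i).

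For condition~(ii), I show that $0 < \tilde f(t') < \tilde g(u_i)$ on some left-neighborhood of $v_i$. The finiteness of $f^{-1}(g(u_i)) \subset f^{-1}(P)$ forces $v_i$ to be an isolated zero of $\tilde f - \tilde g(u_i)$, so by continuity this difference has constant sign on each side of $v_i$ in a small deleted neighborhood; the supremum property of $v_i$ then forces the sign to be negative on the left. The lower bound $\tilde f > 0$ follows from continuity at $v_i$, where $\tilde f(v_i) = \tilde g(u_i) > 0$. Geometrically, the estimate $0 < \tilde f(t') < \tilde g(u_i)$ puts $f(t')$ on the open sub-arc of $\Phi$ between $X$ and $g(u_i)$, which is the side of the chord $X g(u_i)$ opposite to $Z$; this is exactly the content of $X f(t') g(u_i) Z$. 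The main obstacle I anticipate is the initial geometric step of extracting the linearization $\tilde f$ from hypothesis~(4), since accommodating the possibility that $f$ momentarily leaves $\Phi$ into $J$ is precisely what the negative half of the $\hat I$-range is designed to absorb; once the interval picture is in place, the sign-constancy argument near $v_i$ relies crucially on the finiteness of $f^{-1}(P)$ to preclude pathological oscillation of $\tilde f$ across the level $\tilde g(u_i)$.
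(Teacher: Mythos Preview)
Your proof is correct in outline and follows the same overall strategy as the paper: linearize by a chart that removes $Z$, then appeal to the intermediate value theorem. The paper's linearization is the stereographic projection $S^1_r\setminus\{Z\}\cong\R$ (so no separate handling of your arc $J$ is needed), and its construction of $q$ is more direct: set $q'(t)=\min f^{-1}(g(t))$, which by the intermediate value theorem is increasing and satisfies (i), (ii), (iv), then redefine $q(d):=b$ to force (iii). Your reverse induction with $v_i=\sup\{t\in[a,v_{i+1}]:\tilde f(t)<\tilde g(u_i)\}$ builds (iii) in from the start at the cost of a longer verification; note that in general your $q$ and the paper's differ (yours selects the last crossing below the previous anchor, the paper's selects the first crossing), so both are legitimate witnesses to the existential statement.

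One small gap: your argument for (ii) invokes ``the supremum property of $v_i$'' to determine the sign on the left, but $v_N=b$ is not defined by a supremum. For $i=N$ you should instead use that $\tilde f\le|\Phi|=\tilde g(u_N)$ everywhere (this is precisely what your containment $\im f\subset\Omega$ encodes), together with the finiteness of $f^{-1}(Y)\subset f^{-1}(P)$, to obtain $\tilde f<|\Phi|$ on a punctured left-neighborhood of $b$; the lower bound $\tilde f>0$ then follows by continuity exactly as in your general case.
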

\begin{proof}
Let us identify $S_r^1\setminus\{Z\}\cong\R$ by the stereographic projection from $Z$
in such a way that $g$ becomes increasing. Then $X<Y$,
the image of $g$ is the interval $[X,Y]$, \ref{corollary:cycl:2:p:4} means that $f(t)\le Y$ for any $t$
and $Xf(t')g(t)Z$ is equivalent to $f(t')<g(t)$.

First, we define a function $q':g^{-1}(P)\to f^{-1}(P)$ by the rule $q'(t)=\min f^{-1}(g(t))$.
By the intermediate value theorem, $q'$ is increasing and satisfies~\ref{corollary:cycl:2:p:i},
\ref{corollary:cycl:2:p:ii},~\ref{corollary:cycl:2:p:iv}.
To ensure~\ref{corollary:cycl:2:p:iii}, we define $q$ to be equal to $q'$
everywhere except $d$, where it is set to be equal to $b$.
\end{proof}

\begin{example}\label{example:5}
\rm Consider the following example (in the stereographic projection).

\def\uu{1pt}
\def\vv{0.06}

\begin{center}
\scalebox{0.7}{
\begin{tikzpicture}
\draw[blue] plot[smooth] coordinates {(0,0) (1,2) (2,-1) (3,2.98) (4,1) (4.5,2.99) (5,2) (5.5,3)};
\draw[red] plot[smooth] coordinates {(0,0) (2,1.2) (3.5,2.05) (5.5,3)};
\draw (-0.5,3)node[anchor=east,black]{$P_4$}--(6,3);
\draw (-0.5,0)node[anchor=east,black]{$P_1$}--(6,0);
\draw (-0.5,2.2)node[anchor=east,black]{$P_3$}--(6,2.2);
\draw (-0.5,1.1)node[anchor=east,black]{$P_2$}--(6,1.1);
\draw[purple,dashed] (0,-0.5)node[anchor=north,black]{$a$}--(0,0);
\draw[fill,purple] (0,0) circle(\vv);
\draw[blue,dashed] (0.42,-0.5)node[anchor=north,black]{$x$}--(0.42,1.1);
\draw[fill,blue] (0.42,1.1) circle(\vv);
\draw[blue,dashed] (2.74,-0.5)node[anchor=north,black]{$y$}--(2.74,2.2);
\draw[fill,blue] (2.74,2.2) circle(\vv);
\draw[purple,dashed] (5.5,-0.5)node[anchor=north,black]{$b$}--(5.5,3);
\draw[fill,purple] (5.5,3) circle(\vv);
\draw[red,dashed] (1.824,1.1)--(1.824,3.3)node[anchor=south,black]{$u$};
\draw[fill,red] (1.824,1.1) circle(\vv);
\draw[red,dashed] (3.804,2.2)--(3.804,3.3)node[anchor=south,black]{$v$};
\draw[fill,red] (3.804,2.2) circle(\vv);
\draw[blue,dashed] (3.04,-0.5)node[anchor=north,black]{$z$}--(3.04,3);
\draw[fill,blue] (3.04,3) circle(\vv);
\end{tikzpicture}}
\end{center}
\end{example}

\vspace{-10pt}

\noindent
Here $a=c$ and $b=d$,
$P\cap[X,Y]=\{P_1,P_2,P_3,P_4\}$,
the blue and red solid lines represent the graph of $f$ and $g$ respectively,
$g^{-1}(P)=\{a,u,v,b\}$ and $q'$ and $q$ are given by $a\mapsto a$, $u\mapsto x$, $v\mapsto y$, $b\mapsto z$
and $a\mapsto a$, $u\mapsto x$, $v\mapsto y$, $b\mapsto b$ respectively.

\subsection{Diagrams}\label{Diagrams} In this paper, we use plane diagrams that are finite horizontal compositions
$D=L_p(a)\cup M_{r_1}(b_1)\cup\cdots\cup M_{r_k}(b_k)\cup R_q(c)$ of the following diagrams
(where the numbers mean ordinates, the widths of all strips equal $1$ and the red lines are tilted $45$ degrees):

\medskip

\def\sl{6pt}

\def\tt{1.5pt}
\def\st{1pt}

\begin{center}

\scalebox{0.6}{
\begin{tikzpicture}
\draw[loosely dashed,line width=\st] (0,-2)--(0,1);
\draw[loosely dashed,line width=\st] (1,-2)--(1,1);
\draw[line width=\tt,red] (0.5,-0.5)--(1,0);
\draw (1,0)node[anchor=west]{$i$};
\draw (0,-0.5)node[anchor=east]{$i-\frac12$};
\draw (0.4,-2.6) node{$L_1(i-\frac12)$};
\end{tikzpicture}}
\quad\;
\scalebox{0.6}{
\begin{tikzpicture}
\draw[loosely dashed,line width=\st] (0,-2)--(0,1);
\draw[loosely dashed,line width=\st] (1,-2)--(1,1);
\draw[line width=\tt,red] (0.5,-0.5)--(1,-1);
\draw (1,-1)node[anchor=west]{$i-1$};
\draw (0,-0.5)node[anchor=east]{$i-\frac12$};
\draw (0.4,-2.6) node{$L_2(i-\frac12)$};
\end{tikzpicture}}
\quad\;
\scalebox{0.6}{
\begin{tikzpicture}
\draw[loosely dashed,line width=\st] (0,-2)--(0,1);
\draw[loosely dashed,line width=\st] (1,-2)--(1,1);
\draw[line width=\tt,red] (0,-1)--(1,0);
\draw (1,0)node[anchor=west]{$i$};
\draw (0,-1)node[anchor=east]{$i-1$};
\draw (0.4,-2.6) node{$M_1(i-\frac12)$};
\end{tikzpicture}}
\quad\;
\scalebox{0.6}{
\begin{tikzpicture}
\draw[loosely dashed,line width=\st] (0,-2)--(0,1);
\draw[loosely dashed,line width=\st] (1,-2)--(1,1);
\draw[line width=\tt,red] (1,-1)--(0,0);
\draw (1,-1)node[anchor=west]{$i-1$};
\draw (0,0)node[anchor=east]{$i$};
\draw (0.4,-2.6) node{$M_2(i-\frac12)$};
\end{tikzpicture}}
\quad\;
\scalebox{0.6}{
\begin{tikzpicture}
\draw[loosely dashed,line width=\st] (0,-2)--(0,1);
\draw[loosely dashed,line width=\st] (1,-2)--(1,1);
\draw[line width=\tt,red] (0.5,-0.5)--(0,-1);
\draw (1,-0.5)node[anchor=west]{$i-\frac12$};
\draw (0,-1)node[anchor=east]{$i-1$};
\draw (0.4,-2.6) node{$R_1(i-\frac12)$};
\end{tikzpicture}}
\quad\;
\scalebox{0.6}{
\begin{tikzpicture}
\draw[loosely dashed,line width=\st] (0,-2)--(0,1);
\draw[loosely dashed,line width=\st] (1,-2)--(1,1);
\draw[line width=\tt,red] (0.5,-0.5)--(0,0);
\draw (1,-0.5)node[anchor=west]{$i-\frac12$};
\draw (0,0)node[anchor=east]{$i$};
\draw (0.4,-2.6) node{$R_2(i-\frac12)$};
\end{tikzpicture}}
\end{center}

\vspace{-2pt}

\noindent
where $i\in\Z$, so that the red segments add up to continuous curves (in the metric topology).
Note that the case $k=0$ is possible. We say that $D$ {\it starts at ordinate} $a$.

We will consider all diagrams up to horizontal shifts thus ignoring the abscissas
and extending them infinitely to the right and to the left.
The {\it width} of a diagram is the length of the horizontal projection of its curve
(that is, its length divided by $\sqrt2$).

The horizontal line in such a diagram consisting of points with ordinate $i\in\Z$
is called the {\it $i$-wall} or simply a {\it wall}.
The spaces between adjacent walls are called {\it chambers}.
The camber between the $0$-wall and the $1$-wall is called the {\it fundamental chamber}.
It is usually shaded in our pictures.
Let us consider the following moves:

\def\tt{1.5pt}
\def\st{1pt}


\vspace{-7pt}

\begin{equation}\label{upper_move}
\scalebox{0.6}{
\begin{tikzpicture}[baseline=-33pt]
\draw[line width=\st] (-0.2,0)node[anchor=east]{$i+1$}--(7.6,0);
\draw[line width=\st] (-0.2,-1)node[anchor=east]{$1\le i$}--(7.6,-1);
\draw[line width=\st] (-0.2,-2)node[anchor=east]{$i-1$}--(7.6,-2);
\draw[line width=\tt,red] (0,-1)--(1,0) -- (2,-1) -- (3,0)--(3.3,-0.3);
\draw[line width=\tt,red] (5.1,-0.7)--(5.4,-1)--(6.4,0)--(7.4,-1);
\draw[fill,red] (3.8,-0.5) circle(0.02);
\draw[fill,red] (4.2,-0.5) circle(0.02);
\draw[fill,red] (4.6,-0.5) circle(0.02);
\end{tikzpicture}}
\quad
\to
\quad
\scalebox{0.6}{
\begin{tikzpicture}[baseline=-33pt]
\draw[line width=\st] (-0.2,0)--(7.6,0);
\draw[line width=\st] (-0.2,-1)--(7.6,-1);
\draw[line width=\st] (-0.2,-2)--(7.6,-2);
\draw[line width=\tt,red] (0,-1)--(1,-2) -- (2,-1) -- (3,-2)-- (3.3,-1.7);
\draw[fill,red] (3.8,-1.5) circle(0.02);
\draw[fill,red] (4.2,-1.5) circle(0.02);
\draw[fill,red] (4.6,-1.5) circle(0.02);
\draw[line width=\tt,red] (5.1,-1.3)--(5.4,-1)--(6.4,-2)--(7.4,-1) ;
\end{tikzpicture}}
\end{equation}

\begin{equation}\label{lower_move}
\scalebox{0.6}{
\begin{tikzpicture}[baseline=-33pt]
\draw[line width=\st] (-0.2,0) node[anchor=east]{$i+1$}--(7.6,0);
\draw[line width=\st] (-0.2,-1)node[anchor=east]{$0\ge i$}--(7.6,-1);
\draw[line width=\st] (-0.2,-2)node[anchor=east]{$i-1$}--(7.6,-2);
\draw[line width=\tt,red] (0,-1)--(1,-2) -- (2,-1) -- (3,-2)-- (3.3,-1.7);
\draw[fill,red] (3.8,-1.5) circle(0.02);
\draw[fill,red] (4.2,-1.5) circle(0.02);
\draw[fill,red] (4.6,-1.5) circle(0.02);
\draw[line width=\tt,red] (5.1,-1.3)--(5.4,-1)--(6.4,-2)--(7.4,-1) ;
\end{tikzpicture}}
\quad
\to
\quad
\scalebox{0.6}{
\begin{tikzpicture}[baseline=-33pt]
\draw[line width=\st] (-0.2,0) --(7.6,0);
\draw[line width=\st] (-0.2,-1)--(7.6,-1);
\draw[line width=\st] (-0.2,-2)--(7.6,-2);
\draw[line width=\tt,red] (0,-1)--(1,0) -- (2,-1) -- (3,0)--(3.3,-0.3);
\draw[line width=\tt,red] (5.1,-0.7)--(5.4,-1)--(6.4,0)--(7.4,-1);
\draw[fill,red] (3.8,-0.5) circle(0.02);
\draw[fill,red] (4.2,-0.5) circle(0.02);
\draw[fill,red] (4.6,-0.5) circle(0.02);
\end{tikzpicture}}
\end{equation}

\noindent
They can naturally be thought of as folds towards the fundamental chamber.

For a diagram of width $m$, we define its {\it signature} $\u{\epsilon}=(\epsilon_1,\ldots,\epsilon_m)$ by


\smallskip

\begin{center}
\scalebox{0.6}{
\begin{tikzpicture}[baseline=-6pt]
\draw[line width=\st] (0,0) --(2,0);
\draw[line width=\tt,red] (0.5,-0.5)--(1.5,0.5);
\end{tikzpicture}
}
$\Rightarrow\epsilon_i=1$
\qquad\qquad
\scalebox{0.6}{
\begin{tikzpicture}[baseline=-15pt]
\draw[line width=\st] (0,0) --(2,0);
\draw[line width=\tt,red] (0.5,-0.5)--(1,0);
\draw[line width=\tt,red] (1,0)--(1.5,-0.5);
\end{tikzpicture}
}
$\Rightarrow\epsilon_i=0$
\qquad\qquad
\scalebox{0.6}{
\begin{tikzpicture}[baseline=-23pt]
\draw[line width=\st] (0,-1) --(2,-1);
\draw[line width=\tt,red] (0.5,-0.5)--(1,-1);
\draw[line width=\tt,red] (1,-1)--(1.5,-0.5);
\end{tikzpicture}
}
$\Rightarrow\epsilon_i=0$
\end{center}
\smallskip
for the $i$-th intersection of the red line and the walls counted from the left.

We can rewrite moves~(\ref{upper_move}) and ~(\ref{lower_move}) in the language of signatures as follows:
\begin{equation}\label{eq:cycb6:3}
(\epsilon_1,\ldots,\epsilon_k,0,\ldots,0,\epsilon_l,\ldots,\epsilon_m)\to
(\epsilon_1,\ldots,1-\epsilon_k,0,\ldots,0,1-\epsilon_l,\ldots,\epsilon_m),
\end{equation}
where the left and the right intersections of the red curve with the $i$-wall are the $k$th and the $l$th
intersections 
respectively. Note that
\begin{equation}\label{eq:parity}
k+l\equiv0\pmod 2.
\end{equation}
This representation of double folds is however deficient in that we do not know if we fold towards or
away from the fundamental chamber.

%

The next proposition relates diagrams and paths on a circle. Its proof is a simple geometrical observation,
which we leave to the reader.

\begin{proposition}\label{proposition:cycb6:5}
Let $2\le n<\infty$, $\Omega$ be a closed arc in $S^1_{n/\pi}$,
and $g:[a,b]\to\Omega$ be a path with natural parametrization smooth outside $P_n$ such that
$g(a)$ and $g(b)$ belong to the centers of some arc chambers.
Let $I:\Omega\ito[c,d]$ be a natural chart such that $I(\Omega\cap P_n)\subset\Z$. Then the graph $\Gr(I\circ g)$ 
is a diagram\footnote{after extending it horizontally and forgetting the abscissas}
of width $m=b-a$. 

Suppose additionally that  $g(a)\in\mathscr F$. Let $\u{\epsilon}$ be the signature of $\Gr(I\circ g)$
and $\u{\Gamma}$ be the gallery generated by $g$.
Then $\u{\epsilon}\=\u{\gamma}\subset\u{s}$, where $\u{s}=(\c,\bar\c,\ldots)_m$ and $g(a+1/2)\in\H_\c$.
\end{proposition}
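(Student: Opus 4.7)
The plan has three parts: first establish that $\Gr(I\circ g)$ really is a diagram in the sense of Section~\ref{Diagrams}; second, identify the walls of the gallery with the intersections of the red curve with the horizontal lines and verify that the wall types alternate; third, verify that the local configuration at each wall matches the definitions of $\epsilon_i$ and $\gamma_i$. To begin, since $g$ is smooth outside $P_n$ with natural parametrization and $I$ is an isometry onto $[c,d]$, the composition $I\circ g$ is piecewise affine of slope $\pm 1$; its breakpoints lie in $g^{-1}(P_n)\subset I^{-1}(\Z)$, hence at integer ordinates. The endpoints $g(a)$ and $g(b)$ are centers of arc chambers (arcs of length~$1$), so their $I$-coordinates are half-integers. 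Concatenating an initial half-integer-to-integer $L$ piece, interior $M$ pieces of width~$1$, and a final integer-to-half-integer $R$ piece produces a diagram of total width $b-a=m$.

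Next let $\u{\Gamma}=(C_1,L_1,\ldots,C_m,L_m,C_{m+1})$ be the gallery generated by $g$ and let $\u{\gamma}\subset\u{s}$ be the associated subexpression under Proposition~\ref{proposition:3.5}. The $i$th intersection of $\Gr(I\circ g)$ with a horizontal line sits at the abscissa $x_i$ where $g(x_i)\in L_i$. The key claim is that each arc chamber is bounded by one panel on a wall of type $A$ and one on a wall of type $B$: for $\mathscr F$ this is transparent from the decomposition of $\overline{\mathscr C}$ in Section~\ref{Linear dihedral groups}, whose codimension-one strata $\mathscr C_A$ and $\mathscr C_B$ lie in $\H_A$ and $\H_B$ respectively, and by $\D$-equivariance it extends to every arc chamber $d\mathscr F$. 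Whether $g$ crosses or bounces at $L_i$, it continues in an arc chamber whose two boundary panels lie on $L_i$ and on one other wall of opposite type; since $g$ is monotonic between breakpoints, $L_{i+1}$ must be that other wall. Hence the wall types alternate; the initial type is $\mathfrak c$ by hypothesis, since $g(a)$ is the center of $\mathscr F$ and monotonicity forces $x_1=a+\tfrac12$, so $L_1=\H_{\mathfrak c}$. This gives $\u{s}=(\mathfrak c,\bar{\mathfrak c},\ldots)_m$.

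Finally, $\gamma_i=1$ iff $C_{i+1}\ne C_i$ iff $g$ crosses $L_i$, while $\gamma_i=0$ iff $g$ bounces at $L_i$. Crossing is equivalent to $I\circ g$ being smooth at $x_i$, producing in the diagram the straight pass-through configuration and hence $\epsilon_i=1$; bouncing produces a corner at the $i$-wall, giving $\epsilon_i=0$. Thus $\epsilon_i=\gamma_i$ for every $i$, as required. The principal subtlety is the type-alternation argument; once it is pinned down via the explicit form of $\overline{\mathscr C}$, the remainder is bookkeeping about local pictures.
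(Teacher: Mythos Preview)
The paper does not actually give a proof of this proposition: it states that ``its proof is a simple geometrical observation, which we leave to the reader.'' Your argument is a correct and careful way to fill in those details. The three-step structure---showing $I\circ g$ is piecewise affine of slope $\pm1$ with corners at integer ordinates and half-integer endpoints, establishing that the type $s_i$ alternates because each arc chamber $d\mathscr F$ is bounded by one panel on $\H_{dAd^{-1}}$ and one on $\H_{dBd^{-1}}$, and matching cross/bounce with $\gamma_i=1/0$ and with straight/corner giving $\epsilon_i=1/0$---is exactly what the author intends.

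One small sharpening: in your alternation argument you say ``$L_{i+1}$ must be that other wall,'' which is true, but the conclusion you need is slightly stronger, namely that $s_{i+1}=\bar s_i$ in the labelling of Proposition~\ref{corollary:8}. This follows because in the bounce case $d_{i+1}=d_i$ and the path next hits $d_i\mathscr C_{\bar s_i}$, while in the crossing case $d_{i+1}=d_is_i$ and the shared panel $d_i\mathscr C_{s_i}=d_is_i\mathscr C_{s_i}$ again has label $s_i$ from the new chamber, so the other panel has label $\bar s_i$. You have the right idea; making this explicit would remove any ambiguity about what ``type'' means for a wall that is not $\H_A$ or $\H_B$ itself.
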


\begin{corollary}\label{corollary:cycl:3}
Let $2\le n<\infty$, $D$ be a diagram starting at ordinate $1/2$ situated between the $1-n$-wall and the $n$-wall and
$D'$ be obtained from $D$ by a move~(\ref{upper_move}) or~(\ref{lower_move}). 
Let $\u{\epsilon}$ and $\u{\epsilon}'$ be the signatures of $D$ and $D'$ respectively and
$m$ be the width of $D$.

Let us choose a generator $\c\in\{\a,\b\}$, set $\u{s}=(\c,\bar\c,\ldots)_m$ and assume
$\u{\epsilon},\u{\epsilon}'\subset\u{s}$.
%
Then 
$\dist(\u{\epsilon},\u{\epsilon}')=2$
and $\u{\epsilon}>\u{\epsilon}'$ as elements of $\SubExpr(\u{s},w)$, where $w=\u{s}^{\u{\epsilon}}$.
\end{corollary}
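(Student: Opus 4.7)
The plan is to lift the moves~(\ref{upper_move}) and~(\ref{lower_move}) from the combinatorial level of diagrams to the geometric level of alcove walks, apply Proposition~\ref{proposition:cycb6:4} to identify them with double folds of galleries, and finish with Proposition~\ref{proposition:cycb6:6}.

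First I would realize $D$ as the graph $\Gr(I\circ g)$ of an alcove walk $g\colon[a,b]\to S_{n/\pi}^1$ furnished by Proposition~\ref{proposition:cycb6:5}: here $I$ is a natural chart on a closed arc containing $\im g$, whose existence is ensured by the hypothesis that $D$ lies between the $(1-n)$-wall and the $n$-wall. Let $\u{\Gamma}$ be the gallery generated by $g$, so that $\u{\epsilon}$ is its signature as a subexpression of $\u{s}=(\c,\bar\c,\ldots)_m$. A move~(\ref{upper_move}) or~(\ref{lower_move}) reflects the piece of the red curve lying between two particular intersections $x<x'$ with a common horizontal wall; both $x$ and $x'$ belong to $g^{-1}(P_n)$, because that horizontal wall is the $I$-image of $S_{n/\pi}^1\cap L$ for a line $L$ through the center. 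Let $i<j$ be the ranks of $x$ and $x'$ within $g^{-1}(P_n)$. Then $D'=\Gr(I\circ\f_{x,x'}g)$, and Proposition~\ref{proposition:cycb6:4} identifies the gallery generated by $\f_{x,x'}g$ with $\f_{i,j}\u{\Gamma}$, giving $\u{\epsilon}'=\f_{i,j}\u{\epsilon}$.

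The Hamming distance claim is then immediate: by formula~(\ref{eq:cycb6:3}), $\u{\epsilon}'$ differs from $\u{\epsilon}$ in exactly the two positions $k$ and $l$, and the applicability of $\f_{i,j}$ forces $\u{\epsilon}^{\max}=\u{\epsilon}'^{\max}=w$, so $\u{\epsilon}$ and $\u{\epsilon}'$ are two distinct elements of $\SubExpr(\u{s},w)$ at Hamming distance $2$.

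For the inequality $\u{\epsilon}>\u{\epsilon}'$, I would check that both moves push the reflected piece of the red curve towards the fundamental strip $[0,1]$: in~(\ref{upper_move}) the piece moves from the strip $[i,i+1]$ with $i\ge1$ into $[i-1,i]$, and~(\ref{lower_move}) is symmetric. Translating through the chart $I$, this says that $\f_{x,x'}g$ is a double fold of $g$ towards the fundamental chamber $\mathscr F$, so by the last sentence of Proposition~\ref{proposition:cycb6:4}, $\f_{i,j}\u{\Gamma}$ is a double fold of $\u{\Gamma}$ towards the fundamental chamber; Proposition~\ref{proposition:cycb6:6} then yields $\u{\epsilon}>\u{\epsilon}'$. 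The only delicate point is verifying that ``closer to the fundamental horizontal strip in the diagram'' is precisely ``on the same side of $\H_\tau$ as $\mathscr F$ on the circle,'' but this is a direct consequence of the construction relating $\Sigma_0(\D,\{A,B\})$ to $S^1_{n/\pi}$ in Section~\ref{Galleries_as_paths} and the definition of the natural chart $I$ in Section~\ref{Diagrams}, so I do not anticipate any real obstacle beyond a careful bookkeeping of this correspondence.
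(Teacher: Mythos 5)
Your proposal follows essentially the same route as the paper: lift the diagram through a natural chart to an alcove walk on $S^1_{n/\pi}$, recognize the move as a double fold of the walk towards the fundamental chamber (this is exactly where the restriction that the diagram lies between the $1-n$-wall and the $n$-wall enters), and conclude via Propositions~\ref{proposition:cycb6:4}, \ref{proposition:cycb6:6} and~\ref{proposition:cycb6:5}, with the distance claim disposed of by~(\ref{eq:cycb6:3}). The only cosmetic difference is that you cite Proposition~\ref{proposition:cycb6:5} for the existence of the lifting walk (it is stated in the opposite direction; the paper simply takes the lift as given and normalizes the chart by $I(p)=j$ so that the first crossing matches the chosen generator $\c$), which is harmless bookkeeping rather than a gap.
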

\begin{proof}
In view of~(\ref{eq:cycb6:3}) and~(\ref{eq:parity}), we need only to prove that $\u{\epsilon}>\u{\epsilon}'$.
Let $i$ be the index as in~(\ref{upper_move}) or~(\ref{lower_move}) and $j$ be the ordinate of the leftmost
intersection of the curve of $D$ with a wall. Obviously, $j\in\{0,1\}$ and
$1-n<i<n$.

Let $\mathscr C_\c\cap S^1_{n/\pi}=\{p\}$ and $\mathscr C_{\bar\c}\cap S^1_{n/\pi}=\{q\}$.
Let $\Omega=S^1_{n/\pi}\setminus\mathscr F^\op$ and $I:\Omega\ito[1-n,n]$ be the natural chart
such that $I(p)=j$ and $I(q)=1-j$. Note that $I(\Omega\cap P_n)\subset\Z$ and the fundamental chamber $\mathscr F$
is the open minor arc between $p$ and $q$.

By the hypothesis, there exists a path $g:[a,b]\to \Omega$ having natural parametrization smooth outside $P_n$
such that $D=\Gr(I\circ g)$. Note that $g(a)$ is then in the center of the fundamental chamber
and $g(a+1/2)=p\in\mathscr H_\c$.

Let $g^{-1}(P_n)=\{x_1<\cdots<x_m\}$.
Choosing $k$ and $l$ as in~(\ref{eq:cycb6:3}), we get $I(x_k)=I(x_l)=i$ and thus $D'=\Gr(I\circ g')$, where $g'=\f_{x_k,x_l}g$.
As, the image of $g$ is contained in $\Omega$, the path $g'$ is a double fold of $g$ towards the fundamental chamber. 
Denoting by $\u{\Gamma}$ and $\u{\Gamma}'$ the galleries generated by $g$ and $g'$ respectively,
we get by Proposition~\ref{proposition:cycb6:4} that $\u{\Gamma}'$ is also a double fold of $\u{\Gamma}$
towards the fundamental chamber. Hence
$
\u{\epsilon}\=\u{\gamma}>\u{\gamma}'\=\u{\epsilon}'
$
by Propositions~\ref{proposition:cycb6:6} and~\ref{proposition:cycb6:5}.
\end{proof}

\subsection{Cycles of the first type}\label{cycles_first} We assume here that $n\ge2$ is finite. Let us choose $\c\in\{\a,\b\}$
and $x,y\in\Z$ such that $1\le x\le n-1$ and $x+n+1\le y\le2n$. Let $\mathscr C_\c\cap S^1_{n/\pi}=\{p\}$ and $\mathscr C_{\bar\c}\cap S^1_{n/\pi}=\{q\}$.
We consider the closed arc
$$
\Omega=S^1_{n/\pi}\setminus\mathscr F^\op
$$
and the natural chart $I:\Omega\to[1-n,n]$ such that $I(p)=1$ and $I(q)=0$.
Then the point $O=I^{-1}(1/2)$ is the center of the fundamental arc chamber.
Let $g:[1/2,y+1/2]\to S^1_{n/\pi}$
be a smooth path having natural parametrization such that $g(1/2)=O$ and $g(1)=p$.

Let $\u{\Delta}$, $\u{\Sigma}$, $\u{\widetilde\Sigma}$ be the galleries generated by
the paths
$g$, $h=\f_{x+1,x+n+1}g$, $\tilde h=\f_{x,x+n}g$
respectively. These paths look as follows (the numbers representing the $I$-coordinates):

\medskip

\def\tt{1.1pt}
\def\st{0.8pt}

\begin{center}
\scalebox{0.9}{
\begin{tikzpicture}[baseline=-63pt]
\fill[gray!30] (0,0)--(-.4450418670, 1.949855825)--(.4450418670, 1.949855825);
\draw[line width=\st] (0,0) circle(1.5);
\draw[line width=\st,dashed] (-.4450418670, 1.949855825) -- (.4450418670,-1.949855825);
\draw[fill] (.3337814002, 1.462391868) circle(0.06)node[above=7pt,right=2pt]{$0$}node[below=10pt,right=-2pt]{$q$};
\draw[line width=\st,dashed] (.4450418670, 1.949855825) -- (-.4450418670,-1.949855825);
\draw[fill] (-.3337814002, 1.462391868) circle(0.06)node[above=7pt,left=2pt]{$1$}node[below=10pt,left=-2pt]{$p$};
\draw[line width=\st] (-1.801937736, .8677674786)--(1.801937736, -.8677674786);
\draw[fill] (-1.351453302, .6508256088) circle(0.06)node[above=12pt,left=-2pt]{$x$};
\draw[fill] (1.351453302, -.6508256088) circle(0.06)node[below=12pt,right=-2pt]{$x-n$};

\draw[line width=\st] (-2, 0)--(2, 0);
\draw[fill] (-1.5, 0) circle(0.06)node[left=12pt]{$x+1$};
\draw[fill] (1.5, 0) circle(0.06)node[right=12pt]{$x-n+1$};

\draw[fill] (1.172747224, .9352347028) circle(0.06)node[above=7pt,right=5pt]{$y-2n+\frac12$};
\draw[fill] (.3337814002, -1.462391868) circle(0.06);
\draw[fill] (-.3337814002, -1.462391868) circle(0.06)node[below=7pt,left=2pt]{$n$};
\draw[fill] (-.3337814002, -1.462391868) circle(0.06)node[below=7pt,right=24pt]{$1-n$};

\draw[-{Stealth[length=\sl]},line width=\tt,red] (0,1.6) arc (90:398.5714286:1.6);

\end{tikzpicture}}
\qquad
\scalebox{0.9}{
\begin{tikzpicture}[baseline=-63pt]
\fill[gray!30] (0,0)--(-.4450418670, 1.949855825)--(.4450418670, 1.949855825);
\draw[line width=\st] (0,0) circle(1.5);
\draw[line width=\st,dashed] (-.4450418670, 1.949855825) -- (.4450418670,-1.949855825);
\draw[fill] (.3337814002, 1.462391868) circle(0.06);
\draw[line width=\st,dashed] (.4450418670, 1.949855825) -- (-.4450418670,-1.949855825);
\draw[fill] (-.3337814002, 1.462391868) circle(0.06);
\draw[line width=\st] (-1.801937736, .8677674786)--(1.801937736, -.8677674786);
\draw[fill] (-1.351453302, .6508256088) circle(0.06);
\draw[fill] (1.351453302, -.6508256088) circle(0.06);

\draw[line width=\st] (-2, 0)--(2, 0);
\draw[fill] (-1.5, 0) circle(0.06);
\draw[fill] (1.5, 0) circle(0.06);

\draw[fill] (1.172747224, .9352347028) circle(0.06);
\draw[fill] (.3337814002, -1.462391868) circle(0.06);
\draw[fill] (-.3337814002, -1.462391868) circle(0.06);
\draw[fill] (-.3337814002, -1.462391868) circle(0.06);

\draw[-{Stealth[length=\sl]},line width=\tt,red] (0,1.6) arc (90:180:1.6);

\draw[-{Stealth[length=\sl]},line width=\tt,red] (-1.39, 0) arc (180:0:1.39);

\draw[-{Stealth[length=\sl]},line width=\tt,red] (1.6, 0) arc (0:38.5714286:1.6);

\end{tikzpicture}}
\qquad
\scalebox{0.9}{
\begin{tikzpicture}[baseline=-63pt]
\fill[gray!30] (0,0)--(-.4450418670, 1.949855825)--(.4450418670, 1.949855825);
\draw[line width=\st] (0,0) circle(1.5);
\draw[line width=\st,dashed] (-.4450418670, 1.949855825) -- (.4450418670,-1.949855825);
\draw[fill] (.3337814002, 1.462391868) circle(0.06);
\draw[line width=\st,dashed] (.4450418670, 1.949855825) -- (-.4450418670,-1.949855825);
\draw[fill] (-.3337814002, 1.462391868) circle(0.06);
\draw[line width=\st] (-1.801937736, .8677674786)--(1.801937736, -.8677674786);
\draw[fill] (-1.351453302, .6508256088) circle(0.06);
\draw[fill] (1.351453302, -.6508256088) circle(0.06);

\draw[line width=\st] (-2, 0)--(2, 0);
\draw[fill] (-1.5, 0) circle(0.06);
\draw[fill] (1.5, 0) circle(0.06);

\draw[fill] (1.172747224, .9352347028) circle(0.06);
\draw[fill] (.3337814002, -1.462391868) circle(0.06);
\draw[fill] (-.3337814002, -1.462391868) circle(0.06);
\draw[fill] (-.3337814002, -1.462391868) circle(0.06);

\draw[-{Stealth[length=\sl]},line width=\tt,red] (0,1.6) arc (90:154.2857143:1.6);

\draw[-{Stealth[length=\sl]},line width=\tt,red] (-1.252346726, .6030983975) arc (154.2857143:-25.71428571:1.39);

\draw[-{Stealth[length=\sl]},line width=\tt,red] (1.441550189, -.6942139821) arc (-25.71428571:38.5714286:1.6);

\end{tikzpicture}}
\end{center}
\noindent
Here the actual paths are the radial projections of the red lines to the circle.
Note the images of the last two paths are contained in $\Omega$.

By Proposition~\ref{proposition:cycb6:4}, we get $\u{\Sigma}=\f_{x+1,x+n+1}\u{\Gamma}$ and $\u{\widetilde\Sigma}=\f_{x,x+n}\u{\Gamma}$.
Clearly, $\u{\delta}=\u{1}^y\subset\u{t}=(\c,\bar\c,\ldots)_y$.
By Proposition~\ref{proposition:cycb6:6}, we get
$$
\u{\sigma}=(1,\ldots,\underbrace{0}_{x+1},\ldots,\underbrace{0}_{x+n+1},\ldots,1),\quad
\u{\tilde\sigma}=(1,\ldots,\underbrace{0}_{x},\ldots,\underbrace{0}_{x+n},\ldots,1).
$$
As both paths $h$ and $\tilde h$ are double folds of $g$ towards the fundamental chamber,
the galleries $\u{\Sigma}$ and $\u{\widetilde\Sigma}$ are also double folds of $\u{\Delta}$ towards
the fundamental chamber by Proposition~\ref{proposition:cycb6:4}. Hence
\begin{equation}\label{eq:cycb6:5}
\u{\sigma}<\u{\delta},\quad \u{\tilde\sigma}<\u{\delta}
\end{equation}
by Proposition~\ref{proposition:cycb6:6}. 
By Proposition~\ref{proposition:cycb6:5}, the diagrams $D=\Gr(I\circ h)$
and $\widetilde D=\Gr(I\circ \tilde h)$ have width $y$ and
signatures $\u{\sigma}$ and $\u{\tilde\sigma}$ respectively.
The diagram $D$ can be reduced as follows\footnote{Here and in what follows, ``reduced'' means obtained
by moves~(\ref{upper_move}) and~(\ref{lower_move}). The dashed red line
represents an imaginary rectangle that helps keep track of transformations of our paths}:

\def\tt{1.3pt}
\def\st{0.9pt}

\medskip

\begin{center}
\scalebox{0.7}{
\begin{tikzpicture}[baseline=-63pt]
\fill[gray!30] (-0.9,-1.5)--(5.8,-1.5)--(5.8,-2)--(-0.9,-2)--(-0.9,-1.5);
\draw[line width=\st] (-0.9,-1.5)--(5.8,-1.5);
\draw[line width=\st] (-0.9,-2)--(5.8,-2);

\draw[line width=\tt,red] (-0.75,-1.75)--(1,0)--(4.5,-3.5)--(5.6,-2.4);

\draw[line width=\st] (-0.9,-0)node[anchor=east]{$x+1$}--(5.8,-0);
\draw[line width=\st] (-0.9,-0.5)node[anchor=east]{$x$}--(5.8,-0.5);
\draw[line width=\st] (-0.9,-4)node[anchor=east]{$x-n$}--(5.8,-4);
\draw[line width=\st] (-0.9,-3.5)node[anchor=east]{$x-n+1$}--(5.8,-3.5);
\draw[line width=\st] (-0.9,-2.6)node[anchor=east]{$y-2n$}--(5.8,-2.6);

\draw[line width=\tt,red,dashed] (0.5,-0.5)--(1,0)--(4.5,-3.5)--(4,-4)--(0.5,-0.5);

\end{tikzpicture}}
$\quad\to\quad$
\scalebox{0.7}{
\begin{tikzpicture}[baseline=-63pt]
\fill[gray!30] (-0.9,-1.5)--(5.8,-1.5)--(5.8,-2)--(-0.9,-2)--(-0.9,-1.5);
\draw[line width=\st] (-0.9,-1.5)--(5.8,-1.5);
\draw[line width=\st] (-0.9,-2)--(5.8,-2);

\draw[line width=\tt,red] (-0.75,-1.75)--(0.5,-0.5)--(1,-1)--(1.5,-0.5)--(4.5,-3.5)--(5.6,-2.4);

\draw[line width=\st] (-0.9,-0)--(5.8,-0);
\draw[line width=\st] (-0.9,-0.5)--(5.8,-0.5);
\draw[line width=\st] (-0.9,-4)--(5.8,-4);
\draw[line width=\st] (-0.9,-3.5)--(5.8,-3.5);
\draw[line width=\st] (-0.9,-2.6)--(5.8,-2.6);

\draw[line width=\tt,red,dashed] (0.5,-0.5)--(1,0)--(4.5,-3.5)--(4,-4)--(0.5,-0.5);
\end{tikzpicture}}
$\quad\to\;\cdots$
\end{center}


\begin{center}
$\cdots\;\to$
\scalebox{0.7}{
\begin{tikzpicture}[baseline=-63pt]
\fill[gray!30] (-0.9,-1.5)--(5.8,-1.5)--(5.8,-2)--(-0.9,-2)--(-0.9,-1.5);
\draw[line width=\st] (-0.9,-1.5)--(5.8,-1.5);
\draw[line width=\st] (-0.9,-2)--(5.8,-2);

\draw[line width=\tt,red] (-0.75,-1.75)--(0.5,-0.5)--(1.5,-1.5)--(2,-1)--(4.5,-3.5)--(5.6,-2.4);

\draw[line width=\st] (-0.9,-0)node[anchor=east]{$x+1$}--(5.8,-0);
\draw[line width=\st] (-0.9,-0.5)node[anchor=east]{$x$}--(5.8,-0.5);
\draw[line width=\st] (-0.9,-4)node[anchor=east]{$x-n$}--(5.8,-4);
\draw[line width=\st] (-0.9,-3.5)node[anchor=east]{$x-n+1$}--(5.8,-3.5);
\draw[line width=\st] (-0.9,-2.6)node[anchor=east]{$y-2n$}--(5.8,-2.6);

\draw[line width=\tt,red,dashed] (0.5,-0.5)--(1,0)--(4.5,-3.5)--(4,-4)--(0.5,-0.5);

\end{tikzpicture}}
$\quad\to\quad$
\scalebox{0.7}{
\begin{tikzpicture}[baseline=-63pt]
\fill[gray!30] (-0.9,-1.5)--(5.8,-1.5)--(5.8,-2)--(-0.9,-2)--(-0.9,-1.5);
\draw[line width=\st] (-0.9,-1.5)--(5.8,-1.5);
\draw[line width=\st] (-0.9,-2)--(5.8,-2);

\draw[line width=\tt,red] (-0.75,-1.75)--(0.5,-0.5)--(2,-2)--(2.5,-1.5)--(4.5,-3.5)--(5.6,-2.4);

\draw[line width=\st] (-0.9,-0)--(5.8,-0);
\draw[line width=\st] (-0.9,-0.5)--(5.8,-0.5);
\draw[line width=\st] (-0.9,-4)--(5.8,-4);
\draw[line width=\st] (-0.9,-3.5)--(5.8,-3.5);
\draw[line width=\st] (-0.9,-2.6)--(5.8,-2.6);

\draw[line width=\tt,red,dashed] (0.5,-0.5)--(1,0)--(4.5,-3.5)--(4,-4)--(0.5,-0.5);

\end{tikzpicture}}
\end{center}

\medskip
\noindent
This process takes $x$ steps and is described by the signatures
$\u{\sigma}^{(x+1)},\ldots,\u{\sigma}^{(1)}$ given by
$$
\u{\sigma}^{(i)}=
\left\{
\begin{array}{ll}
\u{\sigma}&\text{if }i=x+1;\\[6pt]
(\ldots\,\underbrace{0}_{x}\;\;\;\ldots\!\!\!\underbrace{0,0}_{2x+1-i,2x+2-i}\!\!\!\ldots\;\;\;\underbrace{0}_{x+n+1}\ldots)&\text{if }1\le i\le x.
\end{array}
\right.
$$
Here and in what follows, we indicate only the positions of zeros.

Similarly, the diagram $\widetilde D$ can be reduced to the same diagram as follows:

\medskip

\begin{center}
\scalebox{0.7}{
\begin{tikzpicture}[baseline=-73pt]
\fill[gray!30] (-0.9,-1.5)--(5.8,-1.5)--(5.8,-2)--(-0.9,-2)--(-0.9,-1.5);
\draw[line width=\st] (-0.9,-1.5)--(5.8,-1.5);
\draw[line width=\st] (-0.9,-2)--(5.8,-2);

\draw[line width=\tt,red] (-0.75,-1.75)--(0.5,-0.5)--(4,-4)--(5.6,-2.4);
\draw[line width=\tt,red,dashed] (0.5,-0.5)--(1,0)--(4.5,-3.5);

\draw[line width=\st] (-0.9,-0)node[anchor=east]{$x+1$}--(5.8,-0);
\draw[line width=\st] (-0.9,-0.5)node[anchor=east]{$x$}--(5.8,-0.5);
\draw[line width=\st] (-0.9,-4)node[anchor=east]{$x-n$}--(5.8,-4);
\draw[line width=\st] (-0.9,-3.5)node[anchor=east]{$x-n+1$}--(5.8,-3.5);
\draw[line width=\st] (-0.9,-2.6)node[anchor=east]{$y-2n$}--(5.8,-2.6);

\end{tikzpicture}}
$\quad\to\quad$
\scalebox{0.7}{
\begin{tikzpicture}[baseline=-73pt]
\fill[gray!30] (-0.9,-1.5)--(5.8,-1.5)--(5.8,-2)--(-0.9,-2)--(-0.9,-1.5);
\draw[line width=\st] (-0.9,-1.5)--(5.8,-1.5);
\draw[line width=\st] (-0.9,-2)--(5.8,-2);

\draw[line width=\tt,red] (-0.75,-1.75)--(0.5,-0.5)--(3.5,-3.5)--(4,-3)--(4.5,-3.5)--(5.6,-2.4);
\draw[line width=\tt,red,dashed] (0.5,-0.5)--(1,0)--(4,-3);
\draw[line width=\tt,red,dashed] (3.5,-3.5)--(4,-4)--(4.5,-3.5);

\draw[line width=\st] (-0.9,-0)--(5.8,-0);
\draw[line width=\st] (-0.9,-0.5)--(5.8,-0.5);
\draw[line width=\st] (-0.9,-4)--(5.8,-4);
\draw[line width=\st] (-0.9,-3.5)--(5.8,-3.5);
\draw[line width=\st] (-0.9,-2.6)--(5.8,-2.6);
\end{tikzpicture}}
$\quad\to\;\cdots$
\end{center}


\smallskip

\smallskip

\begin{center}
$\cdots\;\to$
\scalebox{0.7}{
\begin{tikzpicture}[baseline=-63pt]
\fill[gray!30] (-0.9,-1.5)--(5.8,-1.5)--(5.8,-2)--(-0.9,-2)--(-0.9,-1.5);
\draw[line width=\st] (-0.9,-1.5)--(5.8,-1.5);
\draw[line width=\st] (-0.9,-2)--(5.8,-2);

\draw[line width=\tt,red] (-0.75,-1.75)--(0.5,-0.5)--(2.5,-2.5)--(3,-2)--(4.5,-3.5)--(5.6,-2.4);

\draw[line width=\tt,red,dashed] (0.5,-0.5)--(1,0)--(3,-2);
\draw[line width=\tt,red,dashed] (2.5,-2.5)--(4,-4)--(4.5,-3.5);

\draw[line width=\st] (-0.9,-0)node[anchor=east]{$x+1$}--(5.8,-0);
\draw[line width=\st] (-0.9,-0.5)node[anchor=east]{$x$}--(5.8,-0.5);
\draw[line width=\st] (-0.9,-4)node[anchor=east]{$x-n$}--(5.8,-4);
\draw[line width=\st] (-0.9,-3.5)node[anchor=east]{$x-n+1$}--(5.8,-3.5);
\draw[line width=\st] (-0.9,-2.6)node[anchor=east]{$y-2n$}--(5.8,-2.6);

\end{tikzpicture}}
$\quad\to\quad$
\scalebox{0.7}{
\begin{tikzpicture}[baseline=-63pt]
\fill[gray!30] (-0.9,-1.5)--(5.8,-1.5)--(5.8,-2)--(-0.9,-2)--(-0.9,-1.5);
\draw[line width=\st] (-0.9,-1.5)--(5.8,-1.5);
\draw[line width=\st] (-0.9,-2)--(5.8,-2);

\draw[line width=\tt,red] (-0.75,-1.75)--(0.5,-0.5)--(2,-2)--(2.5,-1.5)--(4.5,-3.5)--(5.6,-2.4);

\draw[line width=\tt,red,dashed] (0.5,-0.5)--(1,0)--(4.5,-3.5)--(4,-4)--(0.5,-0.5);

\draw[line width=\st] (-0.9,-0)--(5.8,-0);
\draw[line width=\st] (-0.9,-0.5)--(5.8,-0.5);
\draw[line width=\st] (-0.9,-4)--(5.8,-4);
\draw[line width=\st] (-0.9,-3.5)--(5.8,-3.5);
\draw[line width=\st] (-0.9,-2.6)--(5.8,-2.6);

\end{tikzpicture}}
\end{center}

\medskip
\noindent
This process takes $n-x$ steps and is described by the signatures
$\u{\sigma}^{(x-n)},\ldots,\u{\sigma}^{(0)}$ given by
$$
\u{\sigma}^{(i)}=
\left\{
\begin{array}{ll}
\u{\tilde\sigma}&\text{if }i=x-n;\\[6pt]
(\ldots\,\underbrace{0}_{x}\;\;\;\ldots\!\!\!\underbrace{0,0}_{2x-i,2x+1-i}\!\!\!\ldots\,\,\,\underbrace{0}_{x+n+1}\ldots)&\text{if }x-n<i\le0.
\end{array}
\right.
$$

Let $w=(\c\bar\c\cdots)_y$. We get the following cycle in $\Sub(\u{t},w)$:
$$
\begin{tikzcd}[row sep=30pt]
&\u{\delta}=\u{1}^y\arrow[no head]{rd}\arrow[no head]{ld}&\\[-10pt]
\u{\sigma}^{(x+1)}\arrow[no head,dashed]{d}&&\u{\sigma}^{(x-n)}\arrow[no head,dashed]{d}\\
\u{\sigma}^{(2)}\arrow[no head]{rd}&&\u{\sigma}^{(-1)}\arrow[no head]{ld}\\[-10pt]
&\u{\sigma}^{(1)}=\u{\sigma}^{(0)}&
\end{tikzcd}
$$
of length $n+2$ with maximal vertex $\u{\delta}$.
Indeed, the virtices in the
left path followed from top to down
form a chain and
decrease by~(\ref{eq:cycb6:5}) and Corollary~\ref{corollary:cycl:3}.
Similarly, the vertices in the
right path 
form a chain and decrease.
Therefore, the vertices in each path do not coincide.
The vertices from different paths except the initial ones also do not coincide,
as can be directly seen from the above diagrams (or their sigmatures). We denote this cycle by $\Cyc^1_\c(x,y)$.

\subsection{Cycles of the second type}\label{cycles_second} Let $n$, $\c$, $p$, $q$, $\Omega$, $I$, $O$
be as in the previous section and $x,y\in\Z$ be such that $1\le x<y\le n$.
Let $g_1:[1/2,x+n]\to S^1_{n/\pi}$
and $g_2:[x+n,2x-y+2n+1/2]\to S^1_{n/\pi}$ be smooth paths
having natural parametrizations such that
$$
g_1(\tfrac12)=O, \quad I\circ g_1(x+n)=I\circ g_2(x+n)=x-n,\quad I\circ g_2(2x-y+2n+\tfrac12)=y-\tfrac12.
$$
Concatenating, we get a path $g=g_1\cup g_2$ having natural parametrization
that is smooth everywhere except at $x+n$.
Let $\u{\Delta}$, $\u{\Sigma}$, $\u{\widetilde\Sigma}$, $\u{\widehat\Sigma}$ be the galleries generated by
the paths
$$
g,\quad h=\f_{y,2x-y+2n}g,\quad \tilde h=\f_{x,x+n}g,\quad \hat h=\f_{2x-y+n,2x+2n-y}\tilde h
$$
respectively. These paths look as follows (the numbers representing the $I$-coordinates):

\def\tt{1.1pt}
\def\st{0.8pt}

\medskip

\begin{center}
\!\!\!\!\!\!
\scalebox{0.9}{
\begin{tikzpicture}[baseline=-63pt]
\fill[gray!30] (0,0)--(-.4450418670, 1.949855825)--(.4450418670, 1.949855825);
\draw[line width=\st] (0,0) circle(1.5);
\draw[line width=\st,dashed] (-.4450418670, 1.949855825) -- (.4450418670,-1.949855825);
\draw[fill] (.3337814002, 1.462391868) circle(0.06)node[above=7pt,right=2pt]{$0$}node[below=10pt,right=-2pt]{$q$};
\draw[line width=\st,dashed] (.4450418670, 1.949855825) -- (-.4450418670,-1.949855825);
\draw[fill] (-.3337814002, 1.462391868) circle(0.06)node[above=7pt,left=2pt]{$1$}node[below=10pt,left=-2pt]{$p$};
\draw[line width=\st] (-1.246979604, 1.563662964)--(1.246979604, -1.563662964);
\draw[fill] (-.9352347033, 1.172747223) circle(0.06)node[above=2pt,left=6pt]{$x$};
\draw[fill] (.9352347033, -1.172747223) circle(0.06)node[above=1pt,right=6pt]{$x-n$};


\draw[fill] (-.9352347012, -1.172747225) circle(0.06)node[below=12pt,left=-7pt]{$y$};
\draw[fill] (-1.172747224, -.9352347024) circle(0.06)node[below=1pt,left=6pt]{$y-\frac12$};

\draw[fill] (.9352347012, 1.172747225) circle(0.06)node[below=1pt,right=6pt]{$y-n$};
\draw[line width=\st] (-1.246979602, -1.563662967)--(1.246979602, 1.563662967);

\draw[fill] (.3337814002, -1.462391868) circle(0.06);
\draw[fill] (-.3337814002, -1.462391868) circle(0.06)node[below=9pt,right=-2pt]{$n$};
\draw[fill] (-.3337814002, -1.462391868) circle(0.06)node[below=8pt,right=20pt]{$1-n$};

\draw[-{Stealth[length=\sl]},line width=\tt,red] (0,1.6) arc (90:308.5714286:1.6);
\draw[-{Stealth[length=\sl]},line width=\tt,red] (.8666508244, -1.086745761) arc (308.5714286:218.5714286:1.39);
\end{tikzpicture}}
\!\!\!\!\!
\scalebox{0.9}{
\begin{tikzpicture}[baseline=-63pt]
\fill[gray!30] (0,0)--(-.4450418670, 1.949855825)--(.4450418670, 1.949855825);
\draw[line width=\st] (0,0) circle(1.5);
\draw[line width=\st,dashed] (-.4450418670, 1.949855825) -- (.4450418670,-1.949855825);
\draw[fill] (.3337814002, 1.462391868) circle(0.06);
\draw[line width=\st,dashed] (.4450418670, 1.949855825) -- (-.4450418670,-1.949855825);
\draw[fill] (-.3337814002, 1.462391868) circle(0.06);
\draw[line width=\st] (-1.246979604, 1.563662964)--(1.246979604, -1.563662964);
\draw[fill] (-.9352347033, 1.172747223) circle(0.06);
\draw[fill] (.9352347033, -1.172747223) circle(0.06);

\draw[fill] (-.9352347012, -1.172747225) circle(0.06);
\draw[fill] (-1.172747224, -.9352347024) circle(0.06);

\draw[fill] (.9352347012, 1.172747225) circle(0.06);
\draw[line width=\st] (-1.246979602, -1.563662967)--(1.246979602, 1.563662967);

\draw[fill] (.3337814002, -1.462391868) circle(0.06);
\draw[fill] (-.3337814002, -1.462391868) circle(0.06);
\draw[fill] (-.3337814002, -1.462391868) circle(0.06);
\draw[fill] (-1.351453302, .6508256088) circle(0.06)node[left=9pt]{$2y-x-n$};

\draw[-{Stealth[length=\sl]},line width=\tt,red] (0,1.75) arc (90:231.4285714:1.75);
\draw[-{Stealth[length=\sl]},line width=\tt,red] (-.9975836813, -1.250930373) arc (231.4285714:154.2857143:1.6);
\draw[-{Stealth[length=\sl]},line width=\tt,red] (-1.252346726, .6030983975) arc (154.2857143:231.4285714:1.39);
\draw[-{Stealth[length=\sl]},line width=\tt,red] (-.7668924550, -.9616527245) arc (231.4285714:218.5714286:1.23);
\end{tikzpicture}}
\;\;
\scalebox{0.9}{
\begin{tikzpicture}[baseline=-63pt]
\fill[gray!30] (0,0)--(-.4450418670, 1.949855825)--(.4450418670, 1.949855825);
\draw[line width=\st] (0,0) circle(1.5);
\draw[line width=\st,dashed] (-.4450418670, 1.949855825) -- (.4450418670,-1.949855825);
\draw[fill] (.3337814002, 1.462391868) circle(0.06);
\draw[line width=\st,dashed] (.4450418670, 1.949855825) -- (-.4450418670,-1.949855825);
\draw[fill] (-.3337814002, 1.462391868) circle(0.06);
\draw[line width=\st] (-1.246979604, 1.563662964)--(1.246979604, -1.563662964);
\draw[fill] (-.9352347033, 1.172747223) circle(0.06);
\draw[fill] (.9352347033, -1.172747223) circle(0.06);

\draw[fill] (-.9352347012, -1.172747225) circle(0.06);
\draw[fill] (-1.172747224, -.9352347024) circle(0.06);

\draw[fill] (.9352347012, 1.172747225) circle(0.06);
\draw[line width=\st] (-1.246979602, -1.563662967)--(1.246979602, 1.563662967);

\draw[fill] (.3337814002, -1.462391868) circle(0.06);
\draw[fill] (-.3337814002, -1.462391868) circle(0.06);
\draw[fill] (-.3337814002, -1.462391868) circle(0.06);

\draw[-{Stealth[length=\sl]},line width=\tt,red] (0,1.6) arc (90:128.5714286:1.6);
\draw[-{Stealth[length=\sl]},line width=\tt,red] (-.8666508251, 1.086745760) arc (128.5714286:-141.4285714:1.39);
\end{tikzpicture}}
\;\;
\scalebox{0.9}{
\begin{tikzpicture}[baseline=-63pt]
\fill[gray!30] (0,0)--(-.4450418670, 1.949855825)--(.4450418670, 1.949855825);
\draw[line width=\st] (0,0) circle(1.5);
\draw[line width=\st,dashed] (-.4450418670, 1.949855825) -- (.4450418670,-1.949855825);
\draw[fill] (.3337814002, 1.462391868) circle(0.06);
\draw[line width=\st,dashed] (.4450418670, 1.949855825) -- (-.4450418670,-1.949855825);
\draw[fill] (-.3337814002, 1.462391868) circle(0.06);
\draw[line width=\st] (-1.246979604, 1.563662964)--(1.246979604, -1.563662964);
\draw[fill] (-.9352347033, 1.172747223) circle(0.06);
\draw[fill] (.9352347033, -1.172747223) circle(0.06);

\draw[fill] (-.9352347012, -1.172747225) circle(0.06);
\draw[fill] (-1.172747224, -.9352347024) circle(0.06);

\draw[fill] (.9352347012, 1.172747225) circle(0.06);
\draw[line width=\st] (-1.246979602, -1.563662967)--(1.246979602, 1.563662967);

\draw[fill] (.3337814002, -1.462391868) circle(0.06);
\draw[fill] (-.3337814002, -1.462391868) circle(0.06);
\draw[fill] (-.3337814002, -1.462391868) circle(0.06);

\draw[-{Stealth[length=\sl]},line width=\tt,red] (0,1.6) arc (90:128.5714286:1.6);
\draw[-{Stealth[length=\sl]},line width=\tt,red] (-.8541810290, 1.071109131) arc (128.5714286:51.42857143:1.37);
\draw[-{Stealth[length=\sl]},line width=\tt,red] (.7668924562, .9616527235) arc (51.42857143:231.4285714:1.23);
\draw[-{Stealth[length=\sl]},line width=\tt,red] (-.8666508231, -1.086745762) arc (231.4285714:218.5714286:1.39);
\end{tikzpicture}}
\end{center}

\noindent
Here again the actual paths are the radial projections. Note that the point 
with coordinate $2x-y+n$
in the second picture
is drawn for the case $2y-x-n>x$. The images of the second and the forth paths are contained in $\Omega$.
By Proposition~\ref{proposition:cycb6:4}, we get
$$
\u{\Sigma}=\f_{y,2x-y+2n}\u{\Delta},\quad \u{\widetilde\Sigma}=\f_{x,x+n}\u{\Delta},\quad \u{\widehat\Sigma}=\f_{2x-y+n,2x-y+2n}\u{\widetilde\Sigma}.
$$
Obviously, $\u{\delta}\subset\u{t}=(\c,\bar\c,\ldots)_{2x-y+2n}$. By construction and
Proposition~\ref{proposition:cycb6:6}, we get
$$
\u{\delta}=(1,\ldots,\underbrace{0}_{x+n},\ldots,1),\qquad \u{\sigma}=(1,\ldots,\underbrace{0}_{y},\ldots,\underbrace{0}_{x+n} ,\ldots,1,0),
$$
$$
\u{\tilde\sigma}=(1,\ldots,\underbrace{0}_{x},\ldots,1),\quad
\u{\hat\sigma}=(1,\ldots,\underbrace{0}_{x},\ldots,\underbrace{0}_{2x-y+n},\ldots,1,0).
$$
As $h$ and $\tilde h$ are double folds of $g$ towards the fundamental chamber and
$\hat h$ is a double fold of $\tilde h$ also towards the fundamental chamber,
the following inequalities hold:
\begin{equation}\label{eq:cycb6:6}
\u{\sigma}<\u{\delta},\quad \u{\hat\sigma}<\u{\tilde\sigma}<\u{\delta}
\end{equation}
by Propositions~\ref{proposition:cycb6:4} and~\ref{proposition:cycb6:6}.
By Proposition~\ref{proposition:cycb6:5}, the graphs
$D=\Gr(I\circ h)$ and $\widehat D=\Gr(I\circ\hat h)$ are diagrams of width $2x-y+2n$ and
with signatures $\u{\sigma}$ and $\u{\hat\sigma}$ respectively.

{\it Case 1:} $2y-x-n\ge x$. The diagram $D$ can be reduced as follows:

\medskip

\begin{center}
\scalebox{0.8}{
\begin{tikzpicture}[baseline=-33pt]
\fill[gray!30] (-0.7,-1.5)--(6.2,-1.5)--(6.2,-1.7)--(-0.7,-1.7)--(-0.7,-1.5);

\draw[line width=\st] (-0.7,-1.5)--(6.2,-1.5);
\draw[line width=\st] (-0.7,-1.7)--(6.2,-1.7);
\draw[line width=\tt,red] (-0.6,-1.6)--(2,1)--(3.5,-0.5)--(5,1)--(5,1)--(5.1,0.9);

\draw[line width=\tt,red,dashed] (0,-1)--(1.5,-2.5)--(3.5,-0.5)--(2,1)--(0,-1);

\draw[line width=\st] (-0.7,1)node[anchor=east]{$y$}--(6.2,1);
\draw[line width=\st] (-0.7,-1)node[anchor=east]{$x$}--(6.2,-1);
\draw[line width=\st] (-0.7,-0.5)node[anchor=east]{$2y-x-n$}--(6.2,-0.5);
\draw[line width=\st] (-0.7,-2.5)node[anchor=east]{$y-n$}--(6.2,-2.5);
\end{tikzpicture}}
$\to\cdots\to$
\scalebox{0.8}{
\begin{tikzpicture}[baseline=-33pt]
\fill[gray!30] (-0.7,-1.5)--(6.2,-1.5)--(6.2,-1.7)--(-0.7,-1.7)--(-0.7,-1.5);

\draw[line width=\st] (-0.7,-1.5)--(6.2,-1.5);
\draw[line width=\st] (-0.7,-1.7)--(6.2,-1.7);
\draw[line width=\tt,red] (-0.6,-1.6)--(0.5,-0.5)--(0.7,-0.7)--(0.9,-0.5)--(1.1,-0.7)--(1.3,-0.5)--(1.5,-0.7)--(1.6,-0.6);

\draw[fill,red] (1.75,-0.6) circle(0.02);
\draw[fill,red] (1.9,-0.6) circle(0.02);
\draw[fill,red] (2.05,-0.6) circle(0.02);

\draw[line width=\tt,red] (2.2,-0.6)--(2.3,-0.5)--(2.5,-0.7)--(2.7,-0.5)--(2.9,-0.7)--(3.1,-0.5)--(3.3,-0.7)--(5,1)--(5.1,0.9);

\draw[line width=\tt,red,dashed] (0,-1)--(1.5,-2.5)--(3.5,-0.5)--(2,1)--(0,-1);

\draw[line width=\st] (-0.7,1)--(6.2,1);
\draw[line width=\st] (-0.7,-1)--(6.2,-1);
\draw[line width=\st] (-0.7,-0.5)--(6.2,-0.5);
\draw[line width=\st] (-0.7,-2.5)--(6.2,-2.5);
\end{tikzpicture}}
$\to\cdots$
\end{center}

\smallskip

\begin{center}
$\to$
\scalebox{0.8}{
\begin{tikzpicture}[baseline=-33pt]
\fill[gray!30] (-0.7,-1.5)--(6.2,-1.5)--(6.2,-1.7)--(-0.7,-1.7)--(-0.7,-1.5);

\draw[line width=\st] (-0.7,-1.5)--(6.2,-1.5);
\draw[line width=\st] (-0.7,-1.7)--(6.2,-1.7);
\draw[line width=\tt,red] (-0.6,-1.6)--(0,-1)--(0.2,-1.2)--(0.4,-1)--(0.6,-1.2)--(0.8,-1)--(1,-1.2)--(1.1,-1.1);

\draw[fill,red] (1.25,-1.1) circle(0.02);
\draw[fill,red] (1.4,-1.1) circle(0.02);
\draw[fill,red] (1.55,-1.1) circle(0.02);

\draw[line width=\tt,red] (1.7,-1.1)--(1.8,-1)--(2,-1.2)--(2.2,-1)--(2.4,-1.2)--(2.6,-1)--(2.8,-1.2)--(5,1)--(5.1,0.9);

\draw[line width=\tt,red,dashed] (0,-1)--(1.5,-2.5)--(3.5,-0.5)--(2,1)--(0,-1);

\draw[line width=\st] (-0.7,1)node[anchor=east]{$y$}--(6.2,1);
\draw[line width=\st] (-0.7,-1)node[anchor=east]{$x$}--(6.2,-1);
\draw[line width=\st] (-0.7,-0.5)node[anchor=east]{$2y-x-n$}--(6.2,-0.5);
\draw[line width=\st] (-0.7,-2.5)node[anchor=east]{$y-n$}--(6.2,-2.5);
\end{tikzpicture}}
$\to\cdots\to$
\scalebox{0.8}{
\begin{tikzpicture}[baseline=-33pt]
\fill[gray!30] (-0.7,-1.5)--(6.2,-1.5)--(6.2,-1.7)--(-0.7,-1.7)--(-0.7,-1.5);

\draw[line width=\st] (-0.7,-1.5)--(6.2,-1.5);
\draw[line width=\st] (-0.7,-1.7)--(6.2,-1.7);
\draw[line width=\tt,red] (-0.6,-1.6)--(0,-1)--(0.7,-1.7)--(0.9,-1.5)--(1.1,-1.7)--(1.3,-1.5)--(1.4,-1.6); 

\draw[fill,red] (1.5,-1.6) circle(0.02);
\draw[fill,red] (1.6,-1.6) circle(0.02);
\draw[fill,red] (1.7,-1.6) circle(0.02);

\draw[line width=\tt,red] (1.8,-1.6)--(1.9,-1.7)--(2.1,-1.5)--(2.3,-1.7)--(5,1)--(5.1,0.9);

\draw[line width=\tt,red,dashed] (0,-1)--(1.5,-2.5)--(3.5,-0.5)--(2,1)--(0,-1);

\draw[line width=\st] (-0.7,1)--(6.2,1);
\draw[line width=\st] (-0.7,-1)--(6.2,-1);
\draw[line width=\st] (-0.7,-0.5)--(6.2,-0.5);
\draw[line width=\st] (-0.7,-2.5)--(6.2,-2.5);
\end{tikzpicture}}
\end{center}

\medskip
\noindent
This process takes $y-1$ steps and is described by the signatures $\u{\sigma}^{(y)},\ldots,\u{\sigma}^{(1)}$
given by
$$
\u{\sigma}^{(i)}=
\left\{
\begin{array}{ll}
\u{\sigma}&\text{if }i=y;\\[6pt]
(\ldots\underbrace{0,\ldots,0}_{i,\ldots,2y-i}\ldots\underbrace{0}_{x+n}\ldots0)&\text{if }2y-x-n<i<y;\\[8pt]
(\ldots\!\!\!\!\!\!\!\!\!\!\!\underbrace{0,\ldots,0}_{\qquad\quad i,\ldots,2x-2y+2n-1+i}\!\!\!\!\!\!\!\!\!\!\!\!\!\!\!\!\!\;\;\;\;\;\;\;\;\ldots\;\;\;\;\;\;\;\;\;0)&\text{if }x\le i\le 2y-x-n;\\[8pt]
(\ldots\quad\underbrace{0}_x\;\;\;\;\ldots\!\!\!\!\underbrace{0,\ldots,0}_{2x+1-i,\ldots,2x-2y+2n-1+i}\!\!\!\!\!\!\!\!\!\!\!\!\;\;\;\;\;\;\;\ldots\;\;\;\;\;0)&\text{if }1\le i<x.
\end{array}
\right.
$$

The diagram $\widehat D$ can be reduced to the same diagram as follows:

\medskip

\begin{center}
\scalebox{0.8}{
\begin{tikzpicture}[baseline=-33pt]
\fill[gray!30] (-0.7,-1.5)--(6.2,-1.5)--(6.2,-1.7)--(-0.7,-1.7)--(-0.7,-1.5);

\draw[line width=\st] (-0.7,-1.5)--(6.2,-1.5);
\draw[line width=\st] (-0.7,-1.7)--(6.2,-1.7);
\draw[line width=\tt,red] (-0.6,-1.6)--(0,-1)--(1.5,-2.5)--(3.5,-0.5)--(5,1)--(5.1,0.9);

\draw[line width=\tt,red,dashed] (0,-1)--(1.5,-2.5)--(3.5,-0.5)--(2,1)--(0,-1);

\draw[line width=\st] (-0.7,1)node[anchor=east]{$y$}--(6.2,1);
\draw[line width=\st] (-0.7,-1)node[anchor=east]{$x$}--(6.2,-1);
\draw[line width=\st] (-0.7,-0.5)node[anchor=east]{$2y-x-n$}--(6.2,-0.5);
\draw[line width=\st] (-0.7,-2.5)node[anchor=east]{$y-n$}--(6.2,-2.5);
\end{tikzpicture}}
$\to\cdots\to$
\scalebox{0.8}{
\begin{tikzpicture}[baseline=-33pt]
\fill[gray!30] (-0.7,-1.5)--(6.2,-1.5)--(6.2,-1.7)--(-0.7,-1.7)--(-0.7,-1.5);

\draw[line width=\st] (-0.7,-1.5)--(6.2,-1.5);
\draw[line width=\st] (-0.7,-1.7)--(6.2,-1.7);
\draw[line width=\tt,red] (-0.6,-1.6)--(0,-1)--(0.7,-1.7)--(0.9,-1.5)--(1.1,-1.7)--(1.3,-1.5)--(1.4,-1.6); 

\draw[fill,red] (1.5,-1.6) circle(0.02);
\draw[fill,red] (1.6,-1.6) circle(0.02);
\draw[fill,red] (1.7,-1.6) circle(0.02);

\draw[line width=\tt,red] (1.8,-1.6)--(1.9,-1.7)--(2.1,-1.5)--(2.3,-1.7)--(5,1)--(5.1,0.9);

\draw[line width=\tt,red,dashed] (0,-1)--(1.5,-2.5)--(3.5,-0.5)--(2,1)--(0,-1);

\draw[line width=\st] (-0.7,1)--(6.2,1);
\draw[line width=\st] (-0.7,-1)--(6.2,-1);
\draw[line width=\st] (-0.7,-0.5)--(6.2,-0.5);
\draw[line width=\st] (-0.7,-2.5)--(6.2,-2.5);
\end{tikzpicture}}
\end{center}

\medskip

\noindent
This process takes $n-y$ steps and is described by the signatures $\u{\sigma}^{(y-n)},\ldots,\u{\sigma}^{(0)}$
given by

$$
\u{\sigma}^{(i)}=
\left\{
\begin{array}{ll}
\u{\hat\sigma}&\text{if }i=y-n;\\[6pt]
(\ldots\underbrace{0}_{x}\;\;\;\ldots\!\!\!\underbrace{0,\ldots,0}_{2x-i,\ldots,2x-2y+2n+i}\!\!\!\!\!\!\!\!\!\;\;\;\;\;\ldots\;\;\;\;0)&\text{if }y-n<i\le0.
\end{array}
\right.
$$

{\it Case 2:} $x>2y-x-n>0$. The diagram $D$
can be reduced as follows:

\medskip

\begin{center}
\scalebox{0.8}{
\begin{tikzpicture}[baseline=-33pt]
\fill[gray!30] (-0.7,-1.5)--(6.2,-1.5)--(6.2,-1.7)--(-0.7,-1.7)--(-0.7,-1.5);

\draw[line width=\st] (-0.7,-1.5)--(6.2,-1.5);
\draw[line width=\st] (-0.7,-1.7)--(6.2,-1.7);
\draw[line width=\tt,red] (-0.6,-1.6)--(2,1)--(4,-1)--(6,1)--(6.1,0.9);

\draw[line width=\tt,red,dashed] (0.5,-0.5)--(2.5,-2.5)--(4,-1);

\draw[line width=\st] (-0.7,1)node[anchor=east]{$y$}--(6.2,1);
\draw[line width=\st] (-0.7,-1)node[anchor=east]{$2y-x-n$}--(6.2,-1);
\draw[line width=\st] (-0.7,-0.5)node[anchor=east]{$x$}--(6.2,-0.5);
\draw[line width=\st] (-0.7,-2.5)node[anchor=east]{$y-n$}--(6.2,-2.5);
\end{tikzpicture}}
$\to\cdots\to$
\scalebox{0.8}{
\begin{tikzpicture}[baseline=-33pt]
\fill[gray!30] (-0.7,-1.5)--(6.2,-1.5)--(6.2,-1.7)--(-0.7,-1.7)--(-0.7,-1.5);

\draw[line width=\st] (-0.7,-1.5)--(6.2,-1.5);
\draw[line width=\st] (-0.7,-1.7)--(6.2,-1.7);
\draw[line width=\tt,red] (-0.6,-1.6)--(0.5,-0.5)--(0.7,-0.7)--(0.9,-0.5)--(1.1,-0.7)--(1.3,-0.5)--(1.5,-0.7)--(1.7,-0.5)--(1.8,-0.6);
\draw[line width=\tt,red,dashed] (0.5,-0.5)--(2,1)--(3.5,-0.5);
\draw[fill,red] (1.95,-0.6) circle(0.02);
\draw[fill,red] (2.1,-0.6) circle(0.02);
\draw[fill,red] (2.25,-0.6) circle(0.02);
\draw[line width=\tt,red] (2.4,-0.6)--(2.5,-0.7)--(2.7,-0.5)--(2.9,-0.7)--(3.1,-0.5)--(3.3,-0.7)--(3.5,-0.5)--(4,-1)--(6,1)--(6.1,0.9);

\draw[line width=\tt,red,dashed] (0.5,-0.5)--(2.5,-2.5)--(4,-1);

\draw[line width=\st] (-0.7,1)--(6.2,1);
\draw[line width=\st] (-0.7,-1)--(6.2,-1);
\draw[line width=\st] (-0.7,-0.5)--(6.2,-0.5);
\draw[line width=\st] (-0.7,-2.5)--(6.2,-2.5);
\end{tikzpicture}}
$\to\cdots$
\end{center}

\smallskip

\begin{center}
$\to$
\scalebox{0.8}{
\begin{tikzpicture}[baseline=-33pt]
\fill[gray!30] (-0.7,-1.5)--(6.2,-1.5)--(6.2,-1.7)--(-0.7,-1.7)--(-0.7,-1.5);

\draw[line width=\st] (-0.7,-1.5)--(6.2,-1.5);
\draw[line width=\st] (-0.7,-1.7)--(6.2,-1.7);
\draw[line width=\tt,red] (-0.6,-1.6)--(0.5,-0.5)--(1,-1)--(1.2,-1.2)--(1.4,-1)--(1.6,-1.2)--(1.8,-1)--(2,-1.2)--(2.1,-1.1);
\draw[line width=\tt,red,dashed] (0.5,-0.5)--(2,1)--(4,-1);
\draw[fill,red] (2.25,-1.1) circle(0.02);
\draw[fill,red] (2.40,-1.1) circle(0.02);
\draw[fill,red] (2.55,-1.1) circle(0.02);
\draw[line width=\tt,red] (2.7,-1.1)--(2.8,-1)--(3,-1.2)--(3.2,-1)--(3.4,-1.2)--(3.6,-1)--(3.8,-1.2)--(4,-1)--(6,1)--(6.1,0.9);

\draw[line width=\tt,red,dashed] (0.5,-0.5)--(2.5,-2.5)--(4,-1);

\draw[line width=\st] (-0.7,1)node[anchor=east]{$y$}--(6.2,1);
\draw[line width=\st] (-0.7,-1)node[anchor=east]{$2y-x-n$}--(6.2,-1);
\draw[line width=\st] (-0.7,-0.5)node[anchor=east]{$x$}--(6.2,-0.5);
\draw[line width=\st] (-0.7,-2.5)node[anchor=east]{$y-n$}--(6.2,-2.5);
\end{tikzpicture}}
$\to\cdots\to$
\scalebox{0.8}{
\begin{tikzpicture}[baseline=-33pt]
\fill[gray!30] (-0.7,-1.5)--(6.2,-1.5)--(6.2,-1.7)--(-0.7,-1.7)--(-0.7,-1.5);

\draw[line width=\st] (-0.7,-1.5)--(6.2,-1.5);
\draw[line width=\st] (-0.7,-1.7)--(6.2,-1.7);
\draw[line width=\tt,red] (-0.6,-1.6)--(0.5,-0.5)--(1,-1)--(1.7,-1.7)--(1.9,-1.5)--(2.1,-1.7)--(2.3,-1.5)--(2.4,-1.6);
\draw[line width=\tt,red,dashed] (0.5,-0.5)--(2,1)--(4,-1);
\draw[fill,red] (2.5,-1.6) circle(0.02);
\draw[fill,red] (2.6,-1.6) circle(0.02);
\draw[fill,red] (2.7,-1.6) circle(0.02);
\draw[line width=\tt,red] (2.8,-1.6)--(2.9,-1.7)--(3.1,-1.5)--(3.3,-1.7)--(4,-1)--(6,1)--(6.1,0.9);

\draw[line width=\tt,red,dashed] (0.5,-0.5)--(2.5,-2.5)--(4,-1);

\draw[line width=\st] (-0.7,1)--(6.2,1);
\draw[line width=\st] (-0.7,-1)--(6.2,-1);
\draw[line width=\st] (-0.7,-0.5)--(6.2,-0.5);
\draw[line width=\st] (-0.7,-2.5)--(6.2,-2.5);
\end{tikzpicture}}
\end{center}

\medskip
\noindent
This process takes $y-1$ steps and is described by the signatures $\u{\sigma}^{(y)},\ldots,\u{\sigma}^{(1)}$
given by
$$
\u{\sigma}^{(i)}=
\left\{
\begin{array}{ll}
\u{\sigma}&\text{if }i=y;\\[6pt]
(\ldots\underbrace{0,\ldots,0}_{i,\ldots,2y-i}\ldots\underbrace{0}_{x+n}\ldots0)&\text{if }x\le i<y;\\[8pt]
(\ldots\underbrace{0}_{x}\;\;\ldots\!\!\underbrace{0,\ldots,0}_{2x+1-i,\ldots,2y-i}\!\!\!\!\;\;\ldots\;\;\underbrace{0}_{x+n}\ldots0)&\text{if }2y-x-n<i<x;\\[8pt]
(\ldots\underbrace{0}_{x}\;\;\;\;\;\ldots\!\!\!\!\!\!\underbrace{0,\ldots,0}_{2x+1-i,\ldots,2x-2y+2n-1+i}\!\!\!\!\!\!\!\!\!\!\!\!\!\!\!\;\;\;\;\;\;\;\ldots\;\;\;\;\;\;\;\;0)&\text{if }1\le i\le2y-x-n;\\[8pt]
\end{array}
\right.
$$

The diagram $\widehat D$ can be reduced to the same diagram as follows:

\medskip

\def\tt{1.3pt}
\def\st{0.9pt}

\begin{center}
\scalebox{0.8}{
\begin{tikzpicture}[baseline=-33pt]
\fill[gray!30] (-0.7,-1.5)--(6.2,-1.5)--(6.2,-1.7)--(-0.7,-1.7)--(-0.7,-1.5);

\draw[line width=\st] (-0.7,-1.5)--(6.2,-1.5);
\draw[line width=\st] (-0.7,-1.7)--(6.2,-1.7);

\draw[line width=\tt,red] (-0.6,-1.6)--(0.5,-0.5)--(2.5,-2.5)--(4,-1)--(6,1)--(6.1,0.9);

\draw[line width=\tt,red,dashed] (0.5,-0.5)--(2,1)--(4,-1);

\draw[line width=\st] (-0.7,1)node[anchor=east]{$y$}--(6.2,1);
\draw[line width=\st] (-0.7,-1)node[anchor=east]{$2y-x-n$}--(6.2,-1);
\draw[line width=\st] (-0.7,-0.5)node[anchor=east]{$x$}--(6.2,-0.5);
\draw[line width=\st] (-0.7,-2.5)node[anchor=east]{$y-n$}--(6.2,-2.5);
\end{tikzpicture}}
$\to\cdots\to$
\scalebox{0.8}{
\begin{tikzpicture}[baseline=-33pt]
\fill[gray!30] (-0.7,-1.5)--(6.2,-1.5)--(6.2,-1.7)--(-0.7,-1.7)--(-0.7,-1.5);

\draw[line width=\st] (-0.7,-1.5)--(6.2,-1.5);
\draw[line width=\st] (-0.7,-1.7)--(6.2,-1.7);
\draw[line width=\tt,red] (-0.6,-1.6)--(0.5,-0.5)--(1,-1)--(1.7,-1.7)--(1.9,-1.5)--(2.1,-1.7)--(2.3,-1.5)--(2.4,-1.6);
\draw[line width=\tt,red,dashed] (0.5,-0.5)--(2,1)--(4,-1);
\draw[fill,red] (2.5,-1.6) circle(0.02);
\draw[fill,red] (2.6,-1.6) circle(0.02);
\draw[fill,red] (2.7,-1.6) circle(0.02);
\draw[line width=\tt,red] (2.8,-1.6)--(2.9,-1.7)--(3.1,-1.5)--(3.3,-1.7)--(4,-1)--(6,1)--(6.1,0.9);

\draw[line width=\tt,red,dashed] (0.5,-0.5)--(2.5,-2.5)--(4,-1);

\draw[line width=\st] (-0.7,1)--(6.2,1);
\draw[line width=\st] (-0.7,-1)--(6.2,-1);
\draw[line width=\st] (-0.7,-0.5)--(6.2,-0.5);
\draw[line width=\st] (-0.7,-2.5)--(6.2,-2.5);
\end{tikzpicture}}
\end{center}

\medskip

\noindent
This process takes $n-y$ steps and is described by the signatures $\u{\sigma}^{(y-n)},\u{\sigma}^{(y-n+1)}\ldots,\u{\sigma}^{(0)}$,
which are given by the same formulas as in case 1.

{\it Case 3:} $2y-x-n\le 0$. The diagram $D$ can be reduced as follows:

\medskip

\begin{center}
\scalebox{0.8}{
\begin{tikzpicture}[baseline=-43pt]
\fill[gray!30] (-0.7,-1.3)--(5.9,-1.3)--(5.9,-1.5)--(-0.7,-1.5)--(-0.7,-1.3);

\draw[line width=\st] (-0.7,-1.3)--(5.9,-1.3);
\draw[line width=\st] (-0.7,-1.5)--(5.9,-1.5);

\draw[line width=\tt,red] (-0.4,-1.4)--(0.5,-0.5)--(1.2,0.2)--(3.4,-2)--(5.6,0.2)--(5.7,0.1);

\draw[line width=\tt,red,dashed](1.2,0.2)--(0.5,-0.5)--(2.7,-2.7)--(3.4,-2)--(1.2,0.2);

\draw[line width=\st] (-0.7,0.2)node[anchor=east]{$y$}--(5.9,0.2);
\draw[line width=\st] (-0.7,-2)node[anchor=east]{$2y-x-n$}--(5.9,-2);
\draw[line width=\st] (-0.7,-0.5)node[anchor=east]{$x$}--(5.9,-0.5);
\draw[line width=\st] (-0.7,-2.7)node[anchor=east]{$y-n$}--(5.9,-2.7);
\end{tikzpicture}}
$\to\cdots\to$
\scalebox{0.8}{
\begin{tikzpicture}[baseline=-43pt]
\fill[gray!30] (-0.7,-1.3)--(5.9,-1.3)--(5.9,-1.5)--(-0.7,-1.5)--(-0.7,-1.3);

\draw[line width=\st] (-0.7,-1.3)--(5.9,-1.3);
\draw[line width=\st] (-0.7,-1.5)--(5.9,-1.5);

\draw[line width=\tt,red](-0.4,-1.4)--(0.5,-0.5)--(0.7,-0.7)--(0.9,-0.5)--(1,-0.6);

\draw[fill,red] (1.15,-0.6) circle(0.02);
\draw[fill,red] (1.3,-0.6) circle(0.02);
\draw[fill,red] (1.45,-0.6) circle(0.02);

\draw[line width=\tt,red] (1.6,-0.6)--(1.7,-0.7)--(1.9,-0.5)--(3.4,-2)--(5.6,0.2)--(5.7,0.1);

\draw[line width=\tt,red,dashed](1.2,0.2)--(0.5,-0.5)--(2.7,-2.7)--(3.4,-2)--(1.2,0.2);

\draw[line width=\st] (-0.7,0.2)--(5.9,0.2);
\draw[line width=\st] (-0.7,-2)--(5.9,-2);
\draw[line width=\st] (-0.7,-0.5)--(5.9,-0.5);
\draw[line width=\st] (-0.7,-2.7)--(5.9,-2.7);
\end{tikzpicture}}
$\to\cdots$
\end{center}

\smallskip

\begin{center}
$\cdots\to$
\scalebox{0.8}{
\begin{tikzpicture}[baseline=-43pt]
\fill[gray!30] (-0.7,-1.3)--(5.9,-1.3)--(5.9,-1.5)--(-0.7,-1.5)--(-0.7,-1.3);

\draw[line width=\st] (-0.7,-1.3)--(5.9,-1.3);
\draw[line width=\st] (-0.7,-1.5)--(5.9,-1.5);

\draw[line width=\tt,red](-0.4,-1.4)--(0.5,-0.5)--(1.5,-1.5)--(1.7,-1.3)--(1.8,-1.4);

\draw[fill,red] (1.95,-1.4) circle(0.02);
\draw[fill,red] (2.1,-1.4) circle(0.02);
\draw[fill,red] (2.25,-1.4) circle(0.02);

\draw[line width=\tt,red] (2.4,-1.4)--(2.5,-1.5)--(2.7,-1.3)--(3.4,-2)--(5.6,0.2)--(5.7,0.1);

\draw[line width=\tt,red,dashed](1.2,0.2)--(0.5,-0.5)--(2.7,-2.7)--(3.4,-2)--(1.2,0.2);

\draw[line width=\st] (-0.7,0.2)node[anchor=east]{$y$}--(5.9,0.2);
\draw[line width=\st] (-0.7,-2)node[anchor=east]{$2y-x-n$}--(5.9,-2);
\draw[line width=\st] (-0.7,-0.5)node[anchor=east]{$x$}--(5.9,-0.5);
\draw[line width=\st] (-0.7,-2.7)node[anchor=east]{$y-n$}--(5.9,-2.7);
\end{tikzpicture}}

\end{center}

\noindent
This process takes $y-1$ steps and is described by the signatures $\u{\sigma}^{(y)},\ldots,\u{\sigma}^{(1)}$
given by
$$
\u{\sigma}^{(i)}=
\left\{
\begin{array}{ll}
\u{\sigma}&\text{if }i=y;\\[6pt]
(\ldots\underbrace{0,\ldots,0}_{i,\ldots,2y-i}\ldots\underbrace{0}_{x+n}\ldots0)&\text{if }x\le i<y;\\[8pt]
(\ldots\underbrace{0}_{x}\;\;\ldots\!\!\underbrace{0,\ldots,0}_{2x+1-i,\ldots,2y-i}\!\!\!\!\;\;\ldots\;\;\underbrace{0}_{x+n}\ldots0)&\text{if }1\le i<x;\\[8pt]
\end{array}
\right.
$$

The diagram $\widehat D$ can be reduced to the same diagram as follows:
\begin{center}
\scalebox{0.8}{
\begin{tikzpicture}[baseline=-43pt]
\fill[gray!30] (-0.7,-1.3)--(5.9,-1.3)--(5.9,-1.5)--(-0.7,-1.5)--(-0.7,-1.3);

\draw[line width=\st] (-0.7,-1.3)--(5.9,-1.3);
\draw[line width=\st] (-0.7,-1.5)--(5.9,-1.5);

\draw[line width=\tt,red] (-0.4,-1.4)--(0.5,-0.5)--(2.7,-2.7)--(3.4,-2)--(5.6,0.2)--(5.7,0.1);

\draw[line width=\tt,red,dashed](1.2,0.2)--(0.5,-0.5)--(2.7,-2.7)--(3.4,-2)--(1.2,0.2);

\draw[line width=\st] (-0.7,0.2)node[anchor=east]{$y$}--(5.9,0.2);
\draw[line width=\st] (-0.7,-2)node[anchor=east]{$2y-x-n$}--(5.9,-2);
\draw[line width=\st] (-0.7,-0.5)node[anchor=east]{$x$}--(5.9,-0.5);
\draw[line width=\st] (-0.7,-2.7)node[anchor=east]{$y-n$}--(5.9,-2.7);
\end{tikzpicture}}
$\to\cdots\to$
\scalebox{0.8}{
\begin{tikzpicture}[baseline=-43pt]
\fill[gray!30] (-0.7,-1.3)--(5.9,-1.3)--(5.9,-1.5)--(-0.7,-1.5)--(-0.7,-1.3);

\draw[line width=\st] (-0.7,-1.3)--(5.9,-1.3);
\draw[line width=\st] (-0.7,-1.5)--(5.9,-1.5);

\draw[line width=\tt,red] (-0.4,-1.4)--(0.5,-0.5)--(2,-2)--(2.2,-1.8)--(2.3,-1.9);

\draw[fill,red] (2.45,-1.9) circle(0.02);
\draw[fill,red] (2.6,-1.9) circle(0.02);
\draw[fill,red] (2.75,-1.9) circle(0.02);

\draw[line width=\tt,red] (2.9,-1.9)--(3,-2)--(3.2,-1.8)--(3.4,-2)--(5.6,0.2)--(5.7,0.1);

\draw[line width=\tt,red,dashed](1.2,0.2)--(0.5,-0.5)--(2.7,-2.7)--(3.4,-2)--(1.2,0.2);

\draw[line width=\st] (-0.7,0.2)--(5.9,0.2);
\draw[line width=\st] (-0.7,-2)--(5.9,-2);
\draw[line width=\st] (-0.7,-0.5)--(5.9,-0.5);
\draw[line width=\st] (-0.7,-2.7)--(5.9,-2.7);
\end{tikzpicture}}
$\to\cdots$
\end{center}

\smallskip

\begin{center}
$\cdots\to$
\scalebox{0.8}{
\begin{tikzpicture}[baseline=-43pt]
\fill[gray!30] (-0.7,-1.3)--(5.9,-1.3)--(5.9,-1.5)--(-0.7,-1.5)--(-0.7,-1.3);

\draw[line width=\st] (-0.7,-1.3)--(5.9,-1.3);
\draw[line width=\st] (-0.7,-1.5)--(5.9,-1.5);

\draw[line width=\tt,red](-0.4,-1.4)--(0.5,-0.5)--(1.5,-1.5)--(1.7,-1.3)--(1.8,-1.4);

\draw[fill,red] (1.95,-1.4) circle(0.02);
\draw[fill,red] (2.1,-1.4) circle(0.02);
\draw[fill,red] (2.25,-1.4) circle(0.02);

\draw[line width=\tt,red] (2.4,-1.4)--(2.5,-1.5)--(2.7,-1.3)--(3.4,-2)--(5.6,0.2)--(5.7,0.1);

\draw[line width=\tt,red,dashed](1.2,0.2)--(0.5,-0.5)--(2.7,-2.7)--(3.4,-2)--(1.2,0.2);

\draw[line width=\st] (-0.7,0.2)node[anchor=east]{$y$}--(5.9,0.2);
\draw[line width=\st] (-0.7,-2)node[anchor=east]{$2y-x-n$}--(5.9,-2);
\draw[line width=\st] (-0.7,-0.5)node[anchor=east]{$x$}--(5.9,-0.5);
\draw[line width=\st] (-0.7,-2.7)node[anchor=east]{$y-n$}--(5.9,-2.7);
\end{tikzpicture}}

\end{center}

\medskip

\noindent
This process takes $n-y$ steps and is described by the signatures $\u{\sigma}^{(y-n)},\ldots,\u{\sigma}^{(0)}$
given by
$$
\u{\sigma}^{(i)}=
\left\{
\begin{array}{ll}
\u{\hat\sigma}&\text{if }i=y-n;\\[6pt]
(\ldots\underbrace{0}_{x}\;\;\;\ldots\!\!\!\underbrace{0,\ldots,0}_{2x-i,\ldots,2x-2y+2n+i}\!\!\!\!\!\!\!\!\!\;\;\;\;\ldots\;\;\;0)&\text{if }y-n<i\le 2y-x-n;\\[6pt]
(\ldots\underbrace{0}_{x}\;\;\;\ldots\!\underbrace{0,\ldots,0}_{2x-i,\ldots,2y-1-i}\!\!\!\!\;\;\;\ldots\;\;\;\underbrace{0}_{x+n}\ldots0)&\text{if }2y-x-n<i\le0.
\end{array}
\right.
$$
Note that the sequence $\u{\sigma}^{(2y-x-n+1)},\ldots,\u{\sigma}^{(0)}$ may be empty if $2y-x-n=0$.

Let $w=\u{t}^{\u{\delta}}$.
We get the following cycle in $\Sub(\u{t},w)$:
$$
\begin{tikzcd}[row sep=30pt]
&\u{\delta}\arrow[no head]{rd}\arrow[no head]{ld}&\\[-15pt]
\u{\sigma}^{(y)}\arrow[no head]{d}&&\u{\tilde\sigma}\arrow[no head]{d}\\[-15pt]
\u{\sigma}^{(y-1)}\arrow[dashed,no head]{d}&&\u{\sigma}^{(y-n)}\arrow[dashed,no head]{d}\\
\u{\sigma}^{(2)}\arrow[no head]{rd}&&\u{\sigma}^{(-1)}\arrow[no head]{ld}\\[-20pt]
&\u{\sigma}^{(1)}=\u{\sigma}^{(0)}&
\end{tikzcd}
$$
of length $n+2$ with maximal vertex $\u{\delta}$.
Both statements can be proved, applying~(\ref{eq:cycb6:6}) and Corollary~\ref{corollary:cycl:3},
exacly as for the cycles of the first type with the exception of
a possible equality $\u{\sigma}^{(i)}=\u{\tilde\sigma}$, where $1\le i\le y$.
However this equality can never hold, as the last element of $\u{\sigma}^{(i)}$
equals $0$ while the last element of $\u{\tilde\sigma}$ equals $1$.
We denote this cycle by $\Cyc^2_\c(x,y)$.

\subsection{Reducing a special vertex}\label{Reducing_a_special_vertex} Let $\u{s}$ be a finite sequence in $A$ and $B$ and
$\u{\gamma}\subset\u{s}$. Let $(i,k)$ and $(j,l)$
be special $\mu$- and $\lm$-pairs respectively for $\u{\gamma}$. Thus $\mu,\lm\in\Phi_\D^+$.
We assume that $i<j<k<l$.
Note that $\lm\ne\mu$, as otherwise $\u{\gamma}^{\to j}=-\mu$, which violates condition~\ref{definition:cycb6:3:p:iii} of
Definition~\ref{definition:cycb6:3}.

First consider the case $n<\infty$. Let $f:[0,1]\to S^1_{n/\pi}$ 
be an alcove walk
generating (see Definition~\ref{definition:cycb6:1}) the gallery
$$
\u{\Gamma}=(\mathscr C_1,\H_{\u{\gamma}^{\to1}},\mathscr C_2,\H_{\u{\gamma}^{\to2}},\ldots,\mathscr C_m,\H_{\u{\gamma}^{\to m}},\mathscr C_{m+1}).
$$
We define $O=f(0)$. It belongs to the fundamental arch chamber $\mathscr F$.
Without loss of generality, we may assume that $O$ is its center.
Let $\H_\mu\cap S_{n/\pi}^1=\{M_1,M_2\}$ and $\H_\lm\cap S_{n/\pi}^1=\{\Lm_1,\Lm_2\}$.
So we have five distinct points $O,M_1,M_2,\Lm_1,\Lm_2$.
To eliminate uncertainty, we choose $M_1$ and $\Lm_1$ so that these points are not separated from
the fundamental chamber by the hyperplanes $\H_\lm$ and $\H_\mu$ respectively:

\begin{center}
\scalebox{0.9}{
\begin{tikzpicture}[baseline=-63pt]
\fill[gray!30] (0,0)--(-.4895460546, 2.144841407)--(.4895460546, 2.1448414075);
\draw[line width=\st] (0,0) circle(1.5);
\draw[line width=\st,dashed] (-.4895460546, 2.144841407) -- (0,0);
\draw[line width=\st,dashed] (.4895460546, 2.144841407) -- (0,0);
\draw[line width=\st] (-2.252422170, 1.084709348)node[left=1pt]{$\H_\mu$}--(2.252422170, -1.084709348);
\draw[fill] (-1.351453302, .6508256088) circle(0.06)node[left=6pt,above=7pt]{$M_1$};
\draw[fill] (1.351453302, -.6508256088) circle(0.06)node[right=5pt,below=5pt]{$M_2$};
\draw[fill] (0, 1.5) circle(0.06)node[above=2pt]{$O$};

\draw[fill] (-.9352347012, -1.172747225) circle(0.06)node[right=3pt,below=5pt]{$\Lm_2$};

\draw[fill] (.9352347012, 1.172747225) circle(0.06)node[right=7pt]{$\Lm_1$};
\draw[line width=\st] (-1.558724502, -1.954578708)--(1.558724502, 1.954578708)node[right=1pt]{$\H_\lm$};


\end{tikzpicture}}
\end{center}

\vspace{-8pt}

\noindent
Let $f^{-1}(P_n)=\{t_1<\cdots<t_N\}$, where $P_n=\Sigma_1(\D,\{A,B\})$ as in Section~\ref{Galleries_as_paths}.
As we have $f(t_i)\in\H_\mu$, we get $f(t_i)=M_1$ or $f(t_i)=M_2$.
Suppose that the latter equality holds. Then $f|_{[0,t_i]}$ intersects $\H_\lm$,
so there exists an index $i'<i$ such that $f(t_{i'})\in\H_\lm$.
Assuming that $i'$ is the maximal index with this property, we get $\H_\lm$ separates $\mathscr C_1$ from $f((t_{i'},t_i))$.
Thus $\H_\lm$, which is the $j$th wall of $\u{\Gamma}$, separates $\mathscr C_1$ from $\mathscr C_{i'+1}$.
It contradicts Lemma~\ref{lemma:cycl:2} applied to the special $\lm$-pair $(j,k)$.
Thus, we have proved that $f(t_i)=M_1$.

By Lemma~\ref{lemma:cycl:2}, we see that $\H_\mu$ separates $\mathscr C_1$ and
$$
f((t_i,t_{i+1})\cup(t_{i+1},t_{i+2})\cup\cdots\cup(t_{k-1},t_k))\subset \mathscr C_{i+1}\cup \mathscr C_{i+2}\cup\cdots\cup \mathscr C_k.
$$
As $t_j\in(t_i,t_k)$ and $f$ is continuous, we have only one possibility $f(t_j)=\Lambda_2$.
Similarly, by Lemma~\ref{lemma:cycl:2}, we see that $\H_\lm$ separates $\mathscr C_1$ and
$$
f((t_j,t_{j+1})\cup(t_{j+1},t_{j+2})\cup\cdots\cup(t_{l-1},t_l))\subset \mathscr C_{j+1}\cup \mathscr C_{j+2}\cup\cdots\cup \mathscr C_l.
$$
As $t_k\in(t_j,t_l)$ and $f$ is continuous, we get $f(t_k)=M_2$.
However, both cases $f(t_l)=\Lambda_1$ and $f(t_l)=\Lambda_2$ are possible.
We define $u=1$ or $u=2$, depending on which case holds. We denote by $O'$ the point at the centre
of the arc chamber attached to $\Lm_u$ that is not separated from $O$ by $\H_\lm$.

Next we count our parameters and points as follows:
$$
y_0=0,\quad y_1=t_i,\quad y_2=t_j,\quad y_3=t_k,\quad y_4=t_l,
$$
$$
Y_0=O,\quad Y_1=M_1,\quad Y_2=\Lm_2,\quad Y_3=M_2, \quad Y_4=\Lm_u,\quad Y_5=O',
$$
$$
Y'_1=\Lm_2,\quad Y'_2=M_2,\quad Y'_3=\Lm_1,\quad Y'_4=O.
$$
Let us introduce the numbers
\begin{equation}\label{eq:x}
x_0=1/2,
\quad\; x_i=x_{i-1}+|\arc{Y_{i-1}Y_i}|\;\text{ for }\;i=1,\ldots,5.
\end{equation}
and the restrictions $f_m=f|_{[y_{m-1},y_m]}$ for $m=1,\ldots,4$. We consider the paths to minor arcs
$$
g_m:[x_{m-1},x_m]\ito\arc{Y_{m-1}Y_m}
$$
having natural parametrizations such that $g_m(x_{m-1})=Y_{m-1}$ and $g_m(x_m)=Y_m$ for $m=1,\ldots,5$.
Applying Corollary~\ref{corollary:cycl:2} to $f_m$ and $g_m$ with $X=Y_{m-1}$, $Y=Y_m$, $Z=Y'_m$,
we obtain increasing maps $q_m:g_m^{-1}(P_n)\to f_m^{-1}(P_n)$
satisfying properties~\ref{corollary:cycl:2:p:i}--\ref{corollary:cycl:2:p:iv}
for $m=1,\ldots,4$. Note that the points $Y'_1,\ldots,Y'_4$ were chosen
to guarantee condition~\ref{corollary:cycl:2:p:4} of this corollary,
which follows from Lemma~\ref{lemma:cycl:2}.
We glue our maps as follows:
$$
g=g_1\cup g_2\cup g_3\cup g_4\cup g_5,\quad q=q_1\cup q_2\cup q_3\cup q_4.
$$
Below are the pictures representing $g$ for $u=1$ and $u=2$:

\smallskip

\begin{center}
\scalebox{0.9}{
\begin{tikzpicture}[baseline=-63pt]
\fill[gray!30] (0,0)--(-.4895460546, 2.144841407)--(.4895460546, 2.1448414075);
\draw[line width=\st] (0,0) circle(1.5);
\draw[line width=\st,dashed] (-.4895460546, 2.144841407) -- (0,0);
\draw[line width=\st,dashed] (.4895460546, 2.144841407) -- (0,0);
\draw[line width=\st] (-2.252422170, 1.084709348)node[left=1pt]{$\H_\mu$}--(2.252422170, -1.084709348);
\draw[fill] (-1.351453302, .6508256088) circle(0.06)node[left=6pt,above=7pt]{$M_1$};
\draw[fill] (1.351453302, -.6508256088) circle(0.06)node[right=5pt,below=5pt]{$M_2$};
\draw[fill] (0, 1.5) circle(0.06)node[above=2pt]{$O$};

\draw[fill] (-.9352347012, -1.172747225) circle(0.06)node[right=3pt,below=5pt]{$\Lm_2$};

\draw[fill] (.9352347012, 1.172747225) circle(0.06)node[right=7pt]{$\Lm_1$};
\draw[line width=\st] (-1.558724502, -1.954578708)--(1.558724502, 1.954578708)node[right=1pt]{$\H_\lm$};

\draw[fill] (.6508256091, 1.351453302)node[right=4pt, above=4pt]{$O'$} circle(0.06);

\draw[-{Stealth[length=\sl]},line width=\tt,red] (0,1.6) arc (90:424.2857143:1.6);

\end{tikzpicture}}
\qquad\qquad
\scalebox{0.9}{
\begin{tikzpicture}[baseline=-63pt]
\fill[gray!30] (0,0)--(-.4895460546, 2.144841407)--(.4895460546, 2.1448414075);
\draw[line width=\st] (0,0) circle(1.5);
\draw[line width=\st,dashed] (-.4895460546, 2.144841407) -- (0,0);
\draw[line width=\st,dashed] (.4895460546, 2.144841407) -- (0,0);
\draw[line width=\st] (-2.252422170, 1.084709348)node[left=1pt]{$\H_\mu$}--(2.252422170, -1.084709348);
\draw[fill] (-1.351453302, .6508256088) circle(0.06)node[left=6pt,above=7pt]{$M_1$};
\draw[fill] (1.351453302, -.6508256088) circle(0.06)node[right=5pt,below=5pt]{$M_2$};
\draw[fill] (0, 1.5) circle(0.06)node[above=2pt]{$O$};

\draw[fill] (-.9352347012, -1.172747225) circle(0.06)node[right=3pt,below=5pt]{$\Lm_2$};

\draw[fill] (.9352347012, 1.172747225) circle(0.06)node[right=7pt]{$\Lm_1$};
\draw[line width=\st] (-1.558724502, -1.954578708)--(1.558724502, 1.954578708)node[right=1pt]{$\H_\lm$};

\draw[fill] (-1.172747224, -.9352347024) circle(0.06)node[left=4pt]{$O'$};

\draw[-{Stealth[length=\sl]},line width=\tt,red] (0,1.6) arc (90:334.2857143:1.6);
\draw[-{Stealth[length=\sl]},line width=\tt,red] (1.252346727, -.6030983969) arc (334.2857143:218.5714286:1.39);
\end{tikzpicture}}
\end{center}

\vspace{-10pt}

\noindent
Thus $g$ is a path having natural parametrization from $[x_0,x_5]$ to $S_{n/\pi}^1$, which is an alcove walk
such that $g(x_m)=f(y_m)$ for any $m=0,\ldots,4$
and $q$ is an increasing map from
$g^{-1}(P_n)=\{1,2,\ldots,x_4\}$ to $f^{-1}(P_n)$ such that
$q(x_m)=y_m$ for any $m=1,\ldots,4$. We define the function $p:\{1,2,\ldots,x_4\}\to\{1,\ldots,N\}$ by
\begin{equation}\label{eq:p}
q(z)=t_{p(z)}.
\end{equation}
This function is obviously increasing and $p(x_1)=i$, $p(x_2)=j$, $p(x_3)=k$, $p(x_4)=l$.

\begin{lemma}\label{lemma:cycb6:1}
Let $\u{\Delta}$ be the gallery generated by $g$. Then $(\u{\delta},\u{\gamma})$ is a $p$-pair
of positive cosign.
\end{lemma}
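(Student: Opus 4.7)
The plan is to verify, for every $m\in g^{-1}(P_n)=\{1,2,\ldots,x_4\}$, the equality $\u{\gamma}^{\to p(m)}=\u{\delta}^{\to m}$, which is precisely the defining condition for $(\u{\delta},\u{\gamma})$ to be a $p$-pair of positive cosign. The walls underlying these two roots are easy to match: property~(i) of Corollary~\ref{corollary:cycl:2} gives $g(m)=f(q(m))=f(t_{p(m)})$ via~(\ref{eq:p}), so the $m$-th wall of $\u{\Delta}$ and the $p(m)$-th wall of $\u{\Gamma}$ both contain the same point of $P_n$; since each point of $P_n$ lies on a unique wall of $\Sigma(\D,\{A,B\})$, these walls coincide as a common hyperplane $L\subset\E^*$. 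Therefore $\u{\gamma}^{\to p(m)}=\pm\u{\delta}^{\to m}$, and only the sign remains to be pinned down.

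The main obstacle is matching the signs. By Lemma~\ref{lemma:to}, the signs agree if and only if the $p(m)$-th chamber of $\u{\Gamma}$ and the $m$-th chamber of $\u{\Delta}$ lie on the same side of $L$, given that both galleries begin at the common fundamental chamber $\mathscr C$ by Proposition~\ref{proposition:3.5}. Choose $m'\in\{1,2,3,4\}$ so that $m\in(x_{m'-1},x_{m'}]$; the border case $m=x_{m'-1}$ with $m'\ge 2$ is handled identically by using segment $m'-1$ in place of $m'$. The minor arc $\Phi=\arc{Y_{m'-1}Y_{m'}}$ meets $L$ only at the endpoint $g(m)$, because $\Phi$ has length strictly less than half the circumference of $S_{n/\pi}^1$ and so contains no pair of antipodal points.

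Now Corollary~\ref{corollary:cycl:2}\,(ii) supplies the separation $Y_{m'-1}\,f(t')\,g(m)\,Y'_{m'}$ as $t'\to q(m)^-$, and Lemma~\ref{lemma:circ:5} applied with $A=Y_{m'-1}$, $D=Y'_{m'}$, $X=g(m)$ then yields that $L$ does not separate $Y_{m'-1}$ from $f(t')$, so the $p(m)$-th chamber of $\u{\Gamma}$ lies on the $Y_{m'-1}$-side of $L$. Exactly the same reasoning, applied now to the monotonic path $g_{m'}$ itself, shows that for $t''<m$ close to $m$ the point $g(t'')$ lies in $\arc{Y_{m'-1}g(m)}\setminus\{g(m)\}\subset\Phi$ and so also sits on the $Y_{m'-1}$-side of $L$; hence the $m$-th chamber of $\u{\Delta}$ is on that side as well. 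Both chambers therefore lie on the same side of $L$, which forces $\u{\gamma}^{\to p(m)}$ and $\u{\delta}^{\to m}$ to share a sign and completes the verification.
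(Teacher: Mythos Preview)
Your proof is correct and follows essentially the same approach as the paper's: identify the common wall via property~\ref{corollary:cycl:2:p:i} of Corollary~\ref{corollary:cycl:2}, then use property~\ref{corollary:cycl:2:p:ii} together with Lemma~\ref{lemma:circ:5} (applied with $A=Y_{m'-1}$, $D=Y'_{m'}$, $X=g(m)$ on the arc $\Phi=\arc{Y_{m'-1}Y_{m'}}$) to place the $p(m)$-th chamber of $\u{\Gamma}$ on the $Y_{m'-1}$-side of $L$, and then argue the same for the $m$-th chamber of $\u{\Delta}$. Two small remarks: your phrase ``only at the endpoint $g(m)$'' should read ``only at the single point $g(m)$'', since $g(m)$ need not be an endpoint of $\Phi$; and your sentence about the ``border case $m=x_{m'-1}$'' is redundant given your choice $m\in(x_{m'-1},x_{m'}]$. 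The only substantive difference from the paper is cosmetic: the paper splits into cases according to whether $Y_{m'-1}\sim_L O$ and invokes Lemma~\ref{lemma:circ:2} for the $g$-side, whereas you argue directly that both chambers lie on the $Y_{m'-1}$-side of $L$ and hence on the same side relative to the fundamental chamber, which is a slight streamlining.
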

\begin{proof}
Let $z=1,\ldots,x_4$. By property~\ref{corollary:cycl:2:p:i} of Corollary~\ref{corollary:cycl:2}, we get $g(z)=f(t_{p(z)})$.
We denote this point by $X$. By Definition~\ref{definition:cycb6:1},
$X\in\H_{\u{\delta}^{\to z}}\cap\H_{\u{\gamma}^{\to p(z)}}$.
Thus $\H_{\u{\delta}^{\to z}}=\H_{\u{\gamma}^{\to p(z)}}$, whence
\begin{equation}\label{eq:cycb6:7}
\u{\delta}^{\to z}=\pm\u{\gamma}^{\to p(z)}.
\end{equation}
This proves that $(\u{\delta},\u{\gamma})$ is a $p$-pair.
It remains to determine the sign in~(\ref{eq:cycb6:7}). Let $m=1,\ldots,4$ be the index such that $x_{m-1}<z\le x_m$.
Then we get $z\in g_m^{-1}(P_n)\setminus\{x_{m-1}\}$.
Thus by Corollary~\ref{corollary:cycl:2}\ref{corollary:cycl:2:p:ii},
we get $Y_{m-1}f(t')XY'_m$ as
$t'\to t_{p(z)}^-$.
Applying Lemma~\ref{lemma:circ:5} for $L=\H_{\u{\gamma}^{\to p(z)}}$, $\Phi=\arc{Y_{m-1}Y_m}$, $A=Y_{m-1}$, $D=Y'_m$, $Z=f(t')$ and $X=X$,
we get $f(t')\sim_L Y_{m-1}$.
If $Y_{m-1}\sim_L O$, then $f(t')\sim_L O$, whence $\mathscr C_{p(z)}\sim_L O$ and $\u{\gamma}^{\to p(z)}<0$ by Lemma~\ref{lemma:to}.
Similarly, if $Y_{m-1}\notsim_L O$, then $\mathscr C_{p(z)}\notsim_L O$ and $\u{\gamma}^{\to p(z)}>0$.

By Lemma~\ref{lemma:circ:2}, we get $Y_{m-1} g(z') X Y'_m$ for any $z'\in\R$
such that $x_{m-1}<z'<z\le x_m$. Applying Lemma~\ref{lemma:circ:5} with the same set of parameters
as just before except $Z=g(z')$, we get $g(z')\sim_L Y_{m-1}$. Arguing as in the previous paragraph,
we get $\u{\delta}^{\to z}<0$ if $Y_{m-1}\sim_L O$ and $\u{\delta}^{\to z}>0$ if $Y_{m-1}\notsim_L O$.
It follows from~(\ref{eq:cycb6:7}) that
$
\u{\delta}^{\to z}=\u{\gamma}^{\to p(z)}
$
in both cases. Thus the cosign of the $p$-pair$(\u{\delta},\u{\gamma})$ is positive.
\end{proof}

\begin{theorem}\label{theorem:3} Let $\ord(AB)<\infty$, $\u{s}$ be a finite sequence in $A$ and $B$, $\u{\gamma}\subset\u{s}$ and
$(i,k)$ and $(j,l)$ be special $\mu$- and $\lm$-pairs respectively for $\u{\gamma}$, where
$i<j<k<l$. Let the index $u\in\{1,2\}$, the path $g$, the numbers $x_0,\ldots,x_5$ and map $p$ be chosen as above and
$\u{\Delta}$ be the gallery generated by $g$.
Let us choose $\c\in\{\a,\b\}$ so that $g(1)\in\H_\c$ and define
$\u{t}=(\c,\bar\c,\ldots)_{x_4}$.

Then there exists a map $\phi:\SubExpr(\u{t})\to\SubExpr(\u{s})$ such that $\phi(\u{\delta})=\u{\gamma}$ and
the pair $(p,\phi)$ is a morphism $\u{t}\to\u{s}$ of positive cosing in $\Expr_\D$.

If $u=1$,
then $\u{\gamma}$ is the maximal vertex of any cycle
$\phi(\Cyc^1_\c(x,x_4))$, where $x_1\le x<x_2$, and the only edges of the sum of these cycles
with endpoint $\u{\gamma}$ are $\{\u{\gamma},\f_{i,k}\u{\gamma}\}$ and $\{\u{\gamma},\f_{j,l}\u{\gamma}\}$.\linebreak
If $u=2$, then $\u{\gamma}$ is the maximal vertex of $\phi(\Cyc^2_\c(x_1,x_2))$
and the only edges of this cycle with endpoint $\u{\gamma}$ are $\{\u{\gamma},\f_{i,k}\u{\gamma}\}$ and $\{\u{\gamma},\f_{j,l}\u{\gamma}\}$.
\end{theorem}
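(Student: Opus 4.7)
The plan is to upgrade Lemma~\ref{lemma:cycb6:1} to a full morphism of categories and then realise the two required cycles as images of the dihedral cycles already constructed in Sections~\ref{cycles_first} and~\ref{cycles_second}. Following the last paragraph of the proof of Theorem~\ref{lemma:15}, every subexpression of $\u{t}$ is uniquely of the form $\f_{k_1}\cdots\f_{k_m}\u{\delta}$ for some subset $\{k_1,\ldots,k_m\}\subset\{1,\ldots,x_4\}$, so I set $\phi(\f_{k_1}\cdots\f_{k_m}\u{\delta})=\f_{p(k_1)}\cdots\f_{p(k_m)}\u{\gamma}$. Since Lemma~\ref{lemma:cycb6:1} already gives that $(\u{\delta},\u{\gamma})$ is a $p$-pair of positive cosign, Lemma~\ref{lemma:cat:1} promotes this to the statement that $(p,\phi)$ is a well-defined morphism of positive cosign in $\Expr_{\alpha,\beta}$.

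The numerical identifications $p(x_1+n)=k$, $p(x_2+n)=l$ (when $u=1$) and $p(2x_1-x_2+2n)=l$ (when $u=2$) all follow from one elementary fact: any arc of $S^1_{n/\pi}$ connecting two antipodal points of $P_n$ has length $n$. The points $M_1,M_2$ are antipodal on $\H_\mu$, whence $x_3-x_1=n$; in case $u=1$ the antipodes $\Lm_1,\Lm_2$ give $x_4-x_2=n$; in case $u=2$ the segment $g_4$ retraces $\arc{\Lm_2 M_2}$ backwards, so $x_4-x_3=x_3-x_2$ and $x_4=2x_1-x_2+2n$. Moreover, the restriction of $g$ to $[1/2,x_4+1/2]$ is exactly the path generating the maximal gallery in the construction of $\Cyc^2_\c(x_1,x_2)$ when $u=2$ and of $\Cyc^1_\c(x,x_4)$ (for every $x\in\{x_1,\ldots,x_2-1\}$) when $u=1$, so these cycles all live in $\Sub(\u{t},\u{t}^{\u{\delta}})$ with maximal vertex~$\u{\delta}$.

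With the morphism in hand, the edge counts are formal. For $u=2$, the two top edges $\{\u{\delta},\f_{x_1,x_1+n}\u{\delta}\}$ and $\{\u{\delta},\f_{x_2,2x_1-x_2+2n}\u{\delta}\}$ of $\Cyc^2_\c(x_1,x_2)$ are carried by Lemma~\ref{lemma:17} to $\{\u{\gamma},\f_{i,k}\u{\gamma}\}$ and $\{\u{\gamma},\f_{j,l}\u{\gamma}\}$. For $u=1$, the top edges of $\Cyc^1_\c(x,x_4)$ are $\{\u{\delta},\f_{x,x+n}\u{\delta}\}$ and $\{\u{\delta},\f_{x+1,x+n+1}\u{\delta}\}$; since $\f_{x+1,x+n+1}=\f_{(x+1),(x+1)+n}$, the second top edge of the $x$-th cycle coincides with the first top edge of the $(x+1)$-st cycle, and summing modulo~$2$ over $x=x_1,\ldots,x_2-1$ telescopes these interior edges away, leaving only $\{\u{\delta},\f_{x_1,x_1+n}\u{\delta}\}$ and $\{\u{\delta},\f_{x_2,x_2+n}\u{\delta}\}$, which also map to $\{\u{\gamma},\f_{i,k}\u{\gamma}\}$ and $\{\u{\gamma},\f_{j,l}\u{\gamma}\}$. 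Lemma~\ref{lemma:21} shows $\phi$ is strictly order-preserving and thus injective on $\SubExpr(\u{t},\u{t}^{\u{\delta}})$, so $\u{\gamma}$ remains the unique maximal vertex of each image cycle and no non-top edge can accidentally become incident to~$\u{\gamma}$.

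The chief delicacy I expect is the verification of the parameter ranges required by Sections~\ref{cycles_first}--\ref{cycles_second}: namely $1\le x_1<x_2\le n$ for $\Cyc^2_\c(x_1,x_2)$, and $1\le x\le n-1$ together with $x+n+1\le x_4\le 2n$ for each $\Cyc^1_\c(x,x_4)$. These are not automatic, but they follow from the fact that each $g_m$ is parametrised on the minor arc $\arc{Y_{m-1}Y_m}$ (imposed in Section~\ref{Reducing_a_special_vertex}), so that consecutive segments combine into sub-arcs of total length at most~$n$, together with Lemma~\ref{lemma:cycl:2} applied to the two special pairs $(i,k)$ and $(j,l)$, which pins down the signs of the roots $\u{\gamma}^{\to i},\u{\gamma}^{\to j}$ and hence the positions of $M_1,\Lm_2,M_2,\Lm_u$ along~$g$.
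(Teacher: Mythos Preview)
Your argument is correct and follows essentially the same route as the paper: build $\phi$ from Lemma~\ref{lemma:cycb6:1} via Lemma~\ref{lemma:cat:1}, identify the top edges of the dihedral cycles with the desired double folds (telescoping for $u=1$), and transport maximality through Lemma~\ref{lemma:21}. You supply more detail than the paper does---the arc-length identities $x_3=x_1+n$, $x_4=x_2+n$ (case $u=1$) or $x_4=2x_1-x_2+2n$ (case $u=2$), the explicit injectivity of $\phi$ on $\SubExpr(\u{t},\u{t}^{\u{\delta}})$, and the parameter-range check---but the skeleton is identical.
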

\begin{proof}
Existence of $\phi$ follows from Lemmas~\ref{lemma:cycb6:1} and~\ref{lemma:cat:1}.
Suppose that 
$u=1$.
Then $\u{\delta}=\u{1}^{x_4}$. For each $x$ such that $x_1\le x<x_2$,
the maximal vertex of the cycle $\Cyc^1_\c(x,x_4)$ is $\u{\delta}$ and the only edges with endpoint $\u{\delta}$ are
$\{\u{\delta},\f_{x,x+n}\u{\delta}\}$ and $\{\u{\delta},\f_{x+1,x+1+n}\u{\delta}\}$.
To understand why these facts are true, note that the paths $g$ and thus the galleries
$\u{\Delta}$ constructed in this section and Section~\ref{cycles_first} are the same (where $y=x_4$).
Taking the sum of these cycles $\sum_{x=x_1}^{x_2-1}\Cyc^1_\c(x,x_4)$, we get an even subgraph whose
only edges with endpoint $\u{\delta}$ are
$\{\u{\delta},\f_{x_1,x_1+n}\u{\delta}\}$ and $\{\u{\delta},\f_{x_2,x_2+n}\u{\delta}\}$.
It remains to apply $\phi$, which preserves the order by Lemma~\ref{lemma:21}.

In the remaining case $u=2$, the arguments are similar with the exception that we deal with
only one cycle $\Cyc^2_\c(x_1,x_2)$.
\end{proof}

\subsection{Infinite case}\label{Infinite_case} Here we consider the case $n=\infty$. Let $f:[0,1]\to\Sigma_0(\D,\{A,B\})$
be an alcove walk generating the gallery $\u{\Gamma}$.
Let
$$
\H_\mu\cap\Sigma_0(\D,\{A,B\})=\{M\},\quad\H_\lm\cap\Sigma_0(\D,\{A,B\})=\{\Lm\}$$
$$
f^{-1}(\Sigma_1(\D,\{A,B\}))=\{t_1<\cdots<t_N\}.
$$
As $f(t_i),f(t_k)\in\H_\mu$, we get $f(t_i)=f(t_k)=M$. Similarly, $f(t_j)=f(t_l)=\Lm$.
By Lemma~\ref{lemma:cycl:2}, the hyperplane $\H_\mu$ separates the fundamental chamber $\mathscr F$ and
$$
f((t_i,t_{i+1})\cup(t_{i+1},t_{i+2})\cup\cdots(t_{k-1},t_k))\subset \mathscr C_{i+1}\cup \mathscr C_{i+2}\cup\cdots\cup \mathscr C_k.
$$
As $t_j\in(t_i,t_k)$ and $f$ is continuous, this hyperplane separates $\mathscr F$ and $f(t_j)=\Lm$.
Therefore, $\H_\lm$ does not separate $\mathscr F$ and $M$.

Again by Lemma~\ref{lemma:cycl:2}, the hyperplane $\H_\lm$ separates $\mathscr F$ and
$$
f((t_j,t_{j+1})\cup(t_{j+1},t_{j+2})\cup\cdots\cup(t_{l-1},t_l))\subset \mathscr C_{j+1}\cup \mathscr C_{j+2}\cup\cdots\cup \mathscr C_l.
$$
As $t_k\in(t_j,t_l)$ and $f$ is continuous, this hyperplane separates $\mathscr F$ and $f(t_k)=M$.
It is a contradiction and this case is impossible.



%
%
\def\xii{1.05}

\begin{center}
\scalebox{0.8}{
\begin{tikzpicture}[baseline=-33pt]
\fill[gray!30] (0,-2)--(-0.5,0.5)--(0.5,0.5)--(0,-2);


\draw[domain=7.4:-2, variable=\z, line width=\st] plot ({\z}, {((1+\xii)*((\xii-1)*\z^2+2))^(1/2)/(1+\xii)-1.2})node[left]{$\Sigma_0(\D,\{A,B\})$};

\draw[line width=\st] (-.7496340574, -2.937042572)--(2,0.5)node[above]{$\H_\mu$};

\draw[line width=\st,dashed](0,-2)--(-0.5,0.5);
\draw[line width=\st,dashed](0,-2)--(0.5,0.5);

\draw[fill] (1.44, -.2) circle(0.06)node[above left]{$M$}node[below right]{$t_i$};
\draw[line width=\st] (-1.031151461, -2.613780632)--(4.2,0.5)node[above]{$\H_\lm$};
\draw[fill] (3.1920, -.10) circle(0.06)node[above left]{$\Lm$}node[below right]{$t_j$};
\draw[line width=\st] (-1.133608396, -2.393614026)--(7.2,0.5)node[above]{$\H_\mu$};
\draw[fill] (6.336, .2) circle(0.06)node[above left]{$M$}node[below right]{$t_k$};
\end{tikzpicture}}
\end{center}

\section{Proof of Theorem~\ref{theorem:main}} In this section, we as usual assume that $(W,S)$ is the Coxeter system
associated to a based root system $(\Phi,\Pi)$ in a 
real space $V$ such that $\Pi$ is a basis of $V$.
Replacing the initial Coxeter group with a standard parabolic subgroup, we assume that $S$
consists only of the entries of the expression $\u{s}$ from the formulation of Theorem~\ref{theorem:main}.

\subsection{Triangles and squares}\label{Triangles_and_squares} Let $\u{\gamma}$ be a subexpression of an expression $\u{s}$ in $S$
with target~$w$. For indices $i,j,k$ such that $1\le i<j<k\le|\u{s}|$, we will consider the following three types of
subgraphs of $\Sub(\u{s},w)$ with maximal vertex $\u{\gamma}$:

{\renewcommand{\labelenumi}{{\rm\theenumi}}
\renewcommand{\theenumi}{{\rm($\Tr^\arabic{enumi})$}}
\begin{enumerate}
\item\label{Tr:1} If $\u{\gamma}^{\to i}=\pm\u{\gamma}^{\to j}$ and $\u{\gamma}^{\to j}=\u{\gamma}^{\to k}>0$, then
$\Tr^1_{i,j,k}(\u{\gamma})$ is the subgraph
$$
\begin{tikzcd}[row sep=18pt]
&\u{\gamma}\arrow[no head]{ld}\arrow[no head]{dr}\\
\f_{i,j}\u{\gamma}\arrow[no head]{rr}&&\f_{i,k}\u{\gamma}\\
\end{tikzcd}
$$

\vspace{-20pt}

\item\label{Tr:2} If $\pm\u{\gamma}^{\to i}=\pm\u{\gamma}^{\to j}=\u{\gamma}^{\to k}>0$, then
$\Tr^2_{i,j,k}(\u{\gamma})$ is the subgraph
$$
\begin{tikzcd}[row sep=18pt]
&\u{\gamma}\arrow[no head]{rd}\arrow[no head]{dl}&\\
\f_{j,k}\u{\gamma}\arrow[no head]{rr}&&\f_{i,k}\u{\gamma}
\end{tikzcd}
$$

\item\label{Tr:3} If $\u{\gamma}^{\to i}=\pm\u{\gamma}^{\to j}$ and $\u{\gamma}^{\to j}=\u{\gamma}^{\to k}>0$, then
$\Tr^3_{i,j,k}(\u{\gamma})$ is the subgraph
$$
\begin{tikzcd}[row sep=18pt]
&\u{\gamma}\arrow[no head]{ld}\arrow[no head]{rd}&\\
\f_{i,j}\u{\gamma}\arrow[no head]{rr}&&\f_{j,k}\u{\gamma}
\end{tikzcd}
$$
\end{enumerate}


For indices $i,j,k,l$ such that
$1\le i<j<k<l\le|\u{s}|$,
we will consider the following two types of subgraphs of $\Sub(\u{s},w)$ with maximal vertex $\u{\gamma}$:

{\renewcommand{\labelenumi}{{\rm\theenumi}}
\renewcommand{\theenumi}{{\rm($\Sq^\arabic{enumi})$}}
\begin{enumerate}
\item\label{Sq:1} If $\pm\u{\gamma}^{\to i}=\u{\gamma}^{\to j}>0$ and $\pm\u{\gamma}^{\to k}=\u{\gamma}^{\to l}>0$, then
$\Sq^1_{i,j,k,l}(\u{\gamma})$ is the subgraph
$$
\begin{tikzcd}[row sep=12pt]
&\u{\gamma}\arrow[no head]{ld}\arrow[no head]{dr}\\
\f_{i,j}\u{\gamma}\arrow[no head]{rd}&&\f_{k,l}\u{\gamma}\arrow[no head]{ld}\\
&\f_{i,j}\f_{k,l}\u{\gamma}
\end{tikzcd}
$$
\item\label{Sq:2} If $\pm\u{\gamma}^{\to i}=\u{\gamma}^{\to l}>0$ and $\pm\u{\gamma}^{\to j}=\u{\gamma}^{\to k}>0$, then
$\Sq^2_{i,j,k,l}(\u{\gamma})$ is the subgraph
$$
\begin{tikzcd}[row sep=12pt]
&\u{\gamma}\arrow[no head]{ld}\arrow[no head]{dr}\\
\f_{j,k}\u{\gamma}\arrow[no head]{rd}&&\f_{i,l}\u{\gamma}\arrow[no head]{ld}\\
&\f_{i,l}\f_{j,k}\u{\gamma}
\end{tikzcd}
$$
\end{enumerate}
}

\subsection{Moving an edge}
Before formulating the next result, let us remember the definitions of the color of an edge (Section~\ref{Expressions_and_subexpressions})
and of special pairs (Definition~\ref{definition:cycb6:3}) and how edges are moved (Definition~\ref{definition:cycb6:2}).

\begin{lemma}\label{lemma:moving}
Any edge $\{\u{\gamma},\u{\delta}\}$ of $\Sub(\u{s},w)$ of color $\alpha$ such that $\u{\gamma}>\u{\delta}$
can be $\u{\gamma}$-moved to an edge $\{\u{\gamma},\f_{i,j}\u{\gamma}\}$ of the same color
for some special $\alpha$-pair $(i,j)$ for $\u{\gamma}$
by a sequence of subgraphs of types $\Tr^2$, $\Tr^3$, $\Sq^1$ with maximal vertex $\u{\gamma}$. 
\end{lemma}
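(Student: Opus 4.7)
The plan is to reduce Lemma~\ref{lemma:moving} to a combinatorial statement about the sign pattern of $\u{\gamma}^{\to\cdot}$ at the $\alpha$-wall positions. Let $p_1<p_2<\cdots<p_n$ enumerate the indices $i$ with $\u{\gamma}^{\to i}=\pm\alpha$, and define $s_l\in\{-,+\}$ by $\u{\gamma}^{\to p_l}=s_l\alpha$. Two preparatory observations are needed. First, $s_1=-$: the $\alpha$-wall does not appear among $L_1,\dots,L_{p_1-1}$ in the gallery corresponding to $\u{\gamma}$, so the chambers $C_1,\dots,C_{p_1}$ all lie on the same side of that wall, and Lemma~\ref{lemma:to} then forces $\u{\gamma}^{\to p_1}<0$. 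Second, Definition~\ref{definition:cycb6:3} translates as follows: $(p_a,p_b)$ is a special $\alpha$-pair for $\u{\gamma}$ if and only if $s_1=\cdots=s_a=-$ and $s_{a+1}=\cdots=s_b=+$.

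Writing the starting edge as $\{\u{\gamma},\f_{p_{a_0},p_{b_0}}\u{\gamma}\}$, the hypothesis $\u{\gamma}>\u{\delta}$ forces $s_{b_0}=+$. I would build the required sequence in three stages; after each stage, the accumulated sum of subgraphs keeps exactly one edge incident to $\u{\gamma}$, which the next stage transports further toward a special one. \textbf{Stage 1} (fix the sign at $a$): if $s_{a_0}=+$, attach $\Tr^2_{p_1,p_{a_0},p_{b_0}}$, which is applicable since $s_1=-$; it $\u{\gamma}$-moves the edge to $\{\u{\gamma},\f_{p_1,p_{b_0}}\u{\gamma}\}$, giving a pair $(p_a,p_b)$ with $s_a=-$, $s_b=+$. \textbf{Stage 2} (enforce (iii)): while some $a<k<b$ has $s_k=-$, attach $\Tr^2_{p_a,p_k,p_b}$, which $\u{\gamma}$-moves the edge to $\{\u{\gamma},\f_{p_k,p_b}\u{\gamma}\}$. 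The number of indices in $(a,b)$ with $s=-$ strictly decreases at each iteration, so the stage terminates with $s_{a+1}=\cdots=s_b=+$. \textbf{Stage 3} (enforce (ii)): if (ii) still fails, let $l$ be the smallest index with $s_l=+$; then $l>1$ (as $s_1=-$) and $l<a$, so $s_{l-1}=-$. The square $\Sq^1_{p_{l-1},p_l,p_a,p_b}$ is applicable (strictly increasing indices with signs ${-}{+}{-}{+}$), and attaching it $\u{\gamma}$-moves the edge to $\{\u{\gamma},\f_{p_{l-1},p_l}\u{\gamma}\}$; the pair $(p_{l-1},p_l)$ is manifestly special.

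Composing the stages via Proposition~\ref{lemma:cycl:3}\ref{lemma:cycl:3:p:3} gives the required $\u{\gamma}$-move by a sequence of subgraphs of types $\Tr^2$ and $\Sq^1$, all with maximal vertex $\u{\gamma}$. The color $\alpha$ is preserved throughout because every intermediate pair uses only positions in $\{p_1,\dots,p_n\}$. The main obstacle I anticipate is the exclusion of $\Sq^2$ from the permitted toolkit: it rules out collapsing Stages~2 and~3 into one square move, since without first removing the internal $-$'s inside $(a,b)$ by triangles, the candidate square of Stage~3 cannot be guaranteed to land on a special pair. Handling (iii) by iterated triangles before correcting (ii) by a single square is what makes the restricted toolkit of $\Tr^2$, $\Tr^3$, and $\Sq^1$ suffice.
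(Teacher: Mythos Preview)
Your argument is correct. The reformulation via the sign string $s_1,\ldots,s_n$ is clean, the justification of $s_1=-$ is exactly Lemma~\ref{lemma:to} combined with Lemma~\ref{corollary:12}, and each of your three stages is a legitimate $\u\gamma$-move: in every step the triangle or square you attach has precisely two edges at $\u\gamma$, one of which is the current edge, so the accumulated sum retains a unique $\u\gamma$-incident edge. Termination of Stage~2 is as you say, and the pair $(p_{l-1},p_l)$ produced by Stage~3 is visibly special since $l$ is the first occurrence of a $+$.

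The paper organises the same reductions differently: it runs a single induction on the lexicographic pair $(q,q-p)$, where the current edge is $\{\u\gamma,\f_{p,q}\u\gamma\}$, and at each step dispatches one of three cases. When $\u\gamma^{\to p}=\alpha$ it applies $\Tr^3_{r,p,q}$ (replacing the \emph{right} endpoint, moving to $(r,p)$), whereas your Stage~1 applies $\Tr^2_{p_1,p_{a_0},p_{b_0}}$ (replacing the \emph{left} endpoint, moving to $(p_1,p_{b_0})$). The remaining two cases in the paper correspond to your Stages~3 and~2 respectively. Your staged algorithm is more explicit and, incidentally, shows that $\Tr^3$ is never needed---only $\Tr^2$ and $\Sq^1$ suffice---which is a mild sharpening of the lemma as stated. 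The paper's inductive packaging is shorter to write but less constructive; both routes rest on the same elementary moves.
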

\begin{proof}
Let $\u{\delta}=\f_{p,q}\u{\gamma}$. In view of Proposition~\ref{lemma:cycl:3}, we can apply induction
with respect to the lexicographic order on the pair $(q,q-p)$, called the {\it parameter} only in this proof.
As $\u{\gamma}>\u{\delta}$ and these subexpressions differ maximally at $q$,
we get $\u{\gamma}^{\to q}=\alpha>0$.
Thus $\u{\gamma}^{\to p}=\pm\alpha$ by Proposition~\ref{proposition:applicable}.
We only need to consider the case where $(p,q)$ is not a special $\alpha$-pair for $\u{\gamma}$.
This can happen in the following cases (see Definition~\ref{definition:cycb6:3}).

{\it Case 1: $\u{\gamma}^{\to p}=\alpha$.} By Corollary~\ref{corolary:0}, there exists an index $r<p$
such that $\u{\gamma}^{\to r}=-\alpha$. We get
$$
\{\u{\gamma},\u{\delta}\}+\Tr^3_{r,p,q}(\u{\gamma})=\{\u{\gamma},\f_{r,p}\u{\gamma}\}+\{\f_{r,p}\u{\gamma},\u{\delta}\}.
$$
Thus $\{\u{\gamma},\u{\delta}\}$ has been $\u{\gamma}$-moved to $\{\u{\gamma},\f_{r,p}\u{\gamma}\}$ with parameter $(p,p-r)<(q,q-p)$.

{\it Case 2: there exists an index $k<p$ with $\u{\gamma}^{\to k}=\alpha$.}
By Corollary~\ref{corolary:0}, there exists an index $r<k$ such that $\u{\gamma}^{\to r}=-\alpha$.
We get
$$
\{\u{\gamma},\u{\delta}\}+\Sq^1_{r,k,p,q}(\u{\gamma})=\{\u{\gamma},\f_{r,k}\u{\gamma}\}+\{\u{\delta},\f_{r,k}\u{\delta}\}+\{\f_{r,k}\u{\gamma},\f_{r,k}\u{\delta}\}.
$$
Thus $\{\u{\gamma},\u{\delta}\}$ has been $\u{\gamma}$-moved to $\{\u{\gamma},\f_{r,k}\u{\gamma}\}$
with parameter $(k,r-k)<(q,q-p)$.

{\it Case 3: there exists an index $k$ such that $p<k<q$ and $\u{\delta}^{\to k}=-\alpha$.}
We get
$$
\{\u{\gamma},\u{\delta}\}+\Tr^2_{p,k,q}(\u{\gamma})=\{\u{\gamma},\f_{k,q}\u{\gamma}\}+\{\u{\delta},\f_{k,q}\u{\gamma}\}.
$$
Thus $\{\u{\gamma},\u{\delta}\}$ has been $\u{\gamma}$-moved to $\{\u{\gamma},\f_{k,q}\u{\gamma}\}$
with parameter $(q,q-k)<(q,q-p)$.
\end{proof}

\subsection{Generating cycles}\label{Generating_cycles} Now we are ready to prove Theorem~\ref{theorem:main}.
We denote by $\FDR(W,S)$ the set of all finite dihedral reflection subgroups of $W$.
For each of this subgroups $\D$, 
we consider the following disjoint subsets:
$$
K^1_\D=\{(x,y)\in\Z^2\suchthat 1\le x<n,x+n+1\le y\le 2n\},\;\;
K^2_\D=\{(x,y)\in\Z^2\suchthat 1\le x<y\le n\},
$$
where $n=|\D|/2$, and denote $K_\D=K^1_\D\cup K^2_\D$.

Let us fix an expression $\u{s}$ in $S$.
Let $\D\in\FDR(W,S)$ and $(x,y)\in K_\D$.
We set $u=1$ and $z=y$ if $(x,y)\in K^1_\D$ and $u=2$ and $z={2x-y+2n}$ if $(x,y)\in K^2_\D$.
We denote by $\mathbf{Cyc}_\D(\u{s},x,y)$
the set of cycles $\phi(\Cyc^u_\c(x,y))$ for all elements $\c$ of the canonical simple system for $\D$
and all morphisms $(p,\phi):(\c,\bar\c,\ldots)_z\to\u{s}$ of positive cosign
in the category $\Expr_\D(W,S)$.
Finally, we define
$$
\mathbf{Cyc}(\u{s})=\bigcup_{\D\in\FDR(W,S)}\;\bigcup_{(x,y)\in K_\D}\;\mathbf{Cyc}_\D(\u{s},x,y).
$$

\begin{remark}\label{rem:3}
\rm
It follows from the constructions of cycles in Sections~\ref{cycles_first} and~\ref{cycles_second} that
all cycles in $\mathbf{Cyc}_\D(\u{s},x,y)$
have lengths $|\D|/2+2$.
\end{remark}

\noindent
Finally, let $\mathbf{Cyc}(\u{s},w)$ be the subset of $\mathbf{Cyc}(\u{s})$
consisting of cycles contained in $\bm{\mathfrak S}(\u{s},w)$.

\begin{definition}
Let $\u{\gamma}$ be a subexpression of $\u{s}$ with target $w$.
A subgraph $F$ of $\bm{\mathfrak S}(\u{s},w)$
is said to be supported on subexpressions $\le\u{\gamma}$ (respectively $<\u{\gamma}$)
if $\u{\delta}\le\u{\gamma}$ (respectively $\u{\delta}<\u{\gamma}$) for any endpoint $\u{\delta}$
of any edge belonging to $F$.
\end{definition}

If $\u{\epsilon}$ is the maximal vertex of $\bm{\mathfrak S}(\u{s},w)$, then obviously any subgraph of
$\bm{\mathfrak S}(\u{s},w)$ is supported on subexpressions $\le\u{\epsilon}$.
Now we are ready to prove our main result.

\begin{theorem}\label{theorem:4}
Any even subgraph of $\bm{\mathfrak S}(\u{s},w)$ 
supported on subexpressions $\le\u{\gamma}$
is a sum of some cycles of $\mathbf{Cyc}(\u{s},w)$ and cycles of types $\Tr^1$, $\Tr^2$, $\Tr^3$, $\Sq^1$, $\Sq^2$
all supported on subexpressions $\le\u{\gamma}$.
\end{theorem}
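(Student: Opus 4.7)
The plan is to induct on the maximal subexpression $\u{\gamma}$ with respect to the total order on $\SubEx(\u{s},w)$. The base case, $\u{\gamma}$ minimal, is trivial since no edge can have both endpoints equal to $\u{\gamma}$. For the inductive step, given an even subgraph $G$ of $\bm{\mathfrak S}(\u{s},w)$ supported on subexpressions $\le\u{\gamma}$, the strategy is to add to $G$ a combination of admissible generators (triangles $\Tr^{1,2,3}$, squares $\Sq^{1,2}$, and cycles from $\mathbf{Cyc}(\u{s},w)$, all supported on $\le\u{\gamma}$) so that the resulting even subgraph has no edge incident to $\u{\gamma}$ and is therefore supported on subexpressions strictly less than $\u{\gamma}$, at which point the inductive hypothesis applies.

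To remove the edges of $G$ at $\u{\gamma}$, I first apply Lemma~\ref{lemma:moving} to each one separately. Every edge $\{\u{\gamma},\u{\delta}\}$ of $G$ satisfies $\u{\gamma}>\u{\delta}$, and by Lemma~\ref{lemma:moving} it can be $\u{\gamma}$-moved to a special pair edge of the same color by a sequence of $\Tr^2$, $\Tr^3$, $\Sq^1$ subgraphs with maximal vertex $\u{\gamma}$. Summing all these corrections with $G$ yields an even subgraph $G'$ supported on $\le\u{\gamma}$ whose edges at $\u{\gamma}$ are exclusively of the form $\{\u{\gamma},\f_{i,j}\u{\gamma}\}$ for special pairs $(i,j)$. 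By Definition~\ref{definition:cycb6:3}, for each positive root $\alpha$ there is at most one special $\alpha$-pair for $\u{\gamma}$, so edges of the same color cancel modulo $2$ and the remaining edges at $\u{\gamma}$ are parametrised by a finite set $C$ of distinct colors; the even condition forces $|C|$ to be even.

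Next I pair up the colors of $C$ and kill each pair by adding a single admissible generator. Given distinct $\mu,\lambda\in C$ with respective special pairs $(i,k)$ and $(j,l)$, assume $i<j$ (swap otherwise). The four indices fall into exactly three orderings: the disjoint case $i<k<j<l$, cancelled by adding $\Sq^1_{i,k,j,l}(\u{\gamma})$; the nested case $i<j<l<k$, cancelled by $\Sq^2_{i,j,l,k}(\u{\gamma})$; and the interleaved case $i<j<k<l$, handled via the dihedral machinery. For the last case, let $\Rcl(\mu,\lambda)=\Rcl(\alpha,\beta)$ for properly situated $\alpha,\beta$ (Theorem~\ref{theorem:2}), and project $\u{\Gamma}$ to $\D=\<\t_\alpha,\t_\beta\>$ via Theorem~\ref{lemma:15}, obtaining a morphism $(p,\phi)$ in $\Expr_{\alpha,\beta}(W,S)$ of positive cosign with $\phi(\u{\pi})=\u{\gamma}$. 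Since $p$ is a bijection onto $\u{\Gamma}(\D)$ and the cosign is positive, the preimages $(p^{-1}(i),p^{-1}(k))$ and $(p^{-1}(j),p^{-1}(l))$ are special pairs for $\u{\pi}$ in the same interleaved order. If $\alpha,\beta$ are in elliptic position, Theorem~\ref{theorem:3} applied to $\u{\pi}$ produces a sum of cycles in $\mathbf{Cyc}_{\alpha,\beta}(\u{s},\cdot,\cdot)\subseteq\mathbf{Cyc}(\u{s},w)$ whose only edges at $\u{\gamma}$ are the two being targeted. If $\alpha,\beta$ are in degenerate or hyperbolic position, the argument of Section~\ref{Infinite_case} applied to $\u{\pi}$ yields a contradiction, ruling out this subcase entirely.

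The main obstacle is the interleaved case. One must verify that the dihedral projection $\pr_\D$ transports specialness of pairs faithfully, so that Theorem~\ref{theorem:3} applies verbatim on the dihedral side, and that the cycles it produces, when pulled back through $\phi$, remain supported on subexpressions $\le\u{\gamma}$ with only the pair of edges at $\u{\gamma}$ that one wishes to cancel. The first point follows because morphisms of positive cosign preserve the defining relations of special pairs via the bijection $p$ onto $\u{\Gamma}(\D)$; the second uses that such morphisms preserve the order (Lemma~\ref{lemma:21}), so every $\phi(\Cyc^u_\c(\cdot,\cdot))$ has maximal vertex exactly $\u{\gamma}$. Once all $|C|/2$ pairs are cancelled, no edge at $\u{\gamma}$ remains and the induction closes.
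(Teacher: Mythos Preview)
Your overall strategy is the paper's, but there is one genuine gap: the assertion that ``for each positive root $\alpha$ there is at most one special $\alpha$-pair for $\u{\gamma}$'' is false. Definition~\ref{definition:cycb6:3} does force the \emph{first} index of a special $\alpha$-pair to be unique---if $(i,k)$ and $(i',k')$ are both special $\alpha$-pairs with $i<i'$, then condition~\ref{definition:cycb6:3:p:ii} for $(i',k')$ gives $k\ge i'$, while condition~\ref{definition:cycb6:3:p:iii} for $(i,k)$ then forces $i'\ge k$, so $i'=k$, contradicting $\u{\gamma}^{\to k}=\alpha=-\u{\gamma}^{\to i'}$---but the second index is not determined. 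For instance if $\u{\gamma}^{\to1}=-\alpha$ and $\u{\gamma}^{\to2}=\u{\gamma}^{\to3}=\alpha$, then both $(1,2)$ and $(1,3)$ are special $\alpha$-pairs. Hence after applying Lemma~\ref{lemma:moving} to two edges of the same color they may land on \emph{distinct} special-pair edges and fail to cancel. Note too that $\Tr^1$ never appears in your argument, although it is listed in the statement; this is precisely the missing ingredient.

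The paper avoids the issue by first invoking Veblen's theorem to reduce $G$ to a single cycle, so that exactly two edges meet $\u{\gamma}$, and then treating separately the case where the two resulting special pairs share an index (its Case~1). Since the first indices necessarily coincide, the relevant sub-case is $i=j$ with $k<l$, disposed of by adding $\Tr^1_{i,k,l}(\u{\gamma})$. If you prefer to keep your color-pairing scheme without Veblen, you must insert this same-color step: whenever two surviving special-pair edges share a color they have the form $\{\u{\gamma},\f_{i,k}\u{\gamma}\}$ and $\{\u{\gamma},\f_{i,l}\u{\gamma}\}$ with $k<l$, and adding $\Tr^1_{i,k,l}(\u{\gamma})$ cancels both while introducing no new edge at $\u{\gamma}$. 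With that correction, the remainder of your proposal---the three orderings for distinct colors, the projection to $\D$ via Theorem~\ref{lemma:15}, the transport of specialness, and the appeal to Theorem~\ref{theorem:3} and Section~\ref{Infinite_case}---is exactly the paper's Case~2.
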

\begin{proof}
Let $F$ be the subgraph under consideration.
We will apply induction on $\u{\gamma}$.
%
By Veblen's theorem~\cite{Veblen}, we may assume that $F$
is a cycle. Due to the inductive hypothesis, we may also assume that $\u{\gamma}$ is the maximal vertex of $F$,
that is, the maximal vertex of $\bm{\mathfrak S}(\u{s},w)$
incident with an edge belonging to $F$.

By Lemma~\ref{lemma:moving}, we can assume that the only edges belonging to $F$ with endpoint $\u{\gamma}$
are $\{\u{\gamma},\f_{i,k}\u{\gamma}\}$ and $\{\u{\gamma},\f_{j,l}\u{\gamma}\}$,
where $(i,k)$ and $(j,l)$ are special $\mu$- and $\lm$-pairs for $\u{\gamma}$.
Note that $i<k$, $j<l$ and $(i,k)\ne(j,l)$. We will consider several cases and
indicate how to apply induction in each of them.

{\it Case 1:} $\{i,k\}\cap\{j,l\}\ne\emptyset$. In this case $\mu=\lm$.
Permuting the pairs $(i,k)$ and $(j,l)$ if necessary, we get the following cases.

{\it Case 1.a:} $k=j$. In that case, $F+\Tr^3_{i,j,l}(\u{\gamma})$ is 
supported on subexpressions $<\u{\gamma}$.

{\it Case 1.b:} $i=j$ and $k<l$. In that case, $F+\Tr^1_{i,k,l}(\u{\gamma})$ is 
supported on subexpressions $<\u{\gamma}$.

{\it Case 1.c:} $i<j$ and $k=l$. In that case, $F+\Tr^2_{i,j,l}(\u{\gamma})$ is 
supported on subexpressions $<\u{\gamma}$.

{\it Case 2:} $\{i,k\}\cap\{j,l\}=\emptyset$.
Permuting the pairs $(i,k)$ and $(j,l)$, if necessary, we get the following cases.

{\it Case 2.a:} $k<j$. In that case, $F+\Sq^1_{i,k,j,l}(\u{\gamma})$ is 
supported on subexpressions $<\u{\gamma}$.

{\it Case 2.b:} $i<j$ and $l<k$. In that case, $F+\Sq^2_{i,j,l,k}(\u{\gamma})$ is 
supported on subexpressions~${<}\u{\gamma}$.

{\it Case 2.c:} $i<j<k<l$. It was explained at the beginning of Section~\ref{Reducing_a_special_vertex} that $\lm\ne\mu$.
Thus we get a two-dimensional real space $\mathscr V=\R\lm\oplus\R\mu$.
Let $\D=\<\r_\lm,\r_\mu\>$. This a reflection subgroup of order $2n$, where $n=\ord(\r_\lm\r_\mu)\ge2$.

Let $\{A,B\}$ be the canonical simple system for $\D$. Applying the procedure of projection described in Section~\ref{Projection},
we get a gallery $\u{\Delta}=\pr_\D(\u{\Gamma})$ in $\mathscr V^*$ and
an increasing bijection $p:\{1,\ldots,|\u{\Delta}|\}\to\u{\Gamma}(\D)$.
We assume that $\u{\delta}\subset\u{t}$ for the corresponding expression $t$ in $A$ and $B$
and denote
$$
i'=p^{-1}(i), \quad j'=p^{-1}(j),\quad k'=p^{-1}(k), \quad l'=p^{-1}(l).
$$
Note that these indices are well-defined. Indeed, the $i$th wall of $\u{\Gamma}$
is $H_{\u{\gamma}^{\to i}}=H_{\lm}=H_{\t_{\lm}}$ and $\r_{\lm}\in\D$.
Thus $i\in\u{\Gamma}(\D)=\im p$. Similarly $j,k,l\in\im p$.

By Theorem~\ref{lemma:15},
there exists a morphism $(p,\phi):\u{t}\to\u{s}$ of positive cosign in $\Expr_\D(W,S)$ for
some map $\phi:\SubEx(\u{t})\to\SubEx(\u{s})$ such that $\phi(\u{\delta})=\u{\gamma}$.
We claim that $(i',k')$ and $(j',l')$ are special
$\mu$- and $\lm$-pairs respectively for $\u{\delta}$.
Indeed, as $(\u{\delta},\u{\gamma})$ is a $p$-pair of positive cosign, we get
$$
\u{\delta}^{\to i'}=\u{\gamma}^{\to p(i')}=\u{\gamma}^{\to i}=-\mu,
$$
Similarly $\u{\delta}^{\to j'}=-\lm$,
$\u{\delta}^{\to k'}=\mu$, $\u{\delta}^{\to l'}=\lm$.
Thus we have checked part~\ref{definition:cycb6:3:p:i} of Definition~\ref{definition:cycb6:3} for both pairs.
Let us check part~\ref{definition:cycb6:3:p:ii} for $(i',k')$. Suppose that there exists $h<i'$
such that $\u{\delta}^{\to h}=\mu$. Calculating as just above, we get
$$
\u{\gamma}^{\to p(h)}=\u{\delta}^{\to h}=\mu.
$$
As $p$ is increasing, $p(h)<p(i')=i$. This violates property~\ref{definition:cycb6:3:p:ii}
for the special pair $(i,k)$. The property~\ref{definition:cycb6:3:p:iii} can be proved similarly.
The proof for $(j',l')$ is also similar.

By Theorem~\ref{theorem:3}, there exists $\c\in\{\a,\b\}$, an index $u\in\{1,2\}$
and a morphism $(q,\psi):\u{r}\to\u{t}$ of positive cosign in the category $\Seq_\D$
such that the only edges with endpoint $\u{\delta}$ of a certain sum of cycles $\psi(\Cyc^u_\c(x,y))$
all supported on subexpressions $\le\u{\delta}$ are $\{\u{\delta},\f_{i',k'}\u{\delta}\}$
and $\{\u{\delta},\f_{j',l'}\u{\delta}\}$. 
Considering the composition
$$
\begin{tikzcd}
\u{r}\arrow{r}{(\psi,q)}&\u{t}\arrow{r}{(\phi,p)}&\u{s},
\end{tikzcd}
$$
which is a morphism in the category $\Seq_\D$ of positive cosign by Proposition~\ref{proposition:8},
we obtain a sum of cycles belonging to $\mathbf{Cyc}(\u{s},w)$ all supported on subexpressions $\le\u{\gamma}$
whose only edges with endpoint $\u{\gamma}$ are $\{\u{\gamma},\f_{i,k}\u{\gamma}\}$
and $\{\u{\gamma},\f_{j,l}\u{\gamma}\}$. Adding this sum to $F$, we obtain an even
subgraph supported on subexpressions $<\u{\gamma}$.
\end{proof}

\begin{proof}[Proof\! of\! Theorem\!~\ref{theorem:main}]\!\!
The result follows from Theorem~\ref{theorem:4}, Remark~\ref{rem:3} and Corollary~\ref{corollary:6}.
\end{proof}

\def\sep{\\[-6.8pt]}

\end{document}